\newcommand{\btimes}{\mathbin{\rotatebox[origin=c]{90}{$\Join$}}}
\DeclareMathOperator{\dis}{dist}
\newtheorem*{theorem*}{Theorem}
\providecommand{\Rd}{\mathbb{R}^d}
\providecommand{\ind}{\mathrm{index}}
\newcommand{\vfullpattern}[1]{%
  %% main pattern, level 0
  #1
  
  \vrepeatpatternthree{#1}
  
  %% maint pattern, level -1
  \vrepeatpatternbig{#1}
}
\newcommand\vrepeatpatternbig[1]{%
  \begin{scope}[shift={(0,-1)},scale=3]
    #1
  \end{scope}
  \begin{scope}[shift={(0,1)},scale=3]
    #1
  \end{scope}
}
\newcommand{\vrepeatpatternone}[1]{%
  \begin{scope}[shift={(0,-1/3)},scale=1/3]
    #1
  \end{scope}
  \begin{scope}[shift={(0,1/3)},scale=1/3]
    #1
  \end{scope}
}
\newcommand{\vrepeatpatterntwo}[1]{%
  \begin{scope}[shift={(0,-1/3)},scale=1/3]
    #1
    \vrepeatpatternone{#1}
  \end{scope}
  \begin{scope}[shift={(0,1/3)},scale=1/3]
    #1
    \vrepeatpatternone{#1}
  \end{scope}
}
\newcommand{\vrepeatpatternthree}[1]{%
  \begin{scope}[shift={(0,-1/3)},scale=1/3]
    #1
    \vrepeatpatterntwo{#1}
  \end{scope}
  \begin{scope}[shift={(0,1/3)},scale=1/3]
    #1
    \vrepeatpatterntwo{#1}
  \end{scope}
}
\begin{document}

\title{Nonlocal and mixed models with Lavrentiev gap}

\author{Anna  Balci}
\address{Anna~Balci, University Bielefeld, Universit\"atsstrasse 25, 33615
  Bielefeld, Germany.}
\email{akhripun@math.uni-bielefeld.de}
\subjclass[2000]{35R11; 47G20; 46E35}
\keywords{Nonlocal operator; double phase; Lavrentiev phenomenon}
\thanks{This  research is funded  by the Deutsche Forschungsgemeinschaft (DFG, German Research Foundation) - SFB 1283/2 2021 - 317210226.}
\begin{abstract}
 We present  a general framework for constructing examples on  Lavrentiev energy gap for nonlocal problems and  apply it to several nonlocal and mixed  models of double-phase type.
\end{abstract}

\maketitle

%\tableofcontents

\section{Introduction}

The Lavrentiev gap  is known to appear in the study of local problems of variational type. The minimum of the integral functional  taken over smooth functions may differ from the one taken over the associated energy space. This leads to significant challenges such as special kind of non-uniqueness for such problems, non-density of smooth functions,  lack of regularity.     Within this article we  introduce the procedure to construct  examples of an energy gap for nonlocal and mixed  problems.  
In particular we concentrate on several nonlocal and mixed models of double-phase type.  The energy is defined as the sum of two parts ("phases") ~$\mathcal{J}^s_p$ and~$\mathcal{J}^t_{q,a}$ which are connected through a possible degenerate coefficient~$a(\cdot,\cdot)$ in the energy~$\mathcal{J}^t_{q,a}$.  The strong part~$\mathcal{J}^t_{q,a}$ is only activated at the part where the weight~$a(\cdot,\cdot)$ does not vanish.  
% \begin{align}
%  \mathcal{J}(v) =\mathcal{J}^s_p(v)+\mathcal{J}^t_{q,a}(v),
% \end{align}
% where~$a(\cdot,\cdot):\setR^d\times \setR^d\to \setR$  is  a measurable, nonnegative weight. 
Depending on the concrete structure of the phases we distinguish the following cases 
\begin{align*}
     &\text{Local model \textbf{(I)}  :\,}\\&\quad  \mathcal{J}(v)= \int_\Omega \frac 1p \abs{\nabla v}^p\, dx+\int_\Omega \frac 1 q a(x)\abs{\nabla v}^q \, dx,\\
     &\text{Nonlocal-local model \textbf{(II)}:\,}\\&\quad  \mathcal{J}(v)= \int_{\setR^d} \int_{\setR^d} \frac 1 p \frac{\abs{v(x)-v(y)}^p}{\abs{x-y}^{d+sp}} dxdy+\int_\Omega \frac 1 q a(x)\abs{\nabla v}^q\, dx,\\
     &\text{Local-nonlocal model \textbf{(III)}:\,}\\&\quad  \mathcal{J}(v)= \int_\Omega \frac 1p \abs{\nabla v}^p\,dx+ \int_{\setR^d} \int_{\setR^d} \frac 1 qa(x,y) \frac{\abs{v(x)-v(y)}^q}{\abs{x-y}^{d+tq}} dxdy,\\    
     &\text{Nonlocal model}  \textbf{ (IV)}:\\ 
     &\quad  \mathcal{J}(v)= \int_{\setR^d} \int_{\setR^d} \frac 1 p \frac{\abs{v(x)-v(y)}^p}{\abs{x-y}^{d+sp}} dxdy+\int_{\setR^d} \int_{{\setR^d}} \frac 1 qa(x,y) \frac{\abs{v(x)-v(y)}^q}{\abs{x-y}^{d+tq}} dxdy.
\end{align*}
Here~$0<s\le t<1$,~$p,q\in (1,\infty)$ and~$a:\setR^d\times\setR^d\to\setR$ is a nonnegative coefficient.  When~$a(x,y)=0$, the double phase models reduce to the~$p$-Laplacian for the models~\textbf{I} and~\textbf{III}, and to the~$s$-fractional~$p$-Laplacian for the models~\textbf{II} and~\textbf{IV}.  

\subsection{Known models}
Most classical and well-studied is the local model \textbf{I}.   The  integrand~$$\Phi(x,t)=\frac 1 p \abs{t}^p+a(x)\frac 1 q \abs{t}^q$$  defines the corresponding generalized Sobolev-Orlicz spaces~$W^{1,\Phi(\cdot)}(\Omega)$ (the natural energy space for $\mathcal{J}$ for  model \textbf{I}) . The Lavrentiev gap in this case is the inequality  
\begin{equation}\label{Lavr}
 \inf  \mathcal{J}(W^{1,\Phi(\cdot)}(\Omega)) < \inf  \mathcal{J}(C_0^\infty(\Omega)).
\end{equation}

 A similar phenomenon for boundary value problems can be expressed as the inequality
\begin{equation}\label{Lavr_bvp}
 \inf  \mathcal{J}\big((u_D+W^{1,\Phi(\cdot)}_0(\Omega)\big) < \inf  \mathcal{J}\big(u_D+C_0^\infty(\Omega)\big)
\end{equation}
for some $u_D\in C^1(\overline{\Omega})$.

A closely related problem is density of smooth functions in the natural energy space of the functional. Denote the closure of smooth forms from $W^{1,\Phi(\cdot)}(\Omega)$ in this space by $H^{1,\Phi(\cdot)}(\Omega)$. If any function from the domain of $\mathcal{J}$ can be approximated by smooth functions with energy convergence (equivalently, if $H^{1,\Phi(\cdot)}(\Omega)=W^{1,\Phi(\cdot)}(\Omega)$, which is abbreviated to $H=W$) then the Lavrentiev gap is obviously absent. In the autonomous case, when  the integrand $\Phi=\Phi(t)$ is an Orlicz function independent of $x$, the Lavrentiev phenomenon is absent ($H=W$). When~$\Phi:=\Phi(x,t)$ and~$d=1$ it was first  observed in \cite{Lav27}; for more general integrands see also recent results in \cite{Mar23}. 

In 1995 Zhikov~\cite[Example~3.1]{Zhi95} considered local model~\textbf{I} in the context of Lavrentiev phenomenon. He constructed with checkerboard
setup a weight~$a \in C^{0,\alpha}(\overline{\Omega})$ with
$\alpha=1$, $\Omega= (-1,1)^2$ and $p < 2 < 2+\alpha =3 < q$ such that
the Lavrentiev gap occurs. This example was generalized by 
~\textcite{EspLeoMin04} to the case of higher dimensions and less
regular weights, i.e. $\alpha \in (0,1]$. In particular,
for~$\Omega=(-1,1)^d$, they constructed a
weight~$a \in C^{0,\alpha}(\overline{\Omega})$ and exponents
$1 < p < d < d+ \alpha < q$ such that the Lavrentiev gap occurs.   In both examples by Zhikov and
Esposito-Leonetti-Mingione the two exponents~$p$ and~$q$ cross the
dimensional threshold~$d$. \textcite{FonMalMin04} studied fractal singular sets for such problems.  The general procedure to construct examples on Lavrentiev gap for the local case without any dimensional restrictions was presented by \textcite{BalDieSur20} and was based on  fractal contact sets. For the so-called borderline case of double-phase the results on Lavrentiev gap are obtained by \textcite{BalSur21}. Recently this procedure was generalised to the case of the variational problems with differential forms by \textcite{BalSur23}.

Let us mention that  Lavrentiev gap could appear for vectorial autonomous problems, where integrand only depends on the gradient of the function.  Notable examples of such scenarios emerge in the field of nonlinear elasticity and were  constructed by \textcite{FHM03} and more very recently modified by~\textcite{Almi2023ANE}.

Challenges also arise in the numerical approximation of problems exhibiting the Lavrentiev phenomenon. Discussions on these challenges for non-autonomous functionals can be found in the works of \textcite{BalOrtSto22} and \textcite{BalKal23}. 

Lavrentiev gap  for the double phase potential can also be seen as a
lack of higher regularity, see ~\textcite{Mar89} for the first
example in this direction. In fact, local minimizers of~$\mathcal{F}$
need not be~$W^{1,q}$-functions unless~$a$, $p$ and~$q$ satisfy
certain assumptions. In fact, if $\frac{q}{p} \leq 1+\frac \alpha{d}$
and $a \in C^{0,\alpha}$, then minimizers of~$\mathcal{F}$ are
automatically in~$W^{1,q}$, see \textcite{ColMin15}. Moreover, bounded
minimizers of~$\mathcal{F}$ are automatically~$W^{1,q}$
if~$a \in C^{0,\alpha}$ and~$q \leq p+\alpha$,
see~\textcite{BarColMin18}. If the minimizer is from~$C^{0,\gamma}$, then
the requirement can be relaxed to $q \leq p + \frac{\alpha}{1-\gamma}$, see 
\textcite[Theorem~1.4]{BarColMin18}. 

The different question is related to the density of smooth functions in the corresponding generalized Sobolev-Orlicz spaces. The sufficient conditions for density  for double phase local model were formulated in \cite{Zhi95}[Lemma 2.1] for the case of Lipschitz continuous weight~$a(x)$; the approach based on boundedness of convolution for different non-autonomous models is contained in the book by~\textcite{HarHas19}, the condition for the density in this case is~$\frac{p}{q}\le 1+\frac{\alpha}{d}$. 
The questions of  density of smooth functions was recently studied by \textcite{BulGwiSkr22}, where they  established an improved range of exponents $p$ and  $q$ for which the Lavrentiev phenomenon does not occur;   see also work  by  \textcite{BCDM23} with a refinement of this method for~$\alpha>1$.  Recently, \textcite{DeFilMin23} obtained regularity results  for double phase problems at nearly linear growth.

In a nonlocal context, \textcite[Section 6]{BBCK09} employed a class of symmetric jump processes to create bounded harmonic functions that lack continuity. A careful analysis of the  geometrical configuration of this example shows that it  closely resembles the construction of a one-saddle point. In this particular model, the generative integrand is linear; however, the presence of a singular kernel introduces an intriguing phenomenon. This phenomenon prevents the Harnack inequality from conferring regularity properties.  The study of non-local nonlinear models goes back to \textcite{KuuMinSi15_1, KuuMinSir15} for the fractional~$p$-Laplacian. The nonlocal double-phase \textbf{IV} was introduced by \textcite{DeFPal19}, where they studied Hölder regularity of minimizers under the assumptions~$q\ge p$.   Regularity estimates for local minimizers of nonlocal functionals with non-standard growth of (p,q)-type were studied by  \textcite{ChaMinWei22}; Self-improving inequalities for bounded weak solutions to nonlocal double phase equations were obtained by \textcite{ScoMen22}.  In paper by  \textcite{BOS22} proved  local boundedness and Hölder continuity for weak solutions to nonlocal double phase problems for~$0<s\le t<1<p\le q<\infty$ and~$p\le d$; \textcite{ChaKimWei23} obtained a full Harnack inequality for local minimizers  and weak solutions  to nonlocal problems with non-standard growth. Local {H}\"{o}lder regularity for nonlocal equations with variable powers was studied by~\textcite{Ok23}.   Let us also mention recent significant  contributions to the regularity theory for nonlocal models with coefficients by \textcite{Now23,Now23_1}, \textcite{KuuNowSir23},   \textcite{DieNow23} for Calder\'on-Zygmund estimates for weighted nonlocal~$p$-Laplacian. 

The theory of function spaces connected to the nonlocal problems is based on the theory of  classical Orlicz spaces, see for example \textcite[Section 3]{ACPS21}  for fractional Sobolev-Orlicz spaces.

Mixed problems of~$p,s$-Laplacian type were studied by~\textcite{DeFilMin22a}. Mixed nonlocal-local  problems of the type \textbf{II} with the     were considered by \textcite{BuyLeeSong23}, where they obtained Hölder regularity results for such problems under the assumption~$s\in (0,1)$ and~$1<p\le q$.

To our knowledge the mixed local-nonlocal problem of the type~\textbf{III}, where the~$J^t_{q,a}$ phase is nonlocal, was not  studied till now.

\subsection{New models and main results} In this paper we show that nonlocal and mixed  problems of the type~\textbf{I}-\textbf{IV} admit energy gaps. We present the general method of constructing such examples for mixed and nonlocal models. We introduce the mixed model~\textbf{III} and present the sharp bounds for the energy gap to occur.    Our method extends and strengthens  our previous local  approach for the model \textbf{I} from \textcite{BalDieSur20} and \textcite{BalSur21}.  
This  contains a new approach based on estimates for operator of Riesz potential type for nonlocal problems, which makes it possible to avoid  the duality theory, which in the general case is not available for the nonlocal case. 

Now we state the main results of this paper. We work with three models: nonlocal model \textbf{IV}, mixed models \textbf{II}-\textbf{III}. In each of these scenarios, we construct illustrative examples of the Lavrentiev gap. Importantly, the methodology we propose is not confined solely to these models and can be extended to a wider range of problem types. The main result if formulated in the following theorem.
\begin{theorem}[Energy gap]\label{theoremA}
 Let~$\Omega=(-1,1)^d$ with~$d\ge 2$,~be such that~$t-\frac{q}{d}>s-\frac{p}{d}$ and~$t>s$.    Then there exists a weight~$a(\cdot,\cdot):\setR^d\times \setR^d\to [0,\infty)$ which is measurable, symmetric, bounded  and Hölder continuous function in $C^\alpha\cap L^\infty (\setR^d\times\setR^d)$ and  the boundary value $u_D\in C^\infty (\overline{\Omega})$ such that 
 \begin{align*}
  \inf  \mathcal{J}\big(W_{u_D}(\Omega)\big) < \inf  \mathcal{J}\big(H_{u_D}(\Omega)\big)
 \end{align*}
 holds.
\end{theorem}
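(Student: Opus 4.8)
The plan is to construct an explicit function $u_D$ and an explicit weight $a(\cdot,\cdot)$ following the "fractal contact set" / checkerboard philosophy from \textcite{BalDieSur20, BalSur21}, adapted to the scaling of the fractional double-phase energy. The scaling condition $t-\frac qd > s-\frac pd$ (equivalently $\frac{q}{p}$ large relative to the ratio of orders and the dimension) is precisely what makes the $\mathcal J^t_{q,a}$-phase genuinely stronger than the $\mathcal J^s_p$-phase, so that a function cheap in the first energy can be forced expensive in the second. The construction has two competing requirements: (i) the boundary datum $u_D$ together with an admissible competitor must keep $\mathcal J^s_p$ finite but make $\mathcal J^t_{q,a}$ infinite for \emph{every} $W_{u_D}$-representative that agrees with smooth ones up to energy — this gives $\inf \mathcal J(H_{u_D}) = +\infty$; (ii) there must exist \emph{some} competitor in $W_{u_D}$ with both phases finite — this gives $\inf\mathcal J(W_{u_D}) < +\infty$. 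The weight $a$ is designed to vanish on the "jump region" exploited by the cheap competitor but to be bounded below on a region that any $H_{u_D}$-function is forced to traverse.

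First I would set up the one-dimensional-type building block: take $u_D$ to be (a smooth version of) a function that, along a distinguished coordinate direction, looks like a fractal staircase whose jumps occur across a Cantor-type set $C\subset\Omega$ of a carefully tuned Hausdorff dimension. The self-similar $\tikz$-style pattern encoded by the \texttt{\textbackslash fullpattern}/\texttt{\textbackslash vfullpattern} macros in the preamble is exactly this iterated checkerboard; I would make the generations of this pattern $\{Q_n\}$ have side length $\sim 3^{-n}$ (or more generally $\sigma^{-n}$) and let $a(x,y)$ be bounded below by a positive constant on pairs $(x,y)$ that straddle a contact interface at scale $n$, and zero elsewhere. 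The Hölder exponent $\alpha$ of $a$ is then dictated by how fast $a$ is allowed to grow as one approaches the contact set; the admissible range of $\alpha$ is what the hypothesis $t-\frac qd > s -\frac pd$ leaves room for.

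Second, for the \emph{upper bound} on $\inf\mathcal J(W_{u_D})$, I would exhibit the explicit competitor $v_*$ that is locally constant off the contact set, with all its "variation" concentrated on the zero set of $a$: then $\mathcal J^t_{q,a}(v_*)<\infty$ trivially (the weight kills the bad part) and one estimates $\mathcal J^s_p(v_*)<\infty$ by summing the self-similar contributions over generations — a geometric series that converges precisely under the $s,p$-side of the balance. For the \emph{lower bound} / energy gap, I would argue by contradiction: if $v\in H_{u_D}$ with $\mathcal J(v)<\infty$, then finiteness of $\mathcal J^s_p(v)$ plus the $H=$closure property forces $v$ to have a trace-type continuity across each contact interface (this is where the nonlocal Riesz-potential estimate advertised in the introduction replaces the usual duality/capacity argument: one bounds the oscillation of $v$ across a scale-$n$ interface by a fractional-potential norm controlled by $\mathcal J^s_p$). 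Summing the forced oscillations against the lower bound on $a$ on the contact region then makes $\mathcal J^t_{q,a}(v)$ diverge — contradiction. Hence no $H_{u_D}$-function has finite energy, and the strict inequality follows.

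\textbf{The main obstacle} I expect is the lower bound step: transferring, in the purely nonlocal regime, the "a function with controlled $\mathcal J^s_p$-energy cannot jump across the contact set, yet $u_D$ does jump" argument without the duality pairing that was available in \textcite{BalDieSur20}. Concretely, one must prove a quantitative trace/oscillation estimate of the form: the jump of (the good representative of) $v$ across a contact face of generation $n$ is controlled by $\big(\text{local } \mathcal J^s_p\text{-energy near that face}\big)^{1/p}$ times a scale factor, with the scaling exponent matching so that summation over $n$ forces divergence of the weighted $q$-energy exactly when $t-\frac qd>s-\frac pd$. Establishing this estimate — essentially a Morrey/fractional-Sobolev embedding on annular self-similar pieces, made uniform across generations by the Riesz-potential-type operator bound — and checking that the arithmetic of the exponents closes on the stated sharp threshold is the technical heart of the argument; the four cases \textbf{I}–\textbf{IV} should then follow from this core estimate by routine modifications (replacing $\nabla v$ by the appropriate Gagliardo seminorm in whichever phase is local).
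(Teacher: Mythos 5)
Your high-level construction — a Cantor-type fractal contact set, a competitor whose jumps live where $a$ vanishes, a weight $a$ that penalizes any continuous function forced to cross the barrier, and a nonlocal Riesz-potential oscillation estimate to replace the duality pairing of \cite{BalDieSur20} — is exactly the paper's strategy. But your closing argument has a genuine gap: you assert that every $v\in H_{u_D}$ has $\mathcal J^t_{q,a}(v)=+\infty$, hence $\inf\mathcal J(H_{u_D})=+\infty$. This cannot be true. The boundary datum $u_D$ is smooth, bounded and compactly supported, so $u_D\in H_{u_D}$; since the theorem requires $a\in L^\infty$, the weighted Gagliardo seminorm of $u_D$ is finite and $\mathcal J(u_D)<\infty$. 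Indeed, the paper's own existence theorem produces an $H$-minimizer $\omega_H$ with finite energy. Your ``sum the forced oscillations to force divergence'' step cannot produce infinity when applied to a fixed smooth function — what it actually produces is a \emph{finite, positive} lower bound.

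The missing ingredient is the weight-rescaling trick. The paper shows three things: (a) $\mathcal J^s_p(u_C)<\infty$; (b) $\mathcal J^t_{q,a}(u_C)=0$ because the competitor's increments are supported in the zero set of $a$; (c) $\mathcal J^t_{q,a}(v)\ge c_0>0$ for every continuous $v$ agreeing with $u_D$ outside $\Omega$. Now replace $a$ by $\nu a$ (still $C^\alpha\cap L^\infty$). Property (b) means $\mathcal J(u_C)=\mathcal J^s_p(u_C)$ is independent of $\nu$, while property (c) gives $\mathcal J^t_{q,\nu a}(v)\ge \nu c_0$ for $v\in H_{u_D}$. Choosing $\nu$ large enough that $\nu c_0>\mathcal J^s_p(u_C)$ yields $\inf\mathcal J(W_{u_D})\le \mathcal J(u_C)<\nu c_0\le \inf\mathcal J(H_{u_D})$, which is the claimed strict inequality. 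Without this scaling step, the estimate from your Riesz-potential lemma is too weak to produce a gap, and no amount of tuning the Cantor dimension or the Hölder exponent will change the fact that smooth compactly supported functions have finite double-phase energy when $a$ is bounded.
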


Function spaces~$W_{u_D}(\Omega)$ and $H_{u_D}(\Omega)$ are defined later in Section~\ref{sec:energy}.  
\begin{remark}
 Let us mention, that the Theorem~\ref{theoremA} gives the example of the energy gap for the case~$q<p$ for the model~\textbf{IV}. In the purely local context this effect does not appear, since if~$q<p$ the situation is similar to the~$p$-Dirichlet energy, where Lavrentiev gap does not occur.  
\end{remark}
\bigskip
We collect all sets of parameters witch ensures the appearance of energy gap  for local, nonlocal and mixed problems in  the following table:
\begin{table}[!htbp]
 \begin{center}
\begin{tabular}{ c c c }
 model & subcritical case~$s-\frac dp\le 0$ & supercritical case~$s-\frac d p>0$  \\ 
 \bigskip
 \textbf{I} & $q>p+\alpha$ &    $\frac{q}{q-1}\bigg(1-\frac{d+\alpha}{q} \bigg) >\frac{p}{p-1}\bigg(1-\frac{d}{p}\bigg)$\\
 \bigskip
 \textbf{II} &$q>sp+\alpha$ & $\frac{q}{q-1}\bigg(1-\frac{d+\alpha}{q} \bigg)>\frac{p}{p-1}\bigg(s-\frac{d}{p}\bigg)$ \\ 
 \bigskip
 \textbf{III} &  $tq>s+\alpha$ & $\frac{q}{q-1}\bigg(t-\frac{d+\alpha}{q} \bigg)>\frac{p}{p-1}\bigg(1-\frac{d}{p}\bigg)$ \\ 
 \bigskip
 \textbf{IV} & $tq>sp+\alpha$ & $\frac{q}{q-1}\bigg(t-\frac{d+\alpha}{q} \bigg)>\frac{p}{p-1}\bigg(s-\frac{d}{p}\bigg)$ \\   
 \end{tabular}
\end{center}
\caption{Energy gap parameters for different models}
\end{table}

Note, that the condition for the supercritical case and  local model~\textbf{I} here are written in the form to be easy compared with mixed and nonlocal case. This condition the same as  the original one for the local double phase,  formulated in \cite{BalDieSur20}[Section 4.2]  as  $$q>p+\alpha \frac{p-1}{d-1}.$$

 Let us mention  that for the supercritical case  no higher regularity results are knows till now neither for the nonlocal no for mixed problems. Nevertheless our conditions for the supercritical case  indicate the possible bounds for the model parameters in future investigations, particularly in the density of smooth functions and regularity aspects.

 For both subcritical and supercritical scenarios in the mixed and nonlocal models \textbf{II}-\textbf{IV}, the conditions we derive for the Lavrentiev gap \textbf{do not necessitate}~$q>p$. Conversely, in the local case of model~\textbf{I}, such a condition becomes imperative to substantiate the "double-phase" structure and the existence of the Lavrentiev gap. Despite the absence of regularity theory for the case~$q<p$, the condition~
 $$
 t-\frac{d}{q}>s-\frac d p
 $$
 ensures the embedding~$W^{t,q}\embedding W^{s,p}$, thereby preserving the double-phase structure.

The proof of Theorem~\ref{theoremA} relies on the construction and analysis of a special function, which is referred to as the "competitor" and denoted by $u_C=u(x, y)$. This function is designed to possess certain properties and belongs to the energy space of the double-phase problem, which ensures that it has finite energy. However, for the subcritical case~$\ind(W^{s,p})<0$ the competitor $u_C$ has  discontinuities across the fractal contact set (we  denote the smooth version of it as~$u_D$ to be a boundary value).  The transition weight function   $a(x, y)$  is constructed in a specific manner to suit the requirements of the problem and its parameters. In particular it behaves like~$\abs{x_d}^\alpha+\abs{y_d}^\alpha$ in the upper and lower regions of the domain~$\Omega$, and it vanishing in the other regions close to the fractal contact set. In addition we construct a function~$u_D$ that agrees with the competitor~$u_C$ on~$\Omega^\complement$ but is~$C^\infty$. This function~$u_D$ will be used later as the boundary value. 

The overall strategy of the proof involves showing, under certain additional assumptions on the parameters of the model, that are different for the cases where $\ind (W^{s,p}) > 0$ and $\ind (W^{s,p}) < 0$, that the following  desired properties are achieved:
\begin{enumerate}
%  \item the competitor~$u_D$ belongs to the energy space~$W$, that is~$J_{p}^s(u)<\infty$.
 \item ~$\mathcal{J}^s_p(u_c)<\infty$ and ~$J_{q,a}^t(u_C)=0$ (the competitor  does not see the ~$J_{q,a}^t$ phase). 
 \item for any smooth function~$v\in C^0(\overline\Omega)$ with~$v=u_D$ on~$ \Omega^{\complement}$~the second phase~$J_{q,a}^t$ is bounded from below:\quad $$J_{q,a}^t(v)\ge \const>0.$$ 
\end{enumerate}

Combination of these properties gives an example of energy gap.

\subsection{Outline}
 In Section~\ref{sec:energy} we recall some basic definitions, introduce corresponding fractional Sobolev-Orlicz  energy spaces and existence results. In Section \ref{sec:Cantor} we describe Cantor barriers.  In Section \ref{sec:sub} we describe the general framework for the construction of examples for the subcritical case~$\ind(W^{s,p})<0$ and in  Section \ref{sec:super} for the supercritical case $\ind(W^{s,p})>0$.

\section{Energy and variational formulation}\label{sec:energy}

In this section we define the energies and corresponding energy spaces. 

We assume that~$\Omega \subset \Rd$ is a Lipschitz domain of finite
measure.  Later in our
applications we will only use~$\Omega= (-1,1)^d$. By~$\lambda \Omega$ we denote ~$\lambda$-times enlarged~$\Omega$.  By $B^m_r(x)$ we denote the ball
of~$\setR^m$ with radius~$r$ and center~$x$. We denote
by~$\indicator_A$ the indicator function of the
set~$A$. By~$L^p(\Omega)$ and $W^{s,p}(\Omega)$ we denote the usual
Lebesgue and fractional Sobolev  spaces. Moreover, let~$W^{s,p}_0(\Omega)$ be the
Sobolev space with zero boundary values. The spaces we deal with could be seen as generalisations of the fractional Sobolev spaces~$W^{s,p}(\Omega)$.  Similar to Sobolev-Orlicz spaces our spaces will be defined by a modular, in most cases a non-local one. 
We
use~$c>0$ for a generic constants whose value may change from line to
line but does not depend on critical parameters. We also abbreviate
$f \lesssim g$ for $f \leq c\, g$.

To study minimizers of the problems~\textbf{II}-\textbf{IV} we need to introduce relevant function spaces. For simplicity we define the energy spaces for the nonlocal model~\textbf{IV}. The mixed energies~\textbf{II} and ~\textbf{III} could be treated in a similar way. We assume that transition weight~$a(x,y)$  vanishes outside of~$3\Omega$. In our examples later  we take a particular competitor  function~$u_D\in L^\infty (\setR^d)$ as a boundary value such as~$u_D$ vanishes outside~$3\Omega$.  In general it could happen, that the boundary  function has infinite  energy~$\mathcal{J}(v)$. 
 The truncation outside~$3\Omega$ allows us  to avoid the renormalized  energies approach, which was presented in  \textcite{CMY17};~\textcite{DPV19}.   

\begin{definition}[Energy spaces~$W_0$,~$W_{u_D}$]\label{def:W} 
The generalized fractional  Sobolev space $W_0(\Omega)$,
~$W_{u_D}(\Omega)$ with   $p,q\in[1,\infty)$ and~$s,t\in (0,1)$ is the space of all measurable functions such as  
\begin{align*}
%W&:=\set{v\in L^1_{\loc}(\setR^d):\, \mathcal{J}(v)<\infty},\\
W_0(\Omega)&:=\set{v\in L^1_{\loc}(\setR^d):\, \mathcal{J}(v)<\infty\text{ with } v|_{ \Omega^\complement}=0},\\
W_{u_D}(\Omega)&:=u_D+W_0(\Omega),
\end{align*}
where~$u_D\in L^\infty(\setR^d)$ and~$u_D|_{(3\Omega)^\complement}=0$.
\end{definition}

We abbreviate for simplicity~$W_0=W_0(\Omega)$. Now~$W_0$ is a Banach spaces equipped with the norm

\begin{align*}
 \norm{v}_{\mathcal{J}}:=\inf\biggset{\lambda>0:\mathcal{J}\bigg(\frac{v}{\lambda}\bigg) \le 1}.
\end{align*}
In general smooth functions are not dense in~$W$, so the following spaces are defined by closure of smooth functions in the energy norm

\begin{definition}[Energy spaces~$H_0$,~$H_{u_D}$]\label{def:H} 
The fractional Sobolev space~$H_0$ is defined as the closure of~$C_0^\infty$-functions in ~$W$:
\begin{align*}
H_0&:=\overline{C_0^\infty(\setR^d)}^{\norm{\cdot}_{\mathcal{J}}},\\
H_{u_D}&:={u}_D+H_0(\Omega),
\end{align*}
where~$u_C\in L^\infty(\setR^d)$ and~${u}_D=\eta u_C$, with cut-off function $\eta \in C^\infty_0(\Omega)$,  $\indicator_{(-\frac46,\frac46)^d} \leq \eta \leq
  \indicator_{(-\frac56,\frac56)^d}$ and $\norm{\nabla \eta}_\infty \leq
  c$.
\end{definition}

%  For~$u_D\in L^\infty(\Omega)$,  $v \in V = u_D + w$ with~$w\in W_0^{s,p}$ and~$\support w\subset \overline{\Omega}$~ we define the renormalized energy as 
% \begin{align}
%   \label{eq:J-renormalized}
%   \mathcal{J}_D(v)
%   &:= \int_{\RRd} \int_{\RRd} k(x,y)  \big(\abs{v(y)-v(x)}^p - \abs{u_D(y)-u_D(x)}^p\big) \,dy\,dx,
% \end{align}
% where the kernel~$k(x,y)=\abs{x-y}^{-p-sp-d}$. It has the same first variation as the energy~$\mathcal{J}(v)$ without the second term with~$u_D$.
Note that~$H_{u_D}$ is a closed affine subspace of~$W_0(3\Omega)$.
\begin{lemma}
For~$u_D$,~$W$,~$W_{u_D}$ from Definition~\ref{def:W} there hold
 \begin{align*}
    W_{u_D}(\Omega) \subset W_0(3\Omega).
 \end{align*}

\end{lemma}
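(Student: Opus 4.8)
The plan is to verify directly that an arbitrary element of $W_{u_D}(\Omega)$ satisfies the two defining requirements of $W_0(3\Omega)$: vanishing on $(3\Omega)^\complement$ and having finite energy $\mathcal{J}$. So I would fix $w\in W_{u_D}(\Omega)$ and, by Definition~\ref{def:W}, write $w=u_D+v$ with $v\in W_0(\Omega)$; in particular $\mathcal{J}(v)<\infty$ and $v\equiv 0$ on $\Omega^\complement$, while $u_D\in L^\infty(\setR^d)$ with $u_D\equiv 0$ on $(3\Omega)^\complement$.

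The support condition is immediate: since $\Omega\subset 3\Omega$ we have $(3\Omega)^\complement\subset\Omega^\complement$, so $v|_{(3\Omega)^\complement}=0$, and together with $u_D|_{(3\Omega)^\complement}=0$ this gives $w|_{(3\Omega)^\complement}=0$. Local integrability is equally easy, since $u_D\in L^\infty(\setR^d)\subset L^1_{\loc}(\setR^d)$ and $v\in L^1_{\loc}(\setR^d)$. The only step with content is the bound $\mathcal{J}(w)<\infty$. Here I would exploit that both phases of $\mathcal{J}$ have power-type integrands: applying the elementary inequality $\abs{a+b}^{r}\le 2^{r-1}(\abs{a}^{r}+\abs{b}^{r})$, valid for $r\ge 1$, pointwise to the difference quotients $(u_D+v)(x)-(u_D+v)(y)$ inside the double integrals defining $\mathcal{J}^s_p$ and $\mathcal{J}^t_{q,a}$ yields the quasi-triangle inequality
\begin{align*}
 \mathcal{J}(f+g)\le 2^{\max\{p,q\}-1}\bigl(\mathcal{J}(f)+\mathcal{J}(g)\bigr).
\end{align*}
Taking $f=u_D$ and $g=v$, and using $\mathcal{J}(v)<\infty$ together with $\mathcal{J}(u_D)<\infty$, gives $\mathcal{J}(w)<\infty$, hence $w\in W_0(3\Omega)$. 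Since $w$ was arbitrary this proves the inclusion, and the same computation shows in passing that $W_{u_D}(\Omega)$ is an affine subspace of $W_0(3\Omega)$, namely the translate by $u_D$ of the linear subspace $W_0(\Omega)\subseteq W_0(3\Omega)$.

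The one point that needs care is the finiteness $\mathcal{J}(u_D)<\infty$: for a merely bounded, compactly supported function the fractional $\mathcal{J}^s_p$-energy can fail to be finite (already for $\indicator_{[0,1]}$ when $sp\ge 1$), so one must rely either on this being part of the standing assumptions on $u_D$ in Definition~\ref{def:W}, or—in the concrete examples—on the additional regularity of $u_D=\eta u_C$, where $\eta$ is a smooth cut-off and the competitor $u_C$ satisfies $\mathcal{J}^s_p(u_C)<\infty$; the support of $u_D$ in the bounded set $3\Omega$ then keeps the nonlocal tails of both phases integrable. I expect this to be the main (and essentially only) obstacle, the remaining steps being bookkeeping.
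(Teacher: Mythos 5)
Your argument is correct in substance and takes a cleaner route than the paper's own proof, while isolating the one real dependency. The paper's proof splits the double integral defining $\mathcal{J}$ into regions according to where $x$ and $y$ lie (both outside $\Omega$, both in $2\Omega$, one in $\Omega$ and one far away) and estimates each piece separately; it is quite terse, and as written the first case only gives zero contribution if $\Omega^\complement$ is read as $(3\Omega)^\complement$. Your approach instead uses the pointwise bound $\abs{a+b}^r\le 2^{r-1}(\abs{a}^r+\abs{b}^r)$, valid since $p,q\ge 1$, applied separately inside $\mathcal{J}^s_p$ and $\mathcal{J}^t_{q,a}$ to get the global quasi-triangle inequality $\mathcal{J}(f+g)\le 2^{\max\{p,q\}-1}(\mathcal{J}(f)+\mathcal{J}(g))$; this dispenses with the regional bookkeeping in one stroke and makes completely explicit that the whole lemma hinges on $\mathcal{J}(u_D)<\infty$.

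That last point is worth dwelling on, because you have correctly spotted the same tacit assumption the paper leans on. Definition~\ref{def:W} only requires $u_D\in L^\infty(\setR^d)$ with $u_D|_{(3\Omega)^\complement}=0$, which by itself does not yield $\mathcal{J}^s_p(u_D)<\infty$ (as your $\indicator_{[0,1]}$ example shows when $sp\ge 1$). The paper's proof invokes ``$v,u_D\in W$'' in passing, which is exactly the assertion that $u_D$ has finite energy, but this is never proved at that point; it is only established later, in the concrete constructions, via Lemma~\ref{lem:Lavr}\ref{i:a} and Lemma~\ref{lem:com_super} for $u_C$, from which $u_D=\eta u_C$ inherits finite energy since $\eta$ is a smooth cutoff supported in $\Omega$. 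So both the paper's case-splitting and your quasi-triangle argument are really contingent on the standing hypothesis $\mathcal{J}(u_D)<\infty$, which should be read into Definition~\ref{def:W} (or into the lemma's hypotheses) for the statement to be literally true; your proposal has the advantage of surfacing this dependency cleanly rather than burying it in a case analysis.
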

\begin{proof}
    Let~$v=u_D+w$, where~$w\in W_0(\Omega)$.\\
    If~$x,y\in \Omega^\complement$, then~$\mathcal{J}(v)=0$ and the proof is complete.\\
    If~$x,y\in 2\Omega$, then since~$v, u_D\in W$ and $\mathcal{J}(v)<\infty$.\\
     Let~$x\in \Omega$ and~$y\in (2B)^\complement$, then $\mathcal{J}(v)<\infty$ since~$u_D=0$ outside of~$3\Omega$.
\end{proof}

\noindent ({\it W-minimization})  Minimize $\mathcal{J}$ over the set $W_{u_D}$ from Definition~\ref{def:W}
\begin{align}\label{eq:var1}
 \mathcal{J}(v)  \rightarrow \min, \quad v\in W_{u_D}.
 \end{align}
\\
\noindent ({\it H-minimization}) Minimize $\mathcal{J}$ over the set $H_{u_D}$ from Definition~\ref{def:H}

\begin{align}\label{eq:var2}
 \mathcal{J}(v)  \rightarrow \min, \quad v\in H_{u_D}.
 \end{align}

\begin{theorem}[Existence of~$W$ and~$H$-solutions]
 The variational problems \eqref{eq:var1} and \eqref{eq:var2} have a unique minimizers: $$\omega_W \in W\text{ and  } \omega_H \in H.$$ \end{theorem}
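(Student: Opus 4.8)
The plan is to run the direct method of the calculus of variations, using that the energy $\mathcal{J}$ is convex, coercive on the affine set under consideration, and lower semicontinuous, and to get uniqueness from strict convexity. Since $t\mapsto\frac1p\abs{t}^p$ and $t\mapsto\frac1q\abs{t}^q$ are convex and $a\ge 0$ is fixed, $\mathcal{J}$ is convex; as $t\mapsto\frac1p\abs{t}^p$ is strictly convex for $p>1$, it is in fact strictly convex on the affine sets $W_{u_D}$ and $H_{u_D}$. Indeed, if $v_0\neq v_1$ lie in one of these sets, then $v_0-v_1$ belongs to the vector space $W_0$ (resp.\ $H_0$) and is not a.e.\ constant (nonzero constants belong to neither space), so there is a level $\ell$ with $\{v_0-v_1<\ell\}$ and $\{v_0-v_1>\ell\}$ both of positive measure; on the product of these sets $v_0(x)-v_0(y)\neq v_1(x)-v_1(y)$, and the strict convexity of $\tau\mapsto\abs{\tau}^p$ there (together with convexity everywhere and convexity of the second phase) makes $\mathcal{J}$ strictly convex along $[v_0,v_1]$. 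Hence a convex combination of two distinct minimizers has strictly smaller energy, which yields uniqueness once existence is established. Also $\inf\mathcal{J}(W_{u_D})$ and $\inf\mathcal{J}(H_{u_D})$ are finite as soon as $\mathcal{J}(u_D)<\infty$, e.g.\ for the smooth, compactly supported boundary datum of Theorem~\ref{theoremA}, since $u_D\in H_{u_D}\subset W_{u_D}$.

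For existence I would first check coercivity. Writing $v=u_D+w\in W_{u_D}$ with $w\in W_0$ vanishing outside the bounded set $\Omega$, the fractional Poincaré inequality gives $\norm{w}_{W^{s,p}}^p\lesssim\mathcal{J}^s_p(w)\lesssim\mathcal{J}^s_p(v)+\mathcal{J}^s_p(u_D)$. Thus a minimizing sequence $(v_k)$ yields $w_k:=v_k-u_D$ bounded in $W^{s,p}$, so by reflexivity ($p>1$) and the compact embedding $W^{s,p}\hookrightarrow L^p$ on bounded domains a subsequence satisfies $w_k\to w$ in $L^p$ and a.e.; hence $v_k\to v:=u_D+w$ a.e.\ with $v=u_D$ on $\Omega^\complement$. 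The integrands of $\mathcal{J}^s_p$ and $\mathcal{J}^t_{q,a}$ are nonnegative and converge a.e.\ on $\setR^d\times\setR^d$ to those of $v$ (here the fixed measurability of $a$ enters), so Fatou's lemma gives $\mathcal{J}(v)\le\liminf_k\mathcal{J}(v_k)=\inf\mathcal{J}(W_{u_D})$; since $v\in W_{u_D}$, it is the minimizer $\omega_W$. The same computation shows $\mathcal{J}$ is lower semicontinuous along a.e.\ convergent, and in particular along strongly convergent, sequences.

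The genuinely delicate point — and the main obstacle — is the $H$-problem: since $H_0$ is an energy-norm closure and, precisely because of the Lavrentiev gap we aim to construct, need not be closed under weak $W^{s,p}$ limits, the compactness argument above does not keep the limit inside $H_{u_D}$. I would resolve this by working directly in the energy space: since $p,q\in(1,\infty)$, the energy modular satisfies both a $\Delta_2$ and a $\nabla_2$ condition, so $(W_0(3\Omega),\norm{\cdot}_{\mathcal{J}})$ is reflexive, by the standard Musielak--Orlicz argument applied to the difference-quotient representation of $v$ (cf.\ the function-space theory of \textcite{ACPS21}). A minimizing sequence $\tilde v_k=u_D+h_k$ for \eqref{eq:var2} then has $(h_k)$ bounded in $\norm{\cdot}_{\mathcal{J}}$, hence a weakly convergent subsequence; $H_0$ is convex and norm-closed, hence weakly closed, so the weak limit keeps $\tilde v_k$'s limit inside $H_{u_D}$, and weak lower semicontinuity of the convex, strongly lsc functional $\mathcal{J}$ identifies it as $\omega_H$ (the same scheme re-proves existence of $\omega_W$). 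Uniqueness in both cases is the strict-convexity observation above. The nonlocality, the possibly different exponents $p\neq q$, and the degeneracy of $a$ never obstruct the argument — they enter only through the elementary Fatou step.
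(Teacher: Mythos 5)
Your argument is correct and runs a more explicit version of the direct method that the paper simply invokes. The paper's one-line proof relies on a different, slightly slicker mechanism: it observes that $\mathcal{J}$ is a \emph{uniformly convex} modular (because $1<p,q<\infty$) and coercive, and then appeals to the standard fact that for a uniformly convex, coercive, lower semicontinuous functional every minimizing sequence is Cauchy in norm. Strong convergence then delivers a minimizer that automatically stays in any norm-closed convex set, so the $W$- and $H$-problems are handled simultaneously without ever invoking weak topologies or reflexivity, and uniqueness comes for free from the same uniform-convexity estimate. Your route instead goes through reflexivity of the energy space (via the $\Delta_2$, $\nabla_2$ conditions), extracts a weak limit, and then needs the observation that the norm-closed convex set $H_0$ is weakly closed; you also give a careful pointwise strict-convexity argument for uniqueness, which the paper leaves implicit. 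Both routes are valid and use only the hypotheses $1<p,q<\infty$; the main thing you correctly flag — that an a.e./compact-embedding argument cannot by itself keep the limit inside the energy-norm closure $H_0$, precisely because $H_0$ may be a strict subspace — is exactly the point the paper's uniform-convexity shortcut is designed to bypass, since there the minimizing sequence converges strongly rather than merely weakly.
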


The functional~$\mathcal{J}$ is well defined on~$W_{u_D}$ and ~$H_{u_D}$, uniformly convex and coercive, so  we can apply the standard theory of variational integrals, which ensures the existence of a unique minimizer  ~$v\in W_{u_D}$ and~$v\in H_{u_D}$.

\section{Cantor barriers}\label{sec:Cantor}

We start  with the one dimensional generalized Cantor
set~$\frC_\lambda$ with $\lambda \in (0,\frac 12)$, which is also
known as the (1-2$\lambda$)-middle Cantor set. We begin with the
interval~$\frC_{\lambda,0} := (-\frac 12,\frac 12)$. Then we
define~$\frC_{\lambda,k+1}$ inductively by removing the
middle~$1-2\lambda$ parts from~$\frC_{\lambda,k}$. In particular, we
define~$\frC_\lambda := \cap_{k \geq 1} \frC_{\lambda,k}$.  The
corresponding Cantor measure~$\mu_\lambda$ (also Cantor distribution)
is then defined as the weak limit of the
measures~$\mu_{\lambda,k}:=(2\lambda)^{-k}
\indicator_{\frC_{\lambda,k}}\,dx$. The factor $(2 \lambda)^{-k}$ is chosen such 
that~$\mu_{\lambda,k}([-\frac 12,\frac 12])=1= \mu([-\frac 12,\frac
12])$.  Thus, $\mu_\lambda(\setR)=1$ and
$\support \mu_\lambda = \frC_\lambda$.  The fractal dimension
of~$\frC_\lambda$ is
$\dim(\frC_\lambda) = \log(2)/ \log(1/\lambda) \in (0,1)$,
i.e. $\lambda = 2^{-\frD}$.

We will also need the~$m$-dimensional Cantor sets~$\frC_\lambda^m$ and
its distribution~$\mu^m_\lambda$, which are just the Cartesian
products of $\frC_\lambda$ and $\mu_\lambda$. Its fractal dimension
is~$\dim \frC_\lambda^m = m \dim \frC_\lambda = m \log(2)/ \log(1/
\lambda) \in (0,m)$. Note that
$\frC_\lambda^m = \cap_{k \geq 1} \frC_{\lambda,k}^m$.

Our construction is based on the following "upper" and "lower" sets: 
\begin{align*}
    \mathcal{M}_\tau^+&:={\set{z\in \Omega, z_d>0: \dis(\overline{z},\frC^m_\lambda)\le \tau\abs{z_d}}},\\
    \mathcal{M}_\tau^-&:={\set{z\in \Omega, z_d<0: \dis(\overline{z},\frC^m_\lambda)\le \tau\abs{z_d}}},\\
    \mathcal{M}_\tau&:=\mathcal{M}_\tau^+ \cup \mathcal{M}_\tau^-.
\end{align*}

\begin{lemma}[\cite{BalDieSur20}]
  \label{lem:cantor-estimates}
  Let~$\lambda \in (0,\frac 12)$, $1\leq m \leq d$ and
  $\frD := \dim(\frC_\lambda^m) = -m \log(2)/\log(\lambda)$. We
  use the notation $x=(\overline{x},\hat{x}) \in \setR^m \times \setR^{d-m}$.
  Then we have the following properties: 
  \begin{enumerate}
    \setlength{\itemsep}{1ex}%
  \item \label{itm:cantor-estimates1} For  every
    ball~$B^m_r(\overline{x})$ there
    holds~$\mu^m_\lambda (B^m_r(\overline{x})) \lesssim
    \indicator_{\set{d(\overline{x},\frC^m_\lambda)\le r}} r^{\frD}$.

  \item \label{itm:cantor-estimates2} For all $r>0$ there holds
    $
    \mathcal{L}^m\big(                                \set{
      \overline{x}:   
      d(\overline{x},\frC^m_\lambda) \leq  r } \big)
    \lesssim r^{m-\frD}$.
  \item \label{itm:cantor-estimates3}
    For all $\tau \in (0,4]$ there
    holds
    \begin{align*}
      \bigabs{\big((\mu_\lambda^m \times \delta_0^{d-m}) *
      \indicator_{\set{\abs{\overline{x}}\leq \tau\abs{\hat{x}}}}\big)(x)} 
      \lesssim
      \indicator_{ \set{ 
      d(\overline{x},\frC^m_\lambda) \leq\tau \abs{\hat{x}}}}(x)
      \,\abs{\hat{x}}^\frD.
    \end{align*}
  \end{enumerate}
\end{lemma}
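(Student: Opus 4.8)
The plan is to prove the three Cantor estimates in Lemma~\ref{lem:cantor-estimates} by a common self-similarity/covering argument, exploiting that $\frC_\lambda^m$ is built by $k$-fold subdivision into $2^m$ children, each a scaled copy by factor $\lambda^k$, and that the measure $\mu_\lambda^m$ assigns mass $2^{-mk}=\lambda^{k\frD/\cdots}$ — more precisely, using $\lambda=2^{-\frD/m}$, each level-$k$ cube $Q$ of sidelength $\lambda^k$ meeting $\frC_\lambda^m$ carries $\mu_\lambda^m(Q)=2^{-mk}=(\lambda^k)^{\frD}$. For \eqref{itm:cantor-estimates1}, given $B^m_r(\overline x)$, if $d(\overline x,\frC_\lambda^m)>r$ the ball is disjoint from the support and the bound is trivial; otherwise choose $k$ with $\lambda^{k+1}<r\le\lambda^k$, so $B^m_r(\overline x)$ is covered by a bounded number (depending only on $m$) of level-$k$ cubes, whence $\mu_\lambda^m(B^m_r(\overline x))\lesssim (\lambda^k)^\frD\lesssim r^\frD$, using $\lambda^k<r/\lambda$. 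For \eqref{itm:cantor-estimates2}, the $r$-neighbourhood $\{d(\overline x,\frC_\lambda^m)\le r\}$ is contained in the union of the $(1+2\tau_0)$-dilates of the level-$k$ cubes meeting $\frC_\lambda^m$ (same $k$ as before); there are $2^{mk}$ such cubes, each of Lebesgue measure $\simeq(\lambda^k)^m$, so the total measure is $\lesssim 2^{mk}(\lambda^k)^m=(2\lambda^{\,\cdot})^{mk}$; since $2^{mk}\lambda^{mk}=(2\lambda^{m/\frD\cdot\frD/m}\cdots)$, using $2=\lambda^{-\frD/m}$ one gets $2^{mk}(\lambda^k)^m=\lambda^{k(m-\frD)}\simeq r^{m-\frD}$ because $\lambda^k\simeq r$.

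For part~\eqref{itm:cantor-estimates3}, write the convolution explicitly:
\begin{align*}
  \big((\mu_\lambda^m\times\delta_0^{d-m})*\indicator_{\{|\overline x|\le\tau|\hat x|\}}\big)(x)
  &=\int_{\setR^m}\indicator_{\{|\overline x-\overline w|\le\tau|\hat x|\}}\,d\mu_\lambda^m(\overline w)
  =\mu_\lambda^m\big(B^m_{\tau|\hat x|}(\overline x)\big).
\end{align*}
Then \eqref{itm:cantor-estimates3} is an immediate consequence of \eqref{itm:cantor-estimates1} applied with $r=\tau|\hat x|$: the indicator $\indicator_{\{d(\overline x,\frC_\lambda^m)\le\tau|\hat x|\}}$ appears automatically, and $r^\frD=(\tau|\hat x|)^\frD\lesssim|\hat x|^\frD$ since $\tau\le4$. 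So the real content is parts \eqref{itm:cantor-estimates1} and \eqref{itm:cantor-estimates2}, and \eqref{itm:cantor-estimates3} is a corollary; this matches the claim being attributed to \cite{BalDieSur20}.

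The main obstacle — really the only nonroutine point — is bookkeeping the constants in the covering argument: one must verify that the number of level-$k$ cubes needed to cover a ball of radius comparable to the cube sidelength depends only on $m$ (not on $k$ or $\lambda$), and that passing between the scales $\lambda^k$ and $\lambda^{k+1}$ only costs a factor $\lambda^{-\frD}$ or $\lambda^{-(m-\frD)}$, harmless since $\lambda$ is a fixed parameter. One should also be slightly careful that the implied constants are allowed to depend on $\lambda$ (equivalently on $\frD$), which is consistent with the statement since $\lambda$ is fixed throughout; the dyadic relation $\lambda=2^{-\frD/m}$ is what converts the two separate bounds $2^{mk}$ (number of cubes) and $(\lambda^k)^m$ (size of each) into the single power $r^{m-\frD}$. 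Once these elementary facts about the self-similar structure are set up, all three estimates follow by choosing the unique dyadic-in-$\lambda$ scale $k$ adapted to the radius $r$ (or to $\tau|\hat x|$) and summing.
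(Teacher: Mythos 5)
The paper does not prove this lemma but cites it from \cite{BalDieSur20}, so there is no in-paper argument to compare against; your self-similarity/covering proof is correct and is the standard one for such Cantor estimates, and the reduction of part~(3) to part~(1) via $(\mu^m_\lambda\times\delta_0^{d-m})*\indicator_{\{|\overline x|\le\tau|\hat x|\}}(x)=\mu^m_\lambda(B^m_{\tau|\hat x|}(\overline x))$ is exactly right. One minor typo: $\lambda=2^{-\frD/m}$ should read $\lambda=2^{-m/\frD}$, equivalently $\lambda^\frD=2^{-m}$, which is the relation your computations (e.g.\ $\mu^m_\lambda(Q)=2^{-mk}=(\lambda^k)^\frD$ and $2=\lambda^{-\frD/m}$) actually use, so the argument is unaffected.
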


We also use  the following slightly reformulated version of    \cite[Lemma 7]{BalDieSur20}.
\begin{lemma}
  \label{lem:basic-integrals}
  Let~$\lambda \in (0,\frac 12)$, $1\leq m \leq d$ and
  \begin{align*}
    \frD := \dim(\frC_\lambda^m) = -m \log(2)/\log(\lambda).
  \end{align*}
  
  We use
  the notation $x=(\overline{x},\hat{x}) \in \setR^m \times \setR^{d-m}$. Then
  \begin{enumerate}
    \setlength{\itemsep}{1ex}
  \item \label{itm:cantor-weak1}
    $\abs{\hat{x}}^{-(d-\frD)} \indicator_{\set{\abs{\overline{x}} \leq 4 \abs{\hat{x}}}} \in
    L^{1,\infty}(\Rd)$.
  \item \label{itm:cantor-weak2}
    $\abs{\hat{x}}^{-(d-\frD)} \indicator_{\set{\dis(\overline{x},\frC^m_\lambda)
        \leq 4\, \abs{\hat{x}}}} \in L^{1,\infty}(\Rd)$.
  \end{enumerate}
\end{lemma}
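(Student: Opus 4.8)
The plan is to check the defining weak-type property directly: a measurable $g$ belongs to $L^{1,\infty}(\setR^d)$ exactly when $\norm{g}_{1,\infty}:=\sup_{\gamma>0}\gamma\,\mathcal{L}^d(\set{\abs{g}>\gamma})<\infty$, so for each item it suffices to bound the measure of the super-level sets by a constant times~$\gamma^{-1}$. I write $x=(\overline{x},\hat{x})\in\setR^m\times\setR^{d-m}$ and use that $d-\frD>0$, since $\frD=\dim(\frC^m_\lambda)<m\le d$. First I would prove~\eqref{itm:cantor-weak2}, and then deduce~\eqref{itm:cantor-weak1} from it: fixing any point $p\in\frC^m_\lambda$, the inequality $\dis(\overline{x},\frC^m_\lambda)\le\abs{\overline{x}-p}$ shows that $\abs{\hat{x}}^{-(d-\frD)}\indicator_{\set{\abs{\overline{x}-p}\le 4\abs{\hat{x}}}}$ is pointwise dominated by the function in~\eqref{itm:cantor-weak2}, hence belongs to $L^{1,\infty}(\setR^d)$; and the function in~\eqref{itm:cantor-weak1} is obtained from it by a translation in the $\overline{x}$--variable, which leaves $\abs{\hat{x}}$, and therefore the $L^{1,\infty}$--norm, unchanged.

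For~\eqref{itm:cantor-weak2} set $g(x):=\abs{\hat{x}}^{-(d-\frD)}\indicator_{\set{\dis(\overline{x},\frC^m_\lambda)\le 4\abs{\hat{x}}}}(x)$, fix $\gamma>0$, and put $\rho:=\gamma^{-1/(d-\frD)}$, so that $\abs{\hat{x}}^{-(d-\frD)}>\gamma$ is equivalent to $\abs{\hat{x}}<\rho$. Then $\set{g>\gamma}=\set{x:\abs{\hat{x}}<\rho,\ \dis(\overline{x},\frC^m_\lambda)\le 4\abs{\hat{x}}}$ is a truncated ``fractal cone'' around $\frC^m_\lambda\times\set{0}$. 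Its measure is computed by Fubini's theorem, integrating first over $\overline{x}$, estimating the resulting $\overline{x}$--section through Lemma~\ref{lem:cantor-estimates}\eqref{itm:cantor-estimates2} applied with $r=4\abs{\hat{x}}$, and then passing to polar coordinates in $\setR^{d-m}$:
\begin{align*}
 \mathcal{L}^d(\set{g>\gamma})
 &=\int_{\set{\abs{\hat{x}}<\rho}}\mathcal{L}^m(\set{\dis(\overline{x},\frC^m_\lambda)\le 4\abs{\hat{x}}})\,\mathrm{d}\hat{x}\\
 &\lesssim\int_0^\rho r^{m-\frD}\,r^{d-m-1}\,\mathrm{d}r
 =\int_0^\rho r^{d-\frD-1}\,\mathrm{d}r
 =\frac{\rho^{d-\frD}}{d-\frD}
 =\frac{\gamma^{-1}}{d-\frD},
\end{align*}
the radial integral converging at $r=0$ precisely because $d-\frD-1>-1$. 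Multiplying by $\gamma$ yields $\norm{g}_{1,\infty}<\infty$, which is~\eqref{itm:cantor-weak2}.

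The only step that requires genuine care is the application of Lemma~\ref{lem:cantor-estimates}\eqref{itm:cantor-estimates2} at scale $r=4\abs{\hat{x}}$: it is exactly the fractal-neighbourhood bound $\mathcal{L}^m(\set{\dis(\cdot,\frC^m_\lambda)\le r})\lesssim r^{m-\frD}$ that makes the radial integral scale like $\rho^{d-\frD}=\gamma^{-1}$, that is, that produces the borderline weak-$L^1$ behaviour. The exponent $d-\frD$ in the radial weight is critical for this --- a strictly smaller power would place the function in $L^1$, a strictly larger one would destroy even the weak-type estimate --- which is why $L^{1,\infty}$, and not $L^1$, is the natural space in this estimate. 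Beyond this, the proof is bookkeeping: the nonlinear substitution $\gamma\leftrightarrow\rho$, and keeping straight the $m$-, $(d-m)$- and $\frD$-dependent exponents.
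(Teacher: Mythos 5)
The paper itself does not prove Lemma~\ref{lem:basic-integrals}: it imports it (``slightly reformulated'') from \cite[Lemma~7]{BalDieSur20}, so there is no in-paper proof to compare against. Your argument is the natural direct one and is essentially correct. Computing the super-level set $\set{g>\gamma}$, slicing by Fubini, applying Lemma~\ref{lem:cantor-estimates}~\eqref{itm:cantor-estimates2} at scale $r=4\abs{\hat x}$, and integrating radially in $\hat x\in\setR^{d-m}$ gives $\gamma\,\mathcal L^d(\set{g>\gamma})\lesssim\gamma\,\rho^{d-\frD}=1$ with $\rho=\gamma^{-1/(d-\frD)}$, which is precisely the weak-$L^1$ bound; and your reduction of item~\eqref{itm:cantor-weak1} to item~\eqref{itm:cantor-weak2} via the pointwise domination $\dis(\overline x,\frC^m_\lambda)\le\abs{\overline x-p}$ for $p\in\frC^m_\lambda$, followed by a measure-preserving translation in $\overline x$, is clean and correct.

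One caveat is worth recording, though it is inherited from the auxiliary lemma rather than introduced by you. Since $\frC^m_\lambda$ is a bounded set, the bound $\mathcal L^m(\set{\dis(\overline x,\frC^m_\lambda)\le r})\lesssim r^{m-\frD}$ cannot hold with a uniform constant for all $r>0$ (for $r\to\infty$ the left-hand side grows like $r^m$ while $m-\frD<m$), and correspondingly neither function in Lemma~\ref{lem:basic-integrals} is weak-$L^1$ over all of $\setR^d$: for $\gamma\to0^+$ (equivalently $\rho\to\infty$) the quantity $\gamma\,\mathcal L^d(\set{g>\gamma})$ grows like $\gamma^{-\frD/(d-\frD)}$. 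Both statements are intended, and only ever used (cf.\ Corollary~\ref{cor:cor} and the estimates in Sections~\ref{sec:sub}--\ref{sec:super}), after restriction to a bounded set such as $3\Omega$, where $\abs{\hat x}$ stays bounded. Under that reading your computation handles $\gamma$ large (small $\rho$), and the trivial bound $\gamma\cdot\mathcal L^d(3\Omega)$ covers $\gamma$ small, so the proof closes. It would be worth making that restriction explicit rather than implicit when you write this up.
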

\begin{corollary}\label{cor:cor}
 If~$\sigma>-(d-\frD)$, then
 \begin{align*}
  \int_\Omega \abs{\hat{z}}^\sigma\indicator_{\mathcal{M}_4(z) }\,dz<\infty.
 \end{align*}

\end{corollary}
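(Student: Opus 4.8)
The plan is to reduce the statement to the weak-type estimate in Lemma~\ref{lem:basic-integrals}\eqref{itm:cantor-weak2}. First I would observe that, by definition of the sets $\mathcal{M}_\tau$, a point $z=(\overline{z},\hat{z})$ lies in $\mathcal{M}_4$ precisely when $z_d \neq 0$ and $\dis(\overline{z},\frC^m_\lambda)\le 4\abs{z_d}$; since $\abs{\hat z}\ge \abs{z_d}$ but also $\abs{\hat z}$ is comparable to $\abs{z_d}$ up to the bounded factor coming from the remaining $d-m-1$ coordinates being confined to $\Omega$, we have on $\mathcal{M}_4$ the two-sided bound $\abs{\hat z}\sim \abs{z_d}$ and, in particular, $\indicator_{\mathcal{M}_4(z)} \le \indicator_{\set{\dis(\overline z,\frC^m_\lambda)\le 4\abs{\hat z}}}(z)$ together with $\abs{\hat z}\lesssim 1$ on $\Omega$.

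Next I would split into two regimes according to the sign of $\sigma$. If $\sigma \ge 0$, then $\abs{\hat z}^\sigma \lesssim 1$ on the bounded set $\Omega$, and the integral is bounded by $\mathcal{L}^d(\Omega)<\infty$, so there is nothing to prove. The interesting case is $-(d-\frD)<\sigma<0$. Here I would write $g(z) := \abs{\hat z}^{-(d-\frD)}\indicator_{\set{\dis(\overline z,\frC^m_\lambda)\le 4\abs{\hat z}}}(z)$, which by Lemma~\ref{lem:basic-integrals}\eqref{itm:cantor-weak2} belongs to $L^{1,\infty}(\Rd)$. The exponent $\sigma$ can be written as $\sigma = -(d-\frD)\cdot\theta$ for some $\theta\in(0,1)$ since $0<-\sigma<d-\frD$; then $\abs{\hat z}^\sigma = g(z)^{\theta}$ on the relevant set. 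Because the region of integration is the bounded set $\Omega$, I would estimate using the layer-cake / distributional form of the $L^{1,\infty}$ quasi-norm: for a function $g\in L^{1,\infty}$ and any exponent $\theta\in(0,1)$, one has $\int_\Omega g^\theta\,dz = \theta\int_0^\infty \lambda^{\theta-1}\,\mathcal{L}^d(\set{z\in\Omega: g(z)>\lambda})\,d\lambda$, and we split the $\lambda$-integral at $\lambda=1$: for $\lambda\le 1$ we bound the measure crudely by $\mathcal{L}^d(\Omega)$ and use $\int_0^1\lambda^{\theta-1}\,d\lambda<\infty$, while for $\lambda>1$ we use the weak-type bound $\mathcal{L}^d(\set{g>\lambda})\lesssim \norm{g}_{L^{1,\infty}}/\lambda$ and $\int_1^\infty \lambda^{\theta-2}\,d\lambda<\infty$ since $\theta<1$. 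Both contributions are finite, which gives the claim.

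Alternatively, and perhaps more transparently, I would avoid the abstract interpolation and argue directly with the decomposition into dyadic annuli $A_j := \set{z\in\Omega:\, 2^{-j-1}<\abs{\hat z}\le 2^{-j}}$ for $j\ge 0$. On $A_j\cap\mathcal{M}_4$ we have $\abs{\hat z}^\sigma \sim 2^{-j\sigma}$ and the condition $\dis(\overline z,\frC^m_\lambda)\le 4\abs{\hat z}\le 2^{-j+2}$; by Lemma~\ref{lem:cantor-estimates}\eqref{itm:cantor-estimates2} the $\overline z$-slice has $\mathcal{L}^m$-measure $\lesssim (2^{-j})^{m-\frD}$, the $z_d$-variable ranges over an interval of length $\lesssim 2^{-j}$, and the remaining $d-m-1$ coordinates are bounded, so $\mathcal{L}^d(A_j\cap\mathcal{M}_4)\lesssim 2^{-j(m-\frD)}\cdot 2^{-j} = 2^{-j(m-\frD+1)}$. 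Actually it is cleaner to use that $A_j\cap\mathcal{M}_4\subset\set{\dis(\overline z,\frC^m_\lambda)\le 2^{-j+2}}\cap\set{\abs{\hat z}\le 2^{-j}}$, whose measure is $\lesssim 2^{-j(m-\frD)}\cdot 2^{-j(d-m)}=2^{-j(d-\frD)}$. Hence the integral is dominated by $\sum_{j\ge 0} 2^{-j\sigma}\,2^{-j(d-\frD)} = \sum_{j\ge 0} 2^{-j(\sigma + d - \frD)}$, which converges exactly under the hypothesis $\sigma > -(d-\frD)$. The only point requiring a little care — and the main obstacle — is the geometric bookkeeping relating $\abs{\hat z}$, $\abs{z_d}$ and the defining inequality of $\mathcal{M}_4$, namely verifying that on $\mathcal{M}_4$ one may indeed replace $\abs{z_d}$ by $\abs{\hat z}$ up to constants depending only on $d$; this is where the boundedness of $\Omega$ (forcing the auxiliary coordinates $\hat z\setminus z_d$ to contribute only an $O(1)$ factor) is used, and it is straightforward but should be stated explicitly.
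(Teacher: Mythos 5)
Both of your arguments are correct, and the first one is essentially what the paper intends: the corollary is stated immediately after Lemma~\ref{lem:basic-integrals} without a separate proof, so the expected route is precisely the one you give, namely to observe that $\indicator_{\mathcal{M}_4}\le\indicator_{\set{\dis(\overline z,\frC^m_\lambda)\le 4\abs{\hat z}}}$, dominate $\abs{\hat z}^\sigma\indicator_{\mathcal{M}_4}$ by $g^\theta$ with $g\in L^{1,\infty}(\Rd)$ and $\theta\in(0,1)$, and run the layer-cake estimate on the bounded set $\Omega$. Your second (dyadic) argument is a genuinely different and somewhat more self-contained route: it bypasses the weak-type lemma entirely and reduces everything to the measure estimate of Lemma~\ref{lem:cantor-estimates}\eqref{itm:cantor-estimates2}, at the cost of redoing by hand the dyadic summation that the $L^{1,\infty}$ machinery packages. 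Either is acceptable; the first is shorter given the tools already in place, the second would let one dispense with Lemma~\ref{lem:basic-integrals} altogether in this application.

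One correction to a side remark you make twice: on $\mathcal{M}_4$ the quantities $\abs{\hat z}$ and $\abs{z_d}$ are \emph{not} comparable. When $m<d-1$, the coordinates of $\hat z$ other than $z_d$ are constrained only to lie in $\Omega$, so they contribute $O(1)$ to $\abs{\hat z}$ while $\abs{z_d}$ can be arbitrarily small; hence the ratio $\abs{\hat z}/\abs{z_d}$ is unbounded on $\mathcal{M}_4$. What is true, and all that either of your arguments actually uses, is the one-sided inequality $\abs{z_d}\le\abs{\hat z}$, which gives the containment $\mathcal{M}_4\subset\set{\dis(\overline z,\frC^m_\lambda)\le 4\abs{\hat z}}$. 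So the ``main obstacle'' you flag at the end is in fact not an obstacle at all, and the claim of two-sided comparability should be dropped; both proofs go through unchanged once this is removed. (Your first, cruder bound $\mathcal{L}^d(A_j\cap\mathcal{M}_4)\lesssim 2^{-j(m-\frD+1)}$ is a symptom of this confusion: on $A_j$ the constraint $\abs{\hat z}\le 2^{-j}$ already restricts \emph{all} $d-m$ components of $\hat z$, not only $z_d$, which is exactly why the sharper bound $2^{-j(d-\frD)}$ that you correctly end up with is the right one.)
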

\section{Nonlocal model \textbf{IV}}\label{nonlocal}
In this section we construct example of Lavrentiev phenomenon for the nonlocal model \textbf{IV}, so when both phases $\mathcal{J}_{p}^s(v)$ and~$\mathcal{J}_{q,a}^t(v)$  of double phase energy~$\mathcal{J}(v)$ are nonlocal.  
\subsection{Subcritical case}
\label{sec:sub}
In this section we consider the subcritical case:~$\ind(W^{s,p}):=s-\frac{d}{p}<0$. The main result is formulated in the following theorem. 
\begin{theorem}\label{thm:sub}
 Let $1<p,q<\infty$,$0<s\le  t<1$  and ~$s-\frac d p <0$,~$\alpha>0$, and $tq > sp + \alpha$.  Then there
exists an integrand  with non negative, bounded  weight ~$a(\cdot,\cdot )\in C^{\alpha}\cap L^\infty(\setR^d\times \setR^d)$ such that Lavrentiev gap occurs. The choice~$t=1$ or~$s=1$ corresponds to the mixed non-local model result.  
\end{theorem}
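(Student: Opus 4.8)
\textbf{Proof plan for Theorem~\ref{thm:sub}.}
The plan is to build the competitor and the weight explicitly, following the two-step strategy outlined in the introduction: produce a function $u_C$ with finite $\mathcal{J}^s_p$-energy whose $\mathcal{J}^t_{q,a}$-energy vanishes, and show that every continuous competitor agreeing with $u_D$ outside $\Omega$ has $\mathcal{J}^t_{q,a}$-energy bounded below by a positive constant. I would first fix parameters: choose $m$ and $\lambda\in(0,\tfrac12)$ so that the Cantor dimension $\frD=\dim\frC_\lambda^m$ satisfies $d-\frD$ strictly between $sp$ and $tq-\alpha$ (possible precisely because $tq-\alpha>sp$ is the hypothesis, and $\frD$ ranges over $(0,m)$ with $m\le d$, so $d-\frD$ can be made to land in any subinterval of $(d-m,d)$; one takes $m=d$ if necessary). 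The competitor is the ``jump across the contact set'' function: let $u_C(z)$ be a smooth bump in the $\hat z$-direction times a function that is $+1$ for $z_d>0$ and $-1$ for $z_d<0$ near $\frC^m_\lambda\times\{0\}$, smoothed away from the contact set at scale $\tau|\hat z|$, i.e. $u_C$ transitions inside $\mathcal{M}^+_\tau\cup\mathcal{M}^-_\tau$; outside $3\Omega$ set $u_C=0$. Then $u_D=\eta u_C$ as in Definition~\ref{def:H}.

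Next I would verify $\mathcal{J}^s_p(u_C)<\infty$. The only dangerous contribution to the Gagliardo double integral comes from pairs $(x,y)$ separated by the contact set, where $|u_C(x)-u_C(y)|\sim 1$ while $|x-y|$ can be as small as $\mathrm{dist}(x,\frC^m_\lambda\times\{0\})+\mathrm{dist}(y,\ldots)$. Using the geometry of $\mathcal{M}_\tau$ and Lemma~\ref{lem:cantor-estimates}\eqref{itm:cantor-estimates3} (convolution of the Cantor measure against the cone indicator) together with Lemma~\ref{lem:basic-integrals} and Corollary~\ref{cor:cor}, the $y$-integral of $|x-y|^{-d-sp}$ over the region where the jump is seen produces a factor $\sim |\hat x|^{\frD - sp}$ against the Cantor-type density, and then Corollary~\ref{cor:cor} applies because $\frD-sp>-(d-\frD)$ is equivalent to $sp<d$, wait — more precisely one needs the resulting exponent $\sigma=\frD-sp-?$ to satisfy $\sigma>-(d-\frD)$; this is arranged by the choice $d-\frD>sp$. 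The smooth part of the modular (away from the contact set and the $\nabla u_C$-type pieces if $s=1$) is finite because $u_C$ is Lipschitz off a neighborhood of the contact set.

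Then I would design $a(x,y)$: take $a(x,y)\asymp (|x_d|^\alpha+|y_d|^\alpha)$ on the region where both $x,y$ lie in the ``vertical'' parts of $\Omega$ above/below the contact set, and $a(x,y)=0$ on a neighborhood of the diagonal near $\frC^m_\lambda\times\{0\}$ and outside $3\Omega$, interpolating to keep $a\in C^\alpha\cap L^\infty$. One checks $\mathcal{J}^t_{q,a}(u_C)=0$: wherever $u_C$ has its jump, $a$ vanishes, and wherever $a>0$, $u_C$ is smooth and the integrand, weighted by $a\lesssim |x_d|^\alpha+|y_d|^\alpha$ which vanishes at the contact set, is integrable against $|x-y|^{-d-tq}$ — indeed the exponent condition $tq-\alpha<d-\frD$ is exactly what makes this contribution finite, hence (after a scaling/homogeneity check showing it is in fact $0$, since $u_C$ is locally constant $=\pm1$ on the relevant regions and only transitions where $a=0$) it is zero. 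Finally, the lower bound: for any continuous $v$ with $v=u_D$ off $\Omega$, $v$ must also ``jump'' by $\sim 1$ across the contact set in a measure-theoretic sense, but now being continuous it cannot concentrate the jump on a null set, so on each scale-$k$ piece of the Cantor construction $v$ varies by a definite amount over a set of definite size in the region where $a\gtrsim |x_d|^\alpha$; summing the Gagliardo energy contributions over scales, using $tq-\alpha<d-\frD$ again (now for divergence of the relevant series / positivity of a Riesz-potential-type lower bound) gives $\mathcal{J}^t_{q,a}(v)\ge c>0$ uniformly. Combining (1) and (2): $\inf\mathcal{J}(W_{u_D})\le\mathcal{J}(u_C)=\mathcal{J}^s_p(u_C)<\infty$ while every element of $H_{u_D}$ can be approximated by continuous functions equal to $u_D$ off $\Omega$, so $\inf\mathcal{J}(H_{u_D})\ge c>0$ plus... — one must be slightly careful: one shows $\inf_{H_{u_D}}\mathcal{J}\ge \inf_{H_{u_D}}\mathcal{J}^t_{q,a}\ge c>0$ strictly exceeds... no: the clean statement is that $\mathcal{J}(u_C)<\inf\mathcal{J}(H_{u_D})$ fails to be automatic, so instead one arranges $\mathcal{J}^s_p(u_C)$ small (rescaling $u_C\mapsto \epsilon u_C$ is not allowed since boundary data is fixed, so instead one uses that the lower bound $c$ depends only on $\alpha,p,q,s,t,d$ and $\lambda$, and by choosing $\lambda$ close to its endpoint one can make $\mathcal{J}^s_p(u_C)$ as small as desired while keeping $c$ bounded below — this is the mechanism from \cite{BalDieSur20}).

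I expect the main obstacle to be the positive lower bound in step (2) for the \emph{nonlocal} second phase: in the local case one integrates $|\nabla v|^q$ against the weight on vertical slices and uses a one-dimensional Poincaré/jump argument, but here the $\mathcal{J}^t_{q,a}$ energy is a double integral with a singular kernel, so one needs a genuinely nonlocal lower bound — essentially a weighted fractional Poincaré inequality on the ``bowtie'' regions $\mathcal{M}^\pm_\tau$ — capturing that a continuous function bridging the value $+1$ above to $-1$ below must pay fractional energy on a set where $a$ is bounded below by a power of the height. This is precisely where the paper's announced ``estimates for operators of Riesz potential type'' replace the duality argument unavailable in the nonlocal setting, and getting the scaling of this inequality to match the exponent bookkeeping ($tq-\alpha$ versus $d-\frD$) is the crux.
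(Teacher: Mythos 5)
Your overall strategy — competitor with a jump across a Cantor barrier, a Hölder weight $a\sim|x_d|^\alpha+|y_d|^\alpha$ vanishing near the contact set, the three properties (finite $\mathcal{J}^s_p(u_C)$, vanishing $\mathcal{J}^t_{q,a}(u_C)$, positive lower bound for continuous competitors via a Riesz-potential/telescoping-average estimate), and the exponent bookkeeping $sp<d-\frD<tq-\alpha$ — is exactly the paper's. Your statement that ``the exponent condition $tq-\alpha<d-\frD$ makes this contribution finite'' twice reverses the inequality you yourself set up earlier, and the claim that some exponent condition is needed for $\mathcal{J}^t_{q,a}(u_C)=0$ is wrong (it is zero by construction, with no exponent constraint), but the underlying choice of $\frD$ is correct and does yield the hypothesis $tq>sp+\alpha$.

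The genuine gap is the final combining step. Property (1) gives $\mathcal{J}(u_C)=\mathcal{J}^s_p(u_C)<\infty$ and property (2) gives $\inf_{H_{u_D}}\mathcal{J}\ge\inf_{H_{u_D}}\mathcal{J}^t_{q,a}\ge c_0>0$, but nothing guarantees $\mathcal{J}^s_p(u_C)<c_0$, which is what you need. You notice this and, having correctly ruled out rescaling $u_C$, resort to ``take $\lambda$ toward an endpoint to make $\mathcal{J}^s_p(u_C)$ small while keeping $c_0$ bounded below.'' That is not the mechanism used here or in \cite{BalDieSur20}, and it is unjustified: as $\lambda$ varies so does $\frD$, and with it the exponent constraints, the constants in Lemma~\ref{lem:cantor-estimates}, and $c_0$ itself; there is no argument that $\mathcal{J}^s_p(u_C)\to 0$ while $c_0$ stays bounded away from zero. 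The trick you are missing is to rescale the \emph{weight}: replace $a$ by $\nu a$ for a large free parameter $\nu>0$. This leaves $\mathcal{J}^s_p(u_C)$ untouched and $\mathcal{J}^t_{q,\nu a}(u_C)$ still zero, while $\mathcal{J}^t_{q,\nu a}(v)=\nu\,\mathcal{J}^t_{q,a}(v)\ge\nu c_0$ for every admissible continuous $v$; choosing $\nu$ with $\nu c_0>\mathcal{J}^s_p(u_C)$ closes the gap in one line. Since $a$ is part of the example being constructed, scaling it is entirely legitimate — and $\nu a$ still belongs to $C^\alpha\cap L^\infty$ with the same regularity. This is the one idea your proposal lacks.
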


To prove the Theorem~\ref{thm:sub} we first construct the competitor function~$u_C\in W\setminus H$. For the subcritical case it is defined as following.

\begin{definition}[\cite{BalDieSur20},~Competitor~$u_C$]
  \label{def:com}
  Let $\Omega := (-1,1)^d\times(-1,1)^d$ with~$d\geq 2$.  Let $$\frS := \frC^{d-1}_\lambda \times \set{0}$$ and $$\frD= \dim(\frS)=\frac{(d-1)\log 2}{\log(1/\lambda)},$$
    where~$\lambda \in (0,\frac 12)$.  Let $$\rho \in C^\infty(\setR^d\setminus \frS)$$
    be such that 
    \begin{enumerate}[label={(\roman{*})}]
    \item
      $\indicator_{\set{\dis(\overline{x},\frC^{d-1}_\lambda) \leq 2 \abs{x_d}}}
      \leq \rho \leq \indicator_{\set{\dis(\overline{x},\frC^{d-1}_\lambda)
          \leq 4 \abs{x_d}}}$.
    \item
      $\abs{\nabla \rho} \lesssim \abs{x_d}^{-1}
      \indicator_{\set{2 \abs{x_d} \leq \dis(\overline{x},\frC^{d-1}_\lambda) \leq
          4 \abs{x_d}}}$.
    \end{enumerate}
    We define
    \begin{align*}
      u_C(x) &:= \frac 12 \sgn(x_d)\, \rho(x).
      \end{align*}
\end{definition}
Later we use the competitor ~$u_C$ as a Dirichlet boundary value. For this we need to define function~$u_D$, which is a smooth version of a function~$u_C$. 
\begin{definition}\label{def: u_D}
 We define ${u}_D=\eta u_C$, with cut-off function $\eta \in C^\infty_0(\Omega)$,  $\indicator_{(-\frac46,\frac46)^d} \leq \eta \leq
  \indicator_{(-\frac56,\frac56)^d}$ and $\norm{\nabla \eta}_\infty \leq
  c$.
\end{definition}

In  \cite{BalDieSur20}[Proposition 14] the following estimate for the gradient of the competitor function~$u_C$ has been proved:  
\begin{lemma}[estimate for the competitor, local]
\begin{align}\label{subgrad}
  \abs{\nabla u_C} \lesssim \abs{x_d}^{-1} \indicator_{
                     \set{
                     2 \abs{x_d} \leq 
                     \dis(\overline{x},\frC_\lambda^{d-1}) \leq 4 \abs{x_d}
                     }
                     }.
\end{align}
\end{lemma}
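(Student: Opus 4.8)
The plan is to verify the estimate \eqref{subgrad} for $\nabla u_C$ by differentiating the explicit formula $u_C(x) = \tfrac12\sgn(x_d)\rho(x)$ from Definition~\ref{def:com}. Since $u_C$ is smooth away from the contact set $\frS = \frC^{d-1}_\lambda\times\{0\}$ and we only need a pointwise bound on $\setR^d\setminus\frS$, the first step is to observe that the factor $\sgn(x_d)$ is locally constant wherever $x_d\neq 0$, so on the set $\{x_d\neq 0\}$ the product rule gives $\nabla u_C = \tfrac12\sgn(x_d)\nabla\rho$. The only delicate point is what happens near $\{x_d=0\}$, but there the support condition (i) on $\rho$ forces $\rho$ to vanish: if $x_d=0$ and $\overline x\notin\frC^{d-1}_\lambda$ then $\dis(\overline x,\frC^{d-1}_\lambda)>0=4\abs{x_d}$, so $\rho\equiv 0$ in a neighborhood of such a point and $u_C$ is identically zero there. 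Hence no singular (distributional) contribution arises from differentiating $\sgn(x_d)$ across the hyperplane, and the identity $\abs{\nabla u_C}=\tfrac12\abs{\nabla\rho}$ holds on all of $\setR^d\setminus\frS$.

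The second step is then purely to insert the gradient bound (ii) for $\rho$, namely
\begin{align*}
 \abs{\nabla\rho}\lesssim \abs{x_d}^{-1}\indicator_{\set{2\abs{x_d}\le\dis(\overline x,\frC^{d-1}_\lambda)\le 4\abs{x_d}}},
\end{align*}
which yields $\abs{\nabla u_C}\lesssim \abs{x_d}^{-1}\indicator_{\set{2\abs{x_d}\le\dis(\overline x,\frC^{d-1}_\lambda)\le 4\abs{x_d}}}$, absorbing the harmless constant $\tfrac12$ into the implicit constant. This is exactly \eqref{subgrad} with $m=d-1$, $\hat x = x_d$, $\overline x\in\setR^{d-1}$.

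To see that a function $\rho$ with properties (i)--(ii) actually exists, I would construct it by mollification: take a smooth cutoff $\psi\colon\setR\to[0,1]$ with $\psi\equiv 1$ on $(-\infty,2]$ and $\psi\equiv 0$ on $[4,\infty)$, and set $\rho(x) := \psi\!\big(\dis(\overline x,\frC^{d-1}_\lambda)/\abs{x_d}\big)$ away from $\frS$; this is smooth on $\setR^d\setminus\frS$ since $\dis(\cdot,\frC^{d-1}_\lambda)$ is Lipschitz and the argument of $\psi$ is smooth wherever it is not locally constant, with the sandwich bound (i) immediate and (ii) following from the chain rule because $\abs{\nabla(\dis(\overline x,\frC^{d-1}_\lambda)/\abs{x_d})}\lesssim\abs{x_d}^{-1}$ on the transition region. (Alternatively one simply cites \cite{BalDieSur20}, where both the construction and the estimate are given.)

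The proof has essentially no obstacle: it is a one-line product-rule computation once one notes the vanishing of $\rho$ near the hyperplane $\{x_d=0\}\setminus\frS$. The only point requiring a moment's care — and the one I would state explicitly — is precisely that observation, since a careless reading might worry about a jump of $\sgn(x_d)$ producing a surface measure term; the support property (i) of $\rho$ is exactly what rules this out.
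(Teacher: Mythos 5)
Your proof is correct, and since the paper itself merely cites \cite{BalDieSur20}[Proposition~14] for this estimate rather than reproducing an argument, your direct verification via the product rule — splitting into $\{x_d\neq 0\}$ where $\sgn(x_d)$ is locally constant, and $\{x_d=0\}\setminus\frS$ where the support condition (i) on $\rho$ forces $u_C\equiv 0$ locally — is exactly the right way to fill in the detail, and inserting property (ii) of $\rho$ then gives \eqref{subgrad} immediately. The only small caveat is in your optional side-remark on constructing $\rho$: the formula $\psi\big(\dis(\overline x,\frC^{d-1}_\lambda)/\abs{x_d}\big)$ is only Lipschitz, not $C^\infty$, because the distance function is not smooth, so one would need an additional mollification step to obtain a genuinely smooth $\rho$; however, this does not affect the lemma at hand, since $\rho$ with properties (i)--(ii) is already given in Definition~\ref{def:com}.
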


The   weight~$a(y,z)$ for the example on Lavrentiev gap is defined as following 
\begin{definition}

 Let~$\rho^-, \rho^+ \in C^\infty(\setR^d\setminus \mathcal{\frS})$ be such that
\begin{align*}
\indicator _{\set{\dis(\abs{\overline{x}},\frC_\lambda^{d-1}) \le \frac{1}{2}x_d }} &\le \rho^-(x)\le \indicator _{\set{\dis(\abs{\overline{x}},\frC_\lambda^{d-1}) \le2x_d }},\quad x_d<0,\\
\indicator _{\set{\dis(\abs{\overline{x}},\frC_\lambda^{d-1}) \le \frac{1}{2}x_d }} &\le \rho^+(x)\le \indicator _{\set{\dis(\abs{\overline{x}},\frC_\lambda^{d-1}) \le 2x_d }},\quad x_d>0
\end{align*}
We define 
\begin{align*}
a(y,z):=\big(\rho^+(y)\rho^+(z)+\rho^-(y)\rho^-(z)\big)(\abs{y_d}^\alpha+\abs{z_d}^\alpha).
\end{align*}
\end{definition}

The weight~$a$ satisfies ~$a(x,y)\in C^\alpha(\overline{\Omega}\times\ \overline{\Omega})$ due to the construction.  Moreover,  we assume that~$a(x,y)$ vanishes outside~$3\Omega$.

\begin{lemma}[Behavior of the competitor, nonlocal]\label{lem:beh_com}
Let~$u_C$ be as in Definition~\ref{def:com}. For any~$y,z$ it holds

  \begin{align*}
    \abs{u_C(y)-u_C(z)}&\lesssim \indicator_{\set{\abs{y-z}\ge\frac 14 \abs{y_d}+\abs{z_d}}} (\indicator_{\mathcal{M}_4}(y)+\indicator_{\mathcal{M}_4}(z))\\&+\indicator_{\set{\abs{y-z}\le\frac 14 \abs{y_d}+\abs{z_d}}} \indicator_{\mathcal{M}_8}(y)\indicator_{\mathcal{M}_8 }(z)\frac{\abs{y-z}}{\abs{y_d}+\abs{z_d}}.
\end{align*}
\end{lemma}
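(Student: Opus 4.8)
The plan is to analyze the difference $u_C(y)-u_C(z)$ by a case distinction driven by the geometry of the support of $\rho$ and by the relative size of $\abs{y-z}$ compared to the ``vertical scale'' $\abs{y_d}+\abs{z_d}$. Recall from Definition~\ref{def:com} that $u_C(x)=\frac12\sgn(x_d)\rho(x)$, where $\rho$ is supported in $\set{\dis(\overline{x},\frC^{d-1}_\lambda)\le 4\abs{x_d}}$, which is precisely (a cone-like thickening of) $\mathcal{M}_4$, and $\rho\equiv 1$ on $\mathcal{M}_2$. The first step is the trivial bound $\abs{u_C}\le\frac12\indicator_{\mathcal{M}_4}$ pointwise, so whenever at least one of $y,z$ lies outside $\mathcal{M}_8$ we can already estimate crudely; the real content is the refined estimate when both points are ``close'' and both sit inside the cone region.

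The main case split I would carry out is the following. \emph{Case 1: $\abs{y-z}\ge\frac14(\abs{y_d}+\abs{z_d})$.} Here I simply use $\abs{u_C(y)-u_C(z)}\le\abs{u_C(y)}+\abs{u_C(z)}\le\frac12(\indicator_{\mathcal{M}_4}(y)+\indicator_{\mathcal{M}_4}(z))$, which is exactly the first term on the right-hand side (the indicator $\indicator_{\set{\abs{y-z}\ge\frac14\abs{y_d}+\abs{z_d}}}$ being automatically present). \emph{Case 2: $\abs{y-z}<\frac14(\abs{y_d}+\abs{z_d})$.} First I would show that in this regime, if one of the points — say $y$ — lies outside $\mathcal{M}_8$ (i.e. $\dis(\overline{y},\frC^{d-1}_\lambda)>8\abs{y_d}$), then so does the other lie outside $\mathcal{M}_4$, so that $u_C(y)=u_C(z)=0$ and there is nothing to prove; this is a short triangle-inequality argument using that $\abs{y-z}$ controls both $\abs{\overline{y}-\overline{z}}$ and $\abs{y_d-z_d}$, hence $\abs{y_d}$ and $\abs{z_d}$ are comparable. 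Consequently the difference is nonzero only when $y,z\in\mathcal{M}_8$, which produces the indicator product $\indicator_{\mathcal{M}_8}(y)\indicator_{\mathcal{M}_8}(z)$. A further sub-split is needed according to whether $y$ and $z$ are on the same side of $\set{x_d=0}$: if $y_d$ and $z_d$ have opposite signs then — again because $\abs{y-z}<\frac14(\abs{y_d}+\abs{z_d})$ forces $\abs{y_d-z_d}<\frac14(\abs{y_d}+\abs{z_d})$ — one of $\abs{y_d},\abs{z_d}$ is forced to be small relative to the other, and one checks that in fact this sub-case is empty or again gives $u_C=0$ on one of the points (the ``crossing'' of the zero set is prevented by the cone aperture). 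So we may assume $y_d,z_d$ have the same sign; then $\sgn(y_d)=\sgn(z_d)$ and $\abs{u_C(y)-u_C(z)}=\frac12\abs{\rho(y)-\rho(z)}$.

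For this last quantity I would invoke the mean value theorem along the segment $[y,z]$ together with the gradient bound (ii) in Definition~\ref{def:com}, namely $\abs{\nabla\rho}\lesssim\abs{x_d}^{-1}$ on its support. Since on the segment the $d$-th coordinate is comparable to $\abs{y_d}+\abs{z_d}$ (here the condition $\abs{y-z}<\frac14(\abs{y_d}+\abs{z_d})$ is exactly what keeps the whole segment away from $\set{x_d=0}$ at distance $\gtrsim\abs{y_d}+\abs{z_d}$), we get $\abs{\rho(y)-\rho(z)}\lesssim\abs{y-z}\sup_{[y,z]}\abs{\nabla\rho}\lesssim\frac{\abs{y-z}}{\abs{y_d}+\abs{z_d}}$, which is the second term on the right-hand side. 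Collecting Cases 1 and 2 gives the claimed inequality.

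\textbf{Main obstacle.} The routine part is the mean-value estimate; the delicate part is the bookkeeping in Case 2 — verifying that under $\abs{y-z}<\frac14(\abs{y_d}+\abs{z_d})$ the two points have comparable vertical coordinates, the same sign of $x_d$ (up to the trivial cases), and that the connecting segment stays uniformly inside the region where the gradient bound for $\rho$ is valid and where $\abs{x_d}\sim\abs{y_d}+\abs{z_d}$. The precise choice of the constant $\frac14$ (versus the cone apertures $2$, $4$, $8$ appearing in $\mathcal{M}_2,\mathcal{M}_4,\mathcal{M}_8$) is what makes all these containments hold simultaneously, and checking the numerology carefully is the one place where one must be attentive rather than formulaic.
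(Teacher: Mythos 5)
Your proposal is essentially the paper's own argument: a case split at $\abs{y-z}\gtrless\frac14(\abs{y_d}+\abs{z_d})$, the crude bound $\abs{u_C}\le\frac12\indicator_{\mathcal{M}_4}$ in the far regime, a triangle-inequality check that both points land in $\mathcal{M}_8$ in the near regime, and then the fundamental-theorem-of-calculus estimate along the segment using $\abs{\nabla u_C}\lesssim\abs{x_d}^{-1}$. If anything you are slightly more careful than the paper: you explicitly note that $\abs{y-z}<\frac14(\abs{y_d}+\abs{z_d})$ forces $\sgn(y_d)=\sgn(z_d)$ (so the segment never crosses $\{x_d=0\}$ where $\sgn$ jumps), a point the paper uses silently, and you phrase the containment step as a contrapositive ($y\notin\mathcal{M}_8\Rightarrow z\notin\mathcal{M}_4$) rather than the paper's direct implication ($y\in\mathcal{M}_4\Rightarrow z\in\mathcal{M}_8$) — these are equivalent and the numerology checks out either way.
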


\begin{proof}
 If~$y,z\in \mathcal{M}_4^\complement$, then by  construction~$u_C(y)=u_C(z)=0$ and the claim follows.    It remains to consider the case when~$y\in \mathcal{M}_4$ or~$z\in \mathcal{M}_4$. 
 
 By symmetry we can assume in the following that~$\abs{y_d}\ge\abs{z_d}$.  Hence we have $y\in\mathcal{M}_4$. If~$\abs{y-z}\ge \frac 14(\abs{y_d}+\abs{z_d})$ then the claim follows from~$\abs{u_C(y)-u_C(z)}\le 1$. Thus it suffices to consider the case~$\abs{y-z}< \frac 14(\abs{y_d}+\abs{z_d})$. In this case~$z\in \mathcal{M}_8$. Indeed we have 

\begin{align*}
    \dis(\overline z,\mathcal{C})\le \dis(\overline y, \mathcal C)+\abs{\overline y-\overline z}\le 4\abs{y_d}+ \frac 14 (\abs{y_d}+\abs{z_d})\\
    \le  \frac{17}{4}\abs{\overline z}+\frac 14 \abs{z_d}\le 8|z_d|. 
\end{align*}
Now let~$\abs{y-z}\le \frac 14(\abs{y_d}+\abs{z_d})$ and~$y,z\in\mathcal{M}_8$. It holds
    \begin{align*}
      \abs{u_C(y)-u_C(z)}&\le \biggabs{\int_0^1 \frac{d}{ds}u_C(y-s(y-z))\,ds}
      \\
                     &\leq \int_0^1\abs{\nabla u_C(z-s(y-z))}\abs{y-z}\, ds
      \\
                     &\leq \int_0^1\frac{c_1}{\abs{z-s(y-z)}}\,ds \, \abs{y-z}\leq \frac{2c_1\abs{y-z}}{\abs{y_d}+\abs{z_d}},
    \end{align*}
 where we used the estimate for~$\nabla u_C$ from~\eqref{subgrad}.   
 Now we combine all the cases to get the claim.
\end{proof}

The next step is to study how the energy~$\mathcal{J}^t_{q,a}$ behaves on smooth functions. In particular, we will show that for~$v\in C^0(\overline \Omega)$ with~$v=u_D$ on~$\Omega^\complement$ the~$t,q$-energy is positive $\mathcal{J}^t_{q,a}(v)\ge c_0>0$. Without  loss of generality we can assume that the function~$v$ is odd with respect to~$y_d$ and~$v(\overline y,0)=0$.

\begin{definition} We define the following sets 
 \begin{align*}
 K&:=\set{y\in \RRd: \abs{\overline{y}}\le \tfrac  14 y_d,\, 0<y_d\le 2}. 
\\
 K_j&:=\set{y\in K:  2^{-j}\le y_d\le 2^{-j+1}}. 
 \end{align*}

\end{definition}

\begin{lemma}\label{lem:riesz}
 For~$v\in \mathcal{C}^0([-2,2]^d)$  and~$x=(\overline x,0)$ there holds 
 \begin{align*}
  \abs{v(x)-\mean{v}_{x+K_0}}\le c\int_{x+K}\int_{x+K}  \frac{\abs{v(y)-v(z)}}{(\abs{y_d}+\abs{z_d})^{2d}} \indicator_{ \set{\abs{y-z}\le \abs{y_d}+\abs{z_d}}} \, dydz, 
 \end{align*}
where~$c=c(d)$.
\end{lemma}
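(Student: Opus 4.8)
The plan is to realize $v(x) - \mean{v}_{x+K_0}$ as a telescoping sum over the dyadic pieces $x+K_j$ and control each increment by the double integral. Fix $x = (\overline x, 0)$ and write $A_j := \mean{v}_{x+K_j}$. Since $v$ is continuous on $[-2,2]^d$, $A_j \to v(x)$ as $j \to \infty$ (the sets $x+K_j$ shrink to $x$ and have bounded eccentricity). Hence
\begin{align*}
 v(x) - A_0 = \sum_{j=0}^{\infty} (A_{j+1} - A_j),
\end{align*}
and the task reduces to estimating $\abs{A_{j+1}-A_j}$. The key geometric observation is that $x+K_j$ and $x+K_{j+1}$ are comparable cubes (diameter $\sim 2^{-j}$), they are "dyadically adjacent", and for any $y \in x+K_j$, $z \in x+K_{j+1}$ one has $\abs{y_d}+\abs{z_d} \sim 2^{-j}$ together with $\abs{y-z} \lesssim 2^{-j} \lesssim \abs{y_d}+\abs{z_d}$, so the indicator $\indicator_{\set{\abs{y-z}\le\abs{y_d}+\abs{z_d}}}$ is $\equiv 1$ there (after possibly enlarging $K$ slightly, or noting $K_{j+1}\subset K_j$'s translate, which the cone structure of $K$ guarantees). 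Therefore
\begin{align*}
 \abs{A_{j+1}-A_j}
 &= \biggabs{\mean{\,}_{x+K_{j+1}}\mean{\,}_{x+K_j} \big(v(z)-v(y)\big)\,dy\,dz}
 \\
 &\le \frac{1}{\abs{x+K_j}\,\abs{x+K_{j+1}}} \int_{x+K_{j+1}}\!\!\int_{x+K_j} \abs{v(y)-v(z)}\,dy\,dz.
\end{align*}
Using $\abs{x+K_j} \sim 2^{-jd}$ and $\abs{y_d}+\abs{z_d}\sim 2^{-j}$ on this product set, the prefactor $2^{2jd}$ is comparable to $(\abs{y_d}+\abs{z_d})^{-2d}$, and since $x+K_j, x+K_{j+1} \subset x+K$ we may enlarge the domain of integration to $(x+K)\times(x+K)$ and insert the (identically one) indicator. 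Hence
\begin{align*}
 \abs{A_{j+1}-A_j}
 \lesssim \int_{x+K}\!\!\int_{x+K} \frac{\abs{v(y)-v(z)}}{(\abs{y_d}+\abs{z_d})^{2d}}\,\indicator_{\set{\abs{y-z}\le\abs{y_d}+\abs{z_d}}}\,\indicator_{\set{2^{-j-1}\le y_d,z_d\le 2^{-j+1}}}\,dy\,dz.
\end{align*}
Summing over $j \ge 0$: the dyadic annuli $\set{2^{-j-1}\le y_d \le 2^{-j+1}}$ have bounded overlap (each $y_d$ lies in at most three of them), and similarly for $z_d$, so $\sum_j \indicator_{\set{2^{-j-1}\le y_d,z_d\le 2^{-j+1}}} \lesssim 1$ pointwise. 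This yields the claimed bound with $c=c(d)$.

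The main obstacle — and the only point requiring care — is the matching of the indicator $\indicator_{\set{\abs{y-z}\le \abs{y_d}+\abs{z_d}}}$ on consecutive dyadic pieces: one must check that the cone condition $\abs{\overline y}\le \tfrac14 y_d$ forces $\abs{y-z}$ to be dominated by $\abs{y_d}+\abs{z_d}$ when $y \in x+K_j$, $z\in x+K_{j+1}$. Writing $y = x + (\overline\xi,\eta)$, $z = x+(\overline\zeta,\theta)$ with $\abs{\overline\xi}\le\eta/4$, $\abs{\overline\zeta}\le\theta/4$ and $\eta,\theta \in [2^{-j-1},2^{-j+1}]$, we get $\abs{\overline y-\overline z}\le \abs{\overline\xi}+\abs{\overline\zeta}\le \tfrac14(\eta+\theta)$ and $\abs{y_d-z_d}=\abs{\eta-\theta}\le \eta+\theta$, so $\abs{y-z}\le 2(\eta+\theta)=2(\abs{y_d}+\abs{z_d})$. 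This is off by the harmless factor $2$; absorbing it (e.g. by shrinking to $K_j$ with a slightly steeper cone in an intermediate step, or simply observing the factor $2$ only worsens the constant $c$ and one may rescale $K$ in the statement via the freedom in the $\lesssim$) closes the gap. Everything else is the routine telescoping/bounded-overlap argument above.
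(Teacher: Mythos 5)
Your proof follows the paper's approach essentially verbatim: telescope the mean over dyadic cone slices $x+K_j$, bound each increment $\abs{\mean{v}_{x+K_{j+1}}-\mean{v}_{x+K_j}}$ by a double average, convert averages to integrals via $\abs{K_j}^{-1}\abs{K_{j+1}}^{-1}\eqsim (\abs{y_d}+\abs{z_d})^{-2d}$, and sum using bounded overlap of the annuli. You are in fact slightly more careful than the paper's own sketch, which never explicitly re-inserts the indicator $\indicator_{\set{\abs{y-z}\le\abs{y_d}+\abs{z_d}}}$ into the final display; your verification that the cone constraints $\abs{\overline\xi}\le\eta/4$, $\abs{\overline\zeta}\le\theta/4$ with $y\in x+K_j$, $z\in x+K_{j+1}$ force $\abs{y-z}\lesssim\abs{y_d}+\abs{z_d}$ is exactly the point that makes the stated estimate (with the indicator) legitimate, and if you keep the tighter ranges $\eta\in[2^{-j},2^{-j+1}]$, $\theta\in[2^{-j-1},2^{-j}]$ the factor $2$ you worried about disappears, since $\eta\le 4\theta$ gives $\tfrac14(\eta+\theta)+(\eta-\theta)\le\eta+\theta$.
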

\begin{proof}
  Using the continuity of~$v$ we have~$\lim_{j\to \infty}\mean{v}_{x+K_j}= v(x)$ and therefore
 \begin{align*}
  \abs{v(x)-\mean{v}_{x+K_0}}&=\lim_{j\to \infty } \abs{\mean{v}_{x+K_{j}}-\mean{v}_{x+K_0}}\\
  &\le\sum_{j\ge 0} \abs{\mean{v}_{x+K_{j+1}}-\mean{v}_{x+K_j}}\\ &\le \sum_{j\ge 0} \dashint_{x+K_{j+1}}\dashint_{x+K_{j}} \abs{v(y)-v(z)}\,dydz. 
 \end{align*}
Now if~$y\in x+K_{j+1}$ and~$z\in x+K_{j}$ then
\begin{align*}
    \abs{K_{j+1}}^{-1}\abs{K_j}^{-1}=2^{-2jd}\eqsim (\abs{y_d}+\abs{z_d})^{2d}.
\end{align*}
Therefore we obtain
 \begin{align*}
  \abs{v(x)-\mean{v}_{x+K_0}}&
  \lesssim\sum_{j\ge 0}\int_{x+K_j}\int_{x+K_j}\frac{\abs{v(y)-v(z)}}{(\abs{y_d}+\abs{z^d})^{2d}}dydz\\
  &\lesssim  \int_{x+K}\int_{x+K}  \frac{\abs{v(y)-v(z)}}{(\abs{y_d}+\abs{z_d})^{2d}} \, dydz.  
  \end{align*}
This proves the claim. 
\end{proof}

\begin{lemma}\label{lem:Lavr}
 For~$u_C$ defined in the definition~\ref{def:com} and~$\frD$ being the dimension of the fractal barrier set defined as $\frD := \dim(\frC_\lambda^m) = -m \log(2)/\log(\lambda)$ it holds:
  \begin{enumerate}
  \item  \label{i:a} If $s-\frac{d-\frD}{p}<0$, then  $\mathcal{J}^s_p(u_C)<\infty$. 
  \item \label{i:b} $\mathcal{J}^t_{q,a}(u_C)=0$.
  \item  \label{i:c} If $t-\frac{d-\frD}{q}+\frac{\alpha}{q}>0$, then   $\mathcal{J}^t_{q,a}(v)\ge c_0>0$ for all~$v\in C^0(\overline{\Omega})$ with~$v= u_D$ on~$ \Omega^{\complement}$.
 \end{enumerate}
 Properties~\ref{i:a} and~\ref{i:b}  ensures that so the competitor function~$u_C$ defined in \eqref{def:com} belongs to the energy space~$W_{u_D}$. 
\end{lemma}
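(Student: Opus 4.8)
The plan is to handle the three parts separately, using the geometric description of $u_C$ from Definition~\ref{def:com} together with the Cantor estimates of Lemma~\ref{lem:cantor-estimates}, Lemma~\ref{lem:basic-integrals} and Corollary~\ref{cor:cor}, and the averaging estimate of Lemma~\ref{lem:riesz}. For part~\ref{i:b} I would argue first, since it is the quickest: by construction the supports of $\rho^\pm$ are contained in the cones $\set{\dis(\overline x,\frC^{d-1}_\lambda)\le 2\abs{x_d}}$, while $u_C$ is supported in (and varies only across) the region $\set{2\abs{x_d}\le\dis(\overline x,\frC^{d-1}_\lambda)\le 4\abs{x_d}}$. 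Hence whenever $a(y,z)\ne 0$ at least one of $y,z$ lies in a region where $\rho(y)$ resp.\ $\rho(z)$ is locally constant ($=0$ or $=1$ up to the sign of $x_d$); a short case check on the four combinations of signs of $y_d,z_d$ shows $a(y,z)\,\abs{u_C(y)-u_C(z)}^q=0$ pointwise a.e., so $\mathcal J^t_{q,a}(u_C)=0$.

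For part~\ref{i:a} I would feed the nonlocal estimate of Lemma~\ref{lem:beh_com} into the definition of $\mathcal J^s_p(u_C)$. Split the double integral according to whether $\abs{y-z}\ge\frac14(\abs{y_d}+\abs{z_d})$ or not. In the "far" regime the bound $\abs{u_C(y)-u_C(z)}\lesssim\indicator_{\mathcal M_4}(y)+\indicator_{\mathcal M_4}(z)$ reduces matters, after integrating the singular kernel $\abs{y-z}^{-(d+sp)}$ over $\set{\abs{y-z}\gtrsim\abs{y_d}+\abs{z_d}}$, to $\int_\Omega\abs{\hat z}^{-sp}\indicator_{\mathcal M_4}(z)\,dz$ (using $\dim\frS=\frD$ and Lemma~\ref{lem:cantor-estimates}\ref{itm:cantor-estimates3} to absorb the other variable); this is finite by Corollary~\ref{cor:cor} precisely when $-sp>-(d-\frD)$, i.e.\ $s-\frac{d-\frD}{p}<0$. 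In the "near" regime one uses the Lipschitz-type bound $\abs{u_C(y)-u_C(z)}\lesssim\indicator_{\mathcal M_8}(y)\indicator_{\mathcal M_8}(z)\frac{\abs{y-z}}{\abs{y_d}+\abs{z_d}}$, so the integrand is $\lesssim\abs{y-z}^{p-d-sp}(\abs{y_d}+\abs{z_d})^{-p}\indicator_{\mathcal M_8}(y)\indicator_{\mathcal M_8}(z)$; integrating $\abs{y-z}^{p(1-s)-d}$ over the ball $\set{\abs{y-z}\le\abs{y_d}+\abs{z_d}}$ (finite since $p(1-s)>0$) yields $(\abs{y_d}+\abs{z_d})^{p(1-s)}$, leaving $\int\abs{\hat z}^{-sp}\indicator_{\mathcal M_8}(z)\,dz$ again, finite under the same condition.

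For part~\ref{i:c}, the key point is a lower bound that is robust under passing to an arbitrary continuous competitor $v$ with $v=u_D$ on $\Omega^\complement$. Here I would use that $v=u_D=\eta u_C$ outside the support of $\eta$, so on a neighbourhood of $\frS$ within $\Omega^\complement$ there are points (just above and just below the fractal) where $v$ must take values close to $\pm\frac12$; WLOG (as remarked before Definition of $K$) $v$ is odd in $y_d$ with $v(\overline y,0)=0$. Fix $\overline x$ with $\dis(\overline x,\frC^{d-1}_\lambda)$ small; apply Lemma~\ref{lem:riesz} on the cone $x+K$ to bound $\abs{v(x)-\mean v_{x+K_0}}$ — but since $v(x)=0$ and $\mean v_{x+K_0}$ is forced to be away from $0$ for $\overline x$ near enough to $\frC^{d-1}_\lambda$ (because $v$ is continuous and equals $u_D\approx\frac12$ slightly outside $\Omega$), this gives a lower bound for $\int_{x+K}\int_{x+K}\frac{\abs{v(y)-v(z)}}{(\abs{y_d}+\abs{z_d})^{2d}}\indicator_{\set{\abs{y-z}\le\abs{y_d}+\abs{z_d}}}\,dy\,dz$. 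Then integrate this inequality in $\overline x$ over a neighbourhood of $\frC^{d-1}_\lambda$ against the Cantor measure $\mu^{d-1}_\lambda$ (whose total mass is positive), use Lemma~\ref{lem:cantor-estimates}\ref{itm:cantor-estimates1} to compare $\mu^{d-1}_\lambda$-averages with Lebesgue integrals, insert the lower bound $a(y,z)\gtrsim(\abs{y_d}+\abs{z_d})^\alpha$ valid on the relevant cone, and convert $\abs{y-z}^{-(d+tq)}$ to $(\abs{y_d}+\abs{z_d})^{-(d+tq)}$ on $\set{\abs{y-z}\le\abs{y_d}+\abs{z_d}}$ by Hölder's inequality; the exponent count closes exactly when $t-\frac{d-\frD}{q}+\frac{\alpha}{q}>0$.

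The main obstacle is part~\ref{i:c}: one must turn the pointwise oscillation estimate of Lemma~\ref{lem:riesz} into an $L^q$-type lower bound for $\mathcal J^t_{q,a}(v)$ uniformly over all admissible $v$, which requires choosing the right scaling of the cones $x+K_j$ at dyadic scales, summing the contributions against the Cantor measure with the correct power of $\abs{\hat x}$, and checking that the exponent bookkeeping (dimension $\frD$, the $2d$ in the kernel of Lemma~\ref{lem:riesz} versus $d+tq$ in $\mathcal J^t_{q,a}$, the weight power $\alpha$, and the Hölder loss) reproduces precisely the stated threshold $t-\frac{d-\frD}{q}+\frac{\alpha}{q}>0$; the other two parts are essentially bookkeeping on top of Lemma~\ref{lem:beh_com} and Corollary~\ref{cor:cor}.
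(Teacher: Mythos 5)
Your proposal follows essentially the same route as the paper: for part \ref{i:a} the split into the ``far'' regime $\abs{y-z}\gtrsim\abs{y_d}+\abs{z_d}$ and the ``near'' Lipschitz regime via Lemma~\ref{lem:beh_com}, integrating one variable against the singular kernel and reducing to Corollary~\ref{cor:cor}, matches the paper (though the paper does not invoke Lemma~\ref{lem:cantor-estimates}\ref{itm:cantor-estimates3} here, only Corollary~\ref{cor:cor}); for part \ref{i:c} the symmetrization to $v$ odd in $x_d$, Lemma~\ref{lem:riesz}, integration of $\overline{x}$ against $\mu^{d-1}_\lambda$ to produce the factor $(\abs{y_d}+\abs{z_d})^{\frD}$, and then H\"older against the weighted $t,q$-energy is exactly the paper's computation, yielding the same exponent balance $t-\tfrac{d-\frD}{q}+\tfrac{\alpha}{q}>0$.

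The only place to tighten is part \ref{i:b}. You write that when $a(y,z)\ne 0$ ``at least one of $y,z$ lies in a region where $\rho$ is locally constant'', but this alone would not force $u_C(y)-u_C(z)=0$ (one point could sit in the transition annulus). The correct observation, which your ``case check'' would in fact reveal, is that $a(y,z)>0$ requires $\rho^+(y)\rho^+(z)>0$ or $\rho^-(y)\rho^-(z)>0$; since $\support\rho^\pm\subset\set{\dis(\overline{x},\frC^{d-1}_\lambda)\le 2\abs{x_d},\ \pm x_d>0}$ and $\rho\equiv 1$ on that cone, this forces \emph{both} $y$ and $z$ into the same half-cone where $u_C\equiv\pm\tfrac12$, so the integrand vanishes pointwise. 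Also, $u_C$ is supported in $\set{\dis\le 4\abs{x_d}}$, not in the annulus $\set{2\abs{x_d}\le\dis\le4\abs{x_d}}$ (only its \emph{variation} lives there). These are presentation fixes, not gaps; the plan is correct.
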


\begin{proof}

First we use the properties of the competitor function from the Lemma~\ref{lem:beh_com}  to  split the ~$J^s_p(u_C)$ as following

\begin{align*}
  \mathcal{J}_p^s(u_C)&:=   \int_{{3\Omega}} \int_{{3\Omega}} \bigg(\frac{\abs{u_C(y)-u_C(z)}}{\abs{y-z}^s} \bigg)^p\frac{dy\, dz}{\abs{y-z}^d}\\
& \lesssim \int_{{3\Omega}} \int_{{3\Omega}} (\indicator_{\set{\mathcal{M}_4(y)+\mathcal{M}_4(z)}})\indicator_{\abs{y-z}\ge \frac 14 (\abs{y_d}+\abs{z_d})} \bigg(\frac{1}{\abs{y-z}^s} \bigg)^p\frac{dy\, dz}{\abs{y-z}^d}\\
&+ \int_{3\Omega} \int_{3\Omega} (\indicator_{\set{\mathcal{M}_8(y)\mathcal{M}_8(z)}})\indicator_{\set{\abs{y-z}\le \frac 14 (\abs{y_d}+\abs{z_d})}} \frac{\abs{y-z}^p}{(\abs{y_d}+\abs{z_d})^p}\frac{1}{\abs{y-z}^{sp+d}}\, dydz \\
&   :=\mathrm{I}+\mathrm{II}.
\end{align*}
We start with the estimate for~$\mathrm{I}$:
\begin{align*}
 \mathrm{I}= \int_{3\Omega}\int_{3\Omega} (\indicator_{\set{\mathcal{M}_4(y)+\mathcal{M}_4(z)}})\indicator_{\set{\abs{y-z}\ge \frac 14 (\abs{y_d}+\abs{z_d})}} \abs{y-z}^\gamma \,dydz,\quad \gamma=-ps-d.
\end{align*}
Now, using symmetry and Lemma~\ref{lem:cantor-estimates}, we obtain  
\begin{align*}
    \mathrm{I}\lesssim  2  \int_{3\Omega}  \indicator_{\set{\mathcal{M}_4(z)}} \abs{z_d}^{-\gamma+d} dz <\infty,
\end{align*}
provided~$$-\gamma>-2d+\frD,\quad -ps-d>-2d+\frD,$$ which is equivalent to~$$s-\frac{d-\frD}{p}<0.$$ 

We continue with estimating the second term~$\mathrm{II}$. By symmetry we get
\begin{align*}
   \mathrm{II}= &\int_{3\Omega}\int_{3\Omega}\indicator_{\set{\mathcal{M}_8(y)}}\indicator_{\set{\mathcal{M}_8(z)}}\indicator_{\set{\abs{y-z}\le \abs{y_d}+\abs{z_d}}}\abs{y-z}^{p-sp-d}(\abs{y_d}+\abs{z_d})^{-p}\,dydz\\
    &\lesssim \int_{3\Omega} \indicator_{\set{\abs{y_d}\le \abs{z_d}}}\indicator_{\set{\mathcal{M}_8(y)}}\abs{z_d}^{-sp}dz<\infty,
\end{align*}
provided~$-sp-d>-2d+\frD$, so~$s-\frac{d-\frD}{p}<0$.

Next, we have  $\mathcal{J}^t_{q,a}(u_C)=0$ due to the construction of the competitor function.

Now we prove the property (c). For this let us consider a function~$v(x)\in C_0(\overline{{3\Omega}})$.    By symmetrization we can assume that~$v$ is odd in~$x_d$, so~$v(\overline{x}, 0)=0$. Indeed, let 
\begin{align*}
 \hat v(\overline x, x_d):=\frac{v(\hat x, x_d)-v(\hat x,-x_d)}{2}.
\end{align*}
Then~$\hat v$ is odd in~$x_d$ and~$\mathcal{J}(\hat v)\le \mathcal{J}(v)$ by convexity and symmetry of~$\mathcal{J}$. Hence it suffices to prove the claim for~$v$ odd in~$x$. In particular, we have~$v(\hat x, 0)=0$.
Now we use Lemma~\ref{lem:riesz}
  \begin{align*}
   0&<\frac 12 \int_{\frC} d\mu(x)= \int_{\frC} \abs{v(\overline{x},0)-\mean{v}}d\mu(x) \le \int_{\frC}   \int_{(\overline{x},0)+K}\int_{(\overline{x},0)+K} \frac{\abs{v(y)-v(z)}}{(\abs{y_d}+\abs{z_d})^{2d}}d\overline{x} dydz\\
   &\le \int_{\mathcal{M}_4^+}\int_{\mathcal{M}_4^+} \frac{\abs{v(y)-v(z)}}{(\abs{y_d}+\abs{z_d})^{2d}} dydz \int_{\frC}  \indicator_{\set{y\in(\overline x,0)+K,z\in(\overline x,0)+K }}d\overline x:=\mathrm{III},
   \end{align*}
and by the definition of the set~$K$ we obtain

\begin{align*}
 \int_{\frC}  \indicator_{\set{y\in(\overline x,0)+K,z\in(\overline x,0)+K }}d\overline x\lesssim \indicator_{\set{\abs{\overline y-\overline z}\le\frac 14 (\abs{y_d}+\abs{z_d})}} (\abs{y_d}+\abs{z_d})^\frD,
\end{align*}
and therefore
\begin{align*}
\mathrm{III}\lesssim   \int_{\mathcal{M}_4^+}\int_{\mathcal{M}_4^+} \frac{\abs{v(y)-v(z)}}{(\abs{y_d}+\abs{z_d})^{2d-\frD}}\indicator_{\set{\abs{\overline y-\overline z}\le\frac 14 (\abs{y_d}+\abs{z_d})}} dydz.
\end{align*}
With Hölder's inequality we estimate
\begin{align*}
    \mathrm{III}&\le \mathrm{I}\cdot \mathrm{II},
\end{align*}
where
\begin{align*}
   \mathrm{I}&:=  \bigg(\int_{\mathcal{M}^+_4} \int_{\mathcal{M}^+_4}  (\abs{y_d}+\abs{z_d})^\alpha\bigg(\frac{\abs{v(y)-v(z)}}{\abs{y-z}^t}\bigg)^q\frac 1{\abs{y-z}^d}dydz\bigg)^\frac 1 q ,\\  
 \mathrm{II}&:=  \bigg(\int_{\mathcal{M}^+_4}\int_{\mathcal{M}^+_4} (\abs{y_d}+\abs{z_d})^{\alpha(q'-1)+(-2d+\frD)q' }\abs{y-z}^{tq'+dq'-d}\indicator_{\set{\abs{y-z}\le   \frac14(\abs{y_d}+\abs{z_d})}} dy dz \bigg)^\frac 1 {q'}\\
 &=\bigg(\int_{\mathcal{M}^+_4}\int_{\mathcal{M}^+_4} (\abs{y_d}+\abs{z_d})^{\gamma_1}\abs{y-z}^{\gamma_2}\indicator_{\set{\abs{y-z}\le \frac14(\abs{y_d}+\abs{z_d})}} dy dz \bigg)^\frac 1 {q'}
\end{align*}
with~$\gamma_1=\alpha(q'-1)-2dq'+\frD q'$,~$\gamma_2=tq'+dq'-d>0$.  Now by symmetry
\begin{align*}
    &\int_{\mathcal{M}^+_4}\int_{\mathcal{M}^+_4} (\abs{y_d}+\abs{z_d})^{\gamma_1}\abs{y-z}^{\gamma_2}\indicator_{\set{\abs{y-z}\le \frac14(\abs{y_d}+\abs{z_d})}} dy dz \\
   &\lesssim \int_{\mathcal{M}^+_4}\int_{\mathcal{M}^+_4} \abs{z_d}^{\gamma_1}\indicator_{\set{\abs{y_d}\le \abs{z_d}}}\abs{y-z}^{\gamma_2}\indicator_{\set{\abs{y-z}\le \frac14(\abs{y_d}+\abs{z_d})}} dy dz \\
    &\lesssim \int_{\mathcal{M}^+_4} \abs{z_d}^{\gamma_1+d}\abs{z_d}^{\gamma_2} dz<\infty 
\end{align*}
provided~$\gamma_1+\gamma_2>-2d+\frD$ due to Corollary~\ref{cor:cor}. So $$\alpha(q'-1)-2dq'+\frD q'+tq'+dq'-d>-2d+\frD$$ and therefore~$$t-\frac{d-\frD}{q}+\frac{\alpha}{q}>0,$$ which completes the proof.
\end{proof}

With the help of the Lemma~\ref{lem:Lavr} we are ready now to complete the proof of the main result of this section.  

\begin{proof}[Proof of Theorem \ref{thm:sub}]

We have:
\begin{enumerate}
  \item If $s-\frac{d-\frD}{p}<0$, then  $\mathcal{J}^s_p(u_C)<\infty$, so the competitor function~$u$ defined in \eqref{def:com} belongs to the energy space~$W_{u_D}$.
  \item $\mathcal{J}^t_{q,a}(u_C)=0$.
  \item  If $t-\frac{d-\frD}{q}+\frac{\alpha}{q}>0$, then   $\mathcal{J}^t_{q,a}(v)\ge c_0>0$ for all~$v\in C^0(\overline{\Omega})$  with~$v= u_D$ on~$ \Omega^{\complement}$.
 \end{enumerate}

 Now, we use the argument with the scaling of the weight coefficient~$a(x,y)$. Let~$\nu>0$ be a free parameter  and define the energy with the scaled weight~$\nu a$ 
 \begin{align*}
    \mathcal{J}_\nu(v):= \mathcal{J}^s_p(v)+\mathcal{J}^t_{q,\nu a}.
 \end{align*}
Then
 \begin{align*}
    \mathcal{J}(v)\ge \mathcal{J}^t_{q,\nu a}(v)=\nu \mathcal{J}^t_{q,a}(v)\ge \nu c_0>0,
 \end{align*}
since~$v=u_D$ on~$ \Omega^{\complement}$. From the other side choose~$\nu$ such that
 \begin{align*}
    \mathcal{J}^s_p(u_C)\le \nu c_0-1.
 \end{align*}
Then
 
 \begin{align*}
\mathcal{J}^s_p(u_C)\le \nu c_0-1\le \mathcal{J}^s_p(v)-1,
 \end{align*}
so there Lavrentiev phenomenon occurs for sufficiently large value of~$\nu$. 

Combination of properties (a) and (c) gives the sufficient condition on the parameters of the model:

\begin{align*}
    s-\frac{d-\frD}{p}< t-\frac{d-\frD}{q}+\frac{\alpha}{q},
\end{align*}
and since~$\frD\in (0,1)$ could be chosen arbitrary, that is equivalent to 
\begin{align*}
 tq>sp+\alpha.
\end{align*}
\end{proof}

We now  provide a straightforward embedding interpretation of the condition 

$$tq>sp+\alpha$$
we have to determine the existence of a Lavrentiev gap. Let~$\mathcal{C}=\mathcal{C}_\lambda$ be the Cantor necklace with the Hausdorff dimension~$$\frD:=\frac{\log(2)}{ \log(\lambda^{-1})}.$$ For the subcritical case it holds
\begin{align*}
 H^{s,p}(\Omega) \not\embedding L^p(\mathcal{C}),
\end{align*}
that is
\begin{align}\label{eq:1}
 \ind(H^{s,p}(\Omega))=s-\frac p d < \ind( L^p(\mathcal{C}))=-\frac{\frD}{p}.
\end{align}

From the other side
\begin{align*}
 H^{t,q}_a(\Omega) \embedding L^q(\mathcal{C}),
\end{align*}
that is
\begin{align}\label{eq:2}
 \ind(H^{t,q}_a(\Omega))=t-\frac q d-\frac \alpha d \ge \ind( L^q(\mathcal{C}))=-\frac{\frD}{q}.
\end{align}
The combination of~\eqref{eq:1} and~\eqref{eq:2} and the choice of~$\frD$ give the condition~$tq>sp+\alpha$. 

\begin{remark}
The critical case~$s-\frac d p=0$ is treated in the similar way. The geometrical configuration in this case corresponds to one saddle point set up (see \textcite{Zhi95}; \textcite{EspLeoMin04} for the local case). The Lavrentiev gap condition depends on the space dimension in this case and takes the form~$sp+\alpha<d<tq$.
\end{remark}

\begin{remark}
 It has been shown by \textcite{BOS22} that, if~$1<p\le q<\infty$,~$0<s\le t<1$ and~$tq\le sp+\alpha$, then every weak solution of the nonlocal double phase problem~\textbf{IV} in the subcritical case~$s-\frac d p<0$ is locally Hölder continuous. Thus, we can compare our condition 
for the Lavrentiev gap with  for the absence of the Lavrentiev gap. This shows that our condition~$tq>sp+\alpha$ is sharp in the subcritical case.

\end{remark}

\section{Supercritical case}\label{sec:super}

In this section we consider the supercritical case~~$\ind(W^{s,p})=s-\frac{d}{p}>0$. The main result is formulated in the following theorem. 
\begin{theorem}\label{thm:super}
 Let $p,q>1$,~$s-\frac d p >0$,~$\alpha\in [0,1]$, and 
 \begin{align*}
\frac{p}{p-1}\bigg(s-\frac d p\bigg)< \frac{q}{q-1}\bigg(t-\frac {d+\alpha}{q}\bigg).
 \end{align*}
Then there exists an integrand  with non negative, bounded  weight ~$a\in C^{\alpha}(\overline{\Omega}\times \overline{\Omega})$ such that Lavrentiev gap occurs.  The choice~$t=1$ or~$s=1$ corresponds to the mixed non-local model result.  
\end{theorem}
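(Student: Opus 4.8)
The plan is to mirror the structure of the subcritical argument in Section~\ref{sec:sub}, but to exchange the roles of the competitor's singularity and the weight's degeneracy so that the competitor becomes \emph{continuous} (its index is positive), while the real obstruction lives in the dual exponents. First I would introduce the appropriate competitor. Since $s-\frac dp>0$ the function should now be a Hölder-continuous ``tent'' that is constant (say $\pm\frac12$) away from the fractal barrier $\frS=\frC^{d-1}_\lambda\times\{0\}$ and interpolates through $0$ on $\frS$; concretely $u_C(x)=\frac12\,\mathrm{sgn}(x_d)\min\{1,\psi(x)\}$ where $\psi$ is comparable to $\dis(\overline x,\frC^{d-1}_\lambda)/\abs{x_d}$ on the transition region, so that $u_C$ is genuinely continuous across $\frS$ and $\abs{u_C(y)-u_C(z)}$ enjoys a Lipschitz-type bound scaled by $(\abs{y_d}+\abs{z_d})^{-1}$ on the relevant cone region and is bounded by $1$ otherwise (the analogue of Lemma~\ref{lem:beh_com}). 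Then I would define $a(y,z)$ exactly as before — $a=(\rho^+(y)\rho^+(z)+\rho^-(y)\rho^-(z))(\abs{y_d}^\alpha+\abs{z_d}^\alpha)$ — so that $\mathcal{J}^t_{q,a}(u_C)=0$ by construction (the weight vanishes where $u_C$ varies).

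Next I would prove the three structural properties, adapted to this regime: (a) $\mathcal{J}^s_p(u_C)<\infty$ whenever the competitor's contribution converges, which by the same splitting into a ``far'' term $\mathrm{I}$ and a ``near'' term $\mathrm{II}$ and the same use of Lemma~\ref{lem:cantor-estimates} and Corollary~\ref{cor:cor} will hold under a condition of the form $s-\frac{d-\frD}{p}>0$ (i.e.\ $sp>d-\frD$) — this is where supercriticality is used, and it is automatic here; (b) $\mathcal{J}^t_{q,a}(u_C)=0$; and (c) a lower bound $\mathcal{J}^t_{q,a}(v)\ge c_0>0$ for every $v\in C^0(\overline\Omega)$ with $v=u_D$ on $\Omega^\complement$. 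For (c) I would again symmetrize $v$ to be odd in $x_d$, integrate over the Cantor measure $\mu$ supported on $\frS$, use the Riesz-potential-type estimate of Lemma~\ref{lem:riesz} to bound $\abs{v(x)-\mean{v}_{x+K_0}}$ by a double integral of $\abs{v(y)-v(z)}$ against $(\abs{y_d}+\abs{z_d})^{-2d}$ over $x+K$, and then — exactly as in Lemma~\ref{lem:Lavr}(c) — apply Hölder's inequality with the $a$-weighted $(t,q)$-seminorm as one factor, leaving a finiteness requirement on the dual factor. Carrying the exponents through, the integrability of the dual factor via Corollary~\ref{cor:cor} will translate into the condition $\frac{q}{q-1}(t-\frac{d+\alpha-\frD}{q})>0$, or after absorbing $\frD$, the bound stated in the theorem once combined with (a).

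Finally I would combine the pieces: property (a) gives $\mathcal{J}^s_p(u_C)<\infty$, property (b) gives $\mathcal{J}(u_C)=\mathcal{J}^s_p(u_C)<\infty$ so $u_C\in W_{u_D}$, and property (c) gives $\inf\mathcal{J}(H_{u_D})\ge c_0>0$ after scaling the weight $a\mapsto \nu a$ with $\nu$ large (so that $\nu c_0>\mathcal{J}^s_p(u_C)$), which produces the strict inequality $\inf\mathcal{J}(W_{u_D})<\inf\mathcal{J}(H_{u_D})$. The sufficient parameter condition then reads $s-\frac{d-\frD}{p}$ on the ``$W$'' side versus $t-\frac{d+\alpha-\frD}{q}$ on the ``$H$'' side, both in dual form; since $\frD\in(0,1)$ is a free parameter, optimizing over $\frD$ (letting $\frD\to 1$ in the right places, or rather matching the two thresholds through the Cantor dimension as in the embedding heuristic $H^{s,p}\not\embedding L^p(\mathcal C)$ and $H^{t,q}_a\embedding L^q(\mathcal C)$) collapses the pair of conditions to the single stated inequality $\frac{p}{p-1}(s-\frac dp)<\frac{q}{q-1}(t-\frac{d+\alpha}{q})$. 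The main obstacle I anticipate is step (c): in the supercritical regime the naive far-field bound $\abs{u_C(y)-u_C(z)}\le 1$ is too lossy, so one must track carefully the duality/Hölder bookkeeping with the degenerate weight $(\abs{y_d}+\abs{z_d})^\alpha$ and verify that the dual exponent $q'=\frac{q}{q-1}$ combines with the Cantor-set packing estimates to give a \emph{convergent} integral precisely under the sharp bound — this is the nonlocal replacement for the duality argument that was available only in the local case, and getting the exponent arithmetic to land on $\frac{q}{q-1}(t-\frac{d+\alpha}{q})$ rather than something weaker is the crux.
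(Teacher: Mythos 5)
Your proposed route does not match the paper's and, more importantly, would not work: the gap is at the level of the competitor's geometry, not just bookkeeping in step~(c).

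You propose to keep the subcritical geometry (the $(d-1)$-dimensional Cantor set $\frC^{d-1}_\lambda\times\{0\}$ in the $\overline{x}$-hyperplane, with the competitor transitioning across $x_d=0$) and merely make the competitor continuous by a ``tent'' modification, claiming that finiteness of $\mathcal{J}^s_p(u_C)$ then reduces to $s-\tfrac{d-\frD}{p}>0$, which you call automatic under supercriticality. This inequality is in fact exactly the \emph{opposite} of what the subcritical construction needs: in Lemma~\ref{lem:Lavr}(a) the finiteness condition is $s-\tfrac{d-\frD}{p}<0$, i.e.\ $sp<d-\frD$. Since $\frD\in(0,1)$ and $sp>d$ in the supercritical regime, that condition can never be met, so a competitor with a jump (or near-jump) across $x_d=0$ concentrated on a $\frD$-dimensional set always has infinite $W^{s,p}$-energy here. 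Your tent $u_C(x)=\tfrac12\sgn(x_d)\min\{1,\psi(x)\}$ with $\psi\sim\dis(\overline x,\frC)/\abs{x_d}$ still has a genuine discontinuity across $\{x_d=0\}$ along almost every approach to $\frS$ (the limit along $\dis(\overline x,\frC)=c\abs{x_d}$ depends on $c$), so the energy still blows up.

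The paper instead rotates the geometry. The supercritical competitor of Definition~\ref{def:com_super} is $u_C=(\delta_0^{d-1}\times\mu_\lambda)*u_d$, a convolution with the \emph{one}-dimensional Cantor measure in the $x_d$-direction. Its gradient (estimate \eqref{supergrad}) is singular like $\abs{\overline x}^{\frD-1}$ near the ``Cantor necklace'' $\mathcal{N}=\bigcup_{l,j}\mathcal{N}_{l,j}$ of diamonds around the $x_d$-axis, not near a flat jump set. The finiteness condition in Lemma~\ref{lem:com_super} is $p<\tfrac{d-\frD}{s-\frD}$, equivalently $\tfrac{p}{p-1}\bigl(s-\tfrac dp\bigr)<\frD$, which is where the dual exponent $p'=\tfrac{p}{p-1}$ actually enters; your account never produces this factor and hands it off to an unexplained ``optimization over $\frD$.'' The weight is also different: it is built from $\abs{\overline x}^\alpha$ and the diamond indicators $\rho_j=\indicator_{\mathcal{N}_{j,l}}\rho_a$, not from $\abs{x_d}^\alpha$ and $\rho^\pm$ as you write. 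Finally, step~(c) does not reuse Lemma~\ref{lem:riesz}: it requires the new chain-of-balls estimate Lemma~\ref{lem:superRiesz}, which bounds the oscillation $\abs{v(x^+_{l,j})-v(x^-_{l,j})}$ across a single diamond by a weighted double integral of $\abs{v(y)-v(z)}$ over that diamond, together with the lower bound $1\lesssim\sum_{l}\sum_{j}\abs{v(x^+_{l,j})-v(x^-_{l,j})}$. Hölder's inequality on that estimate is what yields $\tfrac{q}{q-1}\bigl(t-\tfrac{d+\alpha}{q}\bigr)>\frD$, and combining the two thresholds gives the stated inequality. So the crux is not, as you anticipate, the Hölder bookkeeping in~(c) within the old framework; it is that a new competitor, a new weight, and a new Riesz-type lemma are all needed.
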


To prove the Theorem~\ref{thm:super} we first construct the competitor function~$u\in W\setminus H$. For the supercritical case it is defined as following.

\begin{definition}[\cite{BalDieSur20},\,  Competitor~$u_C$]
  \label{def:com_super}
  Let $\Omega := (-1,1)^d\times(-1,1)^d$ with~$d\geq 2$.  Let $$\frS := \frC^{d-1}_\lambda \times \set{0}$$ and $$\frD= \dim(\frS)=\frac{\log 2}{\log(1/\lambda)},$$ where~$\lambda \in (0,\frac 12)$ and~$\mu_\lambda$ be the Cantor measure defined in the Section~\ref{sec:Cantor}.
    We define the competitor function 
    \begin{align*}
      u_C(x) &:= (\delta_0^{d-1}\times \mu_\lambda) *u_d. 
      \end{align*}

\end{definition}

In the Proposition 14, \cite{BalDieSur20} the following estimate for the gradient of the competitor function~$u$ was obtained:  
\begin{align}\label{supergrad}
  \abs{\nabla u_C} \lesssim \abs{\overline x}^{\frD-1} \indicator_{
                     \set{\dis({x}_d,\frC_\lambda) \leq \frac 12  \abs{\overline x}
                     }
                     }.
\end{align}

We also need to work with separate elements of the Cantor necklace~$\mathcal{N}$. 
For each removed $j$-th interval of the generation $l$ we denote the value of $v$ in its upper point by $v_{l,j,+}$ and the value of $v$ in its lower point by $v_{l,j,-}$, and the corresponding element of the Cantor necklace by $\mathcal{N}_{l,j}$. Note, that~$\mathcal{M}^\complement=\mathcal{N}$.

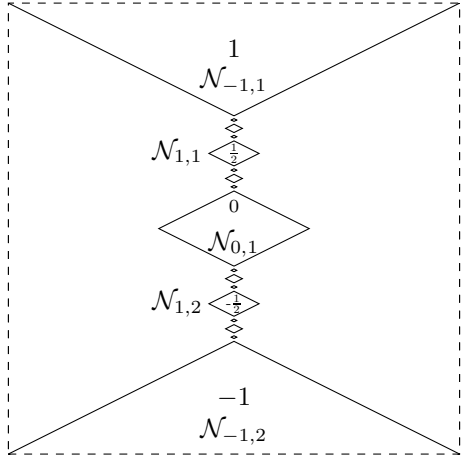
\begin{figure}[ht!]\centering
  
  \begin{tikzpicture}[scale=3]
    %% draw outer box
    \draw[dashed] (-1,-1) -- (-1,+1) -- (+1,+1) -- (+1,-1) -- cycle;

    \clip (-1,-1) rectangle (1,1);
    
    %\fill[pattern=vertical lines] (-1,-1) rectangle
    %(1,1);
    
    \vfullpattern{%1
      \filldraw[white] (0,-1/6) -- (1/3,0) -- (0,1/6) -- (-1/3,0) -- (0,-1/6); 
      \draw  (0,-1/6) -- (1/3,0) -- (0,1/6) -- (-1/3,0) -- (0,-1/6); 
    }

    %% write values
    \node at (0,-3/4) {$-1$};
   
    \node at (0,-0.90) {$\mathcal{N}_{-1,2}$};
   
   \node at (0,0.80) {$1$};
    \node at (0,0.65) {$\mathcal{N}_{-1,1}$};

    \node at (0,0.1) {$\scriptstyle 0$};

    \node at (0,-0.07)  {$\small{\mathcal{N}_{0,1}}$};
    
    \node at (0,-1/3) {\scalebox{0.6}{$\text{-}\frac 12$}};
    \node at (-1/4 ,-1/3) {$\small{\mathcal{N}_{1,2}}$};
    
    \node at (0,+1/3) {\scalebox{0.6}{$\frac 12$}};

    \node at (-1/4 ,1/3) {$\small{\mathcal{N}_{1,1}}$};
    
    \draw[dashed] (-1,-1) -- (-1,+1) -- (+1,+1) -- (+1,-1) -- cycle;1
       
  \end{tikzpicture} 
  \caption{The competitor~$u$, supercritical case}
\end{figure}

Now we define the transition  weight~$a(x,y)$ for the example on Lavrentiev gap. Due to~\cite{BalDieSur20} there exists~$\rho \in C^\infty(\setR^d\setminus \frC)$ such as 
    \begin{enumerate}
    \item
      $\indicator_{\set{d(x_d,\frC_\lambda)(x) \leq \frac 1 2 \abs{\overline{x}}}}
      \leq \rho_a (x) \leq \indicator_{\set{d(x_d,\frC_\lambda)
          \leq 2 \abs{\overline{x}}}}(x)$.
    \item
      $\abs{\nabla \rho_a(x)} \lesssim \abs{\overline{x}}^{-1}
      \indicator_{\set{\frac 12 \abs{\overline{x}} \leq
          d(x_d,\frC_\lambda) \leq 2 \abs{\overline{x}}}}
      $
    \end{enumerate}
    and
 \begin{align*}
  \rho_j(x)=\indicator_{\mathcal{N}_{j,l}}\rho_a(x ).
 \end{align*}

We define the weight~$a(x,y)$ such that
    \begin{align*}
      a(x,y) := \sum_j\abs{\overline{x}}^\alpha(1-\rho_j(x)\rho_j(y)) +\abs{\overline{y}}^\alpha(1-\rho_j(x)\rho_j(y)).
    \end{align*}

The weight~$a$ satisfies ~$a(x,y)\in C^\alpha(\overline \Omega\times \overline \Omega)$ due to the construction and is assumed to vanish outside of~$3\Omega$.

  \begin{lemma} [Behavior of the competitor]\label{lem:beh_com_super}
  For the competitor function~$u_C$ from  the Definition~\ref{def:com_super} there holds
  \begin{align*}
      \abs{u_C(y)-u_C(z)}\lesssim
      \min \biggset{1, \frac 1 {\frD}\abs{y-z}^{\frD}, \frac{\abs{y-z}}{(\abs{\overline{y}}\vee \abs{\overline{z}})^{1-\frD}}}.
    \end{align*}
  \end{lemma}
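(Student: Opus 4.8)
The plan is to exploit the explicit structure $u_C = (\delta_0^{d-1}\times\mu_\lambda)*u_d$ together with the gradient bound \eqref{supergrad}, $\abs{\nabla u_C}\lesssim \abs{\overline{x}}^{\frD-1}\indicator_{\set{\dis(x_d,\frC_\lambda)\le \frac12\abs{\overline x}}}$. First I would record the trivial bound: since $u_d$ is bounded (it is a fixed smooth profile, essentially $\tfrac12\sgn$ cut off), the convolution $u_C$ is bounded, so $\abs{u_C(y)-u_C(z)}\lesssim 1$, which gives the first entry in the minimum. This handles the regime $\abs{y-z}\gtrsim 1$ and the regime where $\abs{\overline y}\vee\abs{\overline z}$ is comparable to or larger than $\abs{y-z}$ is covered by the third entry once established.

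For the third entry, $\abs{u_C(y)-u_C(z)}\lesssim \abs{y-z}/(\abs{\overline y}\vee\abs{\overline z})^{1-\frD}$, the idea is to integrate the gradient along the segment $[z,y]$, as in the proof of Lemma~\ref{lem:beh_com}: $\abs{u_C(y)-u_C(z)}\le \int_0^1\abs{\nabla u_C(z+s(y-z))}\,ds\,\abs{y-z}$. By \eqref{supergrad} the integrand is $\lesssim \abs{\overline z + s(\overline y-\overline z)}^{\frD-1}$ on the set where $\dis(\cdot_d,\frC_\lambda)\le\frac12\abs{\cdot}$ in the first $m=d-1$ coordinates; since $\frD-1<0$ this is largest where $\abs{\overline z+s(\overline y-\overline z)}$ is smallest along the segment, and one bounds $\int_0^1 \abs{\overline z+s(\overline y-\overline z)}^{\frD-1}\,ds \lesssim (\abs{\overline y}\vee\abs{\overline z})^{\frD-1}$ using $\frD-1\in(-1,0)$ so the power is integrable; more carefully, if $\abs{\overline y - \overline z}$ is small compared to $\abs{\overline y}\vee\abs{\overline z}$ the whole segment stays at distance $\eqsim \abs{\overline y}\vee\abs{\overline z}$ from the origin in the barred coordinates, while if $\abs{\overline y-\overline z}\gtrsim \abs{\overline y}\vee\abs{\overline z}$ one splits the segment and uses integrability of $r^{\frD-1}$ near $r=0$ together with $\abs{y-z}\gtrsim\abs{\overline y}\vee\abs{\overline z}$ to absorb the difference into the other two entries.

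For the second entry, $\abs{u_C(y)-u_C(z)}\lesssim \frac1\frD\abs{y-z}^\frD$, the natural route is the self-similar/dimensional structure of $\mu_\lambda$: writing $u_C(y)-u_C(z)=\int \big(u_d(y-\xi)-u_d(z-\xi)\big)\,d(\delta_0^{d-1}\times\mu_\lambda)(\xi)$ and using that $u_d$ is Lipschitz away from $\frS$ but only bounded near it, one estimates the integrand by $\min\{1,\abs{y-z}/\dis\}$ and integrates against $\mu_\lambda$; the measure of an $r$-neighbourhood of $\frC_\lambda^{d-1}$ scales like $r^{\frD}$ (Lemma~\ref{lem:cantor-estimates}\eqref{itm:cantor-estimates1}), and summing the dyadic contributions $\sum_{k:\,2^{-k}\gtrsim\abs{y-z}} \abs{y-z}\,2^{k}\,2^{-k\frD}\eqsim \frac1\frD\abs{y-z}^{\frD}$ — here the factor $\frac1\frD$ appears precisely from the geometric series $\sum 2^{-k(1-\frD)}\eqsim (1-2^{-(1-\frD)})^{-1}\lesssim \frac1\frD$ as $\frD\to0$. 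Combining the three bounds and noting each dominates in its own regime yields the minimum.

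The main obstacle I expect is the bookkeeping in the second entry, specifically getting the sharp $\frac1\frD$ prefactor uniformly as $\frD\to 0$ (which matters because in the applications $\frD$ is taken arbitrarily small): one must be careful that the geometric-series constant is genuinely $O(1/\frD)$ and does not hide an extra divergence, and that the crossover between the $\abs{y-z}^\frD$ regime and the $\abs{y-z}/(\abs{\overline y}\vee\abs{\overline z})^{1-\frD}$ regime is handled without loss — i.e. that when $\abs{y-z}\eqsim \abs{\overline y}\vee\abs{\overline z}$ the two expressions are indeed comparable, which is exactly $\abs{y-z}^\frD \eqsim \abs{y-z}\,(\abs{\overline y}\vee\abs{\overline z})^{\frD-1}$.
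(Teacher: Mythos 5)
Your overall skeleton mirrors the paper's: the first entry is trivial from boundedness, the third is obtained by integrating the gradient bound~\eqref{supergrad} along the segment $[z,y]$ in the regime $\abs{y-z}\lesssim\abs{\overline y}\vee\abs{\overline z}$, and the interesting work is the second entry. For that second entry, however, your route diverges from the paper's, and the explanation you give for the $\frac1\frD$ prefactor is wrong. The paper does \emph{not} go back to the convolution; it integrates $\nabla u_C$ along the radial path from $(0,y_d)$ to $y=(\overline y,y_d)$, where \eqref{supergrad} gives $\abs{\nabla u_C(t\overline y,y_d)}\lesssim\abs{t\overline y}^{\frD-1}$, so that
\[
\abs{u_C(y)-u_C(0,y_d)}\le\abs{\overline y}\int_0^1\abs{t\overline y}^{\frD-1}\,dt=\abs{\overline y}^{\frD}\int_0^1 t^{\frD-1}\,dt=\tfrac1\frD\abs{\overline y}^{\frD},
\]
and in the complementary regime $\abs{y-z}\gtrsim\abs{\overline y}\vee\abs{\overline z}$ one has $\abs{\overline y}\lesssim\abs{y-z}$, which converts this into the claimed $\frac1\frD\abs{y-z}^{\frD}$. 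The $\frac1\frD$ is exactly $\int_0^1 t^{\frD-1}\,dt$; it has nothing to do with a geometric series of the kind you cite.

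In your convolution version there are two concrete problems. First, the dyadic sum $\sum_{k:\,2^{-k}\gtrsim\abs{y-z}}\abs{y-z}\,2^{k(1-\frD)}$ only covers the far region $\dis\gtrsim\abs{y-z}$, where the integrand behaves Lipschitz-like; you have silently dropped the near region $\dis\lesssim\abs{y-z}$ where the integrand saturates at $1$, and that region contributes $\mu_\lambda(\set{\dis\lesssim\abs{y-z}})\lesssim\abs{y-z}^{\frD}$, which is not negligible. Second, the claimed estimate $\sum 2^{-k(1-\frD)}\eqsim(1-2^{-(1-\frD)})^{-1}\lesssim\frac1\frD$ ``as $\frD\to 0$'' is incorrect: that series is bounded (by $2$) as $\frD\to0$, and in fact blows up like $\frac1{1-\frD}$ as $\frD\to1$, so it does not produce the $\frac1\frD$ you want. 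If one does the dyadic decomposition over the near region $\dis\lesssim\abs{y-z}$ instead, one gets $\sum_{k\ge k_0}2^{-k\frD}\eqsim\frac{2^{-k_0\frD}}{1-2^{-\frD}}\eqsim\frac1\frD\abs{y-z}^{\frD}$, which is the genuine source of a $\frac1\frD$ factor in a dyadic argument. So the convolution route can probably be salvaged, but as written the bookkeeping is misattributed and incomplete; the paper's radial-path argument avoids both pitfalls and is the cleaner way to localize the $\frac1\frD$.
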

  \begin{proof}

    We first consider the situation when~$y\in \mathcal{N}$ and~$\abs{y-z}\le \frac 12$. In this case in fact the segment~$$x:=[y,z]_t=(1-t)y+tz$$ belongs to~$\mathcal{N}$. Indeed let~$\abs{\overline{z}}\le \abs{\overline y}$. Then
    \begin{align*}
      \abs{\overline x}\le (1-t)\abs{\overline y}+t\abs{\overline z}\le \abs{\overline y},\\
      \abs{\overline x}\ge \abs{\overline y}-t\abs{\overline y-\overline z}\ge \tfrac 12 \abs{\overline y}
    \end{align*}
    and hence
    \begin{align*}
      \tfrac 12 \abs{\overline y}\le \abs{\overline x}\le \abs{\overline y}.
    \end{align*}
    Now 
    \begin{align*}
      \abs{\overline y}\le \tfrac 14 \dis(y_d,\frC_m^\lambda)\le \tfrac 14\dis(x_d,\frC_m^\lambda)+\tfrac 14\abs{y_d-x_d}
    \end{align*}
    and we see that the segment~$x$ belongs to~$\mathcal{N}$.
    
  \textbf{Case 1:} $\abs{y-z}\le \frac 12(\abs{\overline y}\vee\abs{\overline z})$. In this case, assuming~$\abs{\overline z}\le \abs{\overline y}$, we also have
    \begin{align*}
      \abs{\overline y}&\le \abs{\overline z}+\abs{y-z}\le\abs{\overline z}+\tfrac 12\abs{\overline y}\le 2\abs{\overline{z}}.
    \end{align*}
    
    Let~$x=z+t(y-z)$ with~$0<t<1$. No, using~$\abs{\overline z}\le \abs{\overline y}$ we have
    \begin{align*}
      \abs{z-x}&=(1-t)\abs{y-z}\le \tfrac{1}{2}\abs{\overline{y}},
      \\
      \abs{\overline{x}}&\le \abs{y}\le 2\abs{\overline x}.
    \end{align*}
    By the estimate \eqref{supergrad} we get
    \begin{align*}
      \abs{u_C(y)-u_C(z)}&\le \int_0^1\abs{\nabla u (x(t))}\, dt\abs{y-z}\\
      &\le  c\int_0^1\abs{\overline x}^{\frD-1}\, dt\abs{y-z}\\      
      &\le c \frac{\abs{y-z}}{(\abs{\overline y}\vee\abs{\overline z}))^{1-\frD}},
    \end{align*}
    where we used that~$\abs{\overline x}\ge \frac 12 \abs{\overline y}$ at the last step.
    
    \textbf{Case 2: }~$\abs{y-z}\ge \frac 12(\abs{\overline y}\vee\abs{\overline z})$. We consider
    \begin{align*}
      \abs{u_C(y)-u_C(0,y_d)}=\abs{u_C(y)-u_C(\hat{y})}
    \end{align*}
    with~$\hat{y}=(\frac 12 \dis(y_d,\frC^m_\lambda),y_d)$. Now
    \begin{align*}
      \abs{u_C(y)-u_C(0,y_d)}\le \int_0^1\abs{\nabla u_C((0,y_d)+t(\overline{y},0))}\, dt\abs{\overline{y}}\lesssim \frac  1{\frD}\abs{\overline{y}}^{\frD}.
    \end{align*}
    The same estimate with~$z$ follows by symmetry.
  \end{proof}
  
  Now we  proof that the competitor~$u_C$ belongs to the energy space~$W_{u_D}$. First, we consider the most complicated case, when both points~$y$ and~$z$ belong to the different diamonds~$\mathcal{N}_{l,j}$ and~$\mathcal{N}_{k,i}$ of the Cantor necklace~$\mathcal{N}$. This is in particular equivalent to the fact, that the components~$y_d$ and~$z_d$ on the line corresponding to the the~$d$ coordinate are in~$[-1,1]\setminus \mathcal{C}_\lambda^{d-1}$.    The next auxiliary result allows to integrate the distance function from the point to the Cantor set. 
  \begin{lemma}
   \label{lem:super_dist}
   If~$\beta<-1+\frD$, then
   \begin{align*}
        \int_{[-1,1]\setminus\mathcal{C}_\lambda^m} \dis(y_d, \mathcal{C}_\lambda^m)^{-\beta} dy_d <\infty.
   \end{align*}
  \end{lemma}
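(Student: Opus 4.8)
The plan is to estimate the integral by a convergent geometric series built from the dimensional bound on tubular neighbourhoods of the Cantor set supplied by Lemma~\ref{lem:cantor-estimates}\eqref{itm:cantor-estimates2}. Applying that estimate with $m=1$ (the relevant one-dimensional Cantor set, whose dimension $\frD=\log 2/\log(1/\lambda)$ is exactly the exponent appearing in Definition~\ref{def:com_super}), we have $\mathcal{L}^1\big(\set{y_d\in[-1,1]:\dis(y_d,\mathcal{C}_\lambda^m)\le r}\big)\lesssim r^{1-\frD}$ for all $r>0$. Since $\frD<1$, the hypothesis $\beta<-1+\frD$ forces $\beta<0$, so $r\mapsto r^{-\beta}$ is increasing; moreover $\mathcal{C}_\lambda^m\subset[-\tfrac12,\tfrac12]$, so on $\set{y_d\in[-1,1]:\dis(y_d,\mathcal{C}_\lambda^m)\ge\tfrac12}$ the distance is bounded above and below by absolute constants and the integrand is bounded there, contributing a finite amount. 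It therefore remains to control the integral over a fixed neighbourhood of $\mathcal{C}_\lambda^m$.

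For that I would decompose the neighbourhood into the dyadic shells $E_k:=\set{y_d\in[-1,1]:2^{-k-1}<\dis(y_d,\mathcal{C}_\lambda^m)\le 2^{-k}}$, $k\ge 0$. By the monotonicity of $r\mapsto r^{-\beta}$, the integrand on $E_k$ is at most $(2^{-k})^{-\beta}=2^{k\beta}$, while $\mathcal{L}^1(E_k)\le\mathcal{L}^1\big(\set{\dis(\cdot,\mathcal{C}_\lambda^m)\le 2^{-k}}\big)\lesssim 2^{-k(1-\frD)}$ by the estimate above. Summing the contributions gives
\begin{align*}
 \int_{[-1,1]\setminus\mathcal{C}_\lambda^m}\dis(y_d,\mathcal{C}_\lambda^m)^{-\beta}\,dy_d \lesssim 1+\sum_{k\ge 0}2^{k\beta}\,2^{-k(1-\frD)} = 1+\sum_{k\ge 0}2^{k(\beta-1+\frD)},
\end{align*}
and the geometric series converges because the hypothesis $\beta<-1+\frD$ (a fortiori $\beta<1-\frD$) makes the ratio $2^{\beta-1+\frD}$ strictly smaller than $1$. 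This yields the asserted finiteness.

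There is no genuine obstacle here; the only points demanding attention are invoking Lemma~\ref{lem:cantor-estimates}\eqref{itm:cantor-estimates2} with the correct exponent $1-\frD$ and checking that the stated range of $\beta$ is precisely what makes the resulting series summable. If one prefers to avoid the dyadic bookkeeping, the same conclusion follows from the layer-cake identity $\int_{[-1,1]}f^{-\beta}\,dy_d=(-\beta)\int_0^\infty t^{-\beta-1}\,\mathcal{L}^1(\set{f>t})\,dt$ applied to $f=\dis(\cdot,\mathcal{C}_\lambda^m)$, together with the complementary bound $\mathcal{L}^1(\set{f\le t})\lesssim t^{1-\frD}$ and the boundedness of $f$ on $[-1,1]$.
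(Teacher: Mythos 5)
Your argument is correct but proceeds by a genuinely different decomposition. The paper sums directly over the removed intervals (the diamonds $\mathcal{N}_{l,j}$ of the Cantor necklace): it evaluates $\int_{\mathcal{N}_{l,j}}\dis(\cdot,\mathcal{C}_\lambda^m)^{-\beta}\,dy_d\lesssim\delta_l^{1-\beta}/(1-\beta)$ for a single interval of generation $l$, then groups by level, using that there are $\sim 2^l=\lambda^{-\frD l}$ intervals of diameter $\delta_l\sim\lambda^l$, and lands on the geometric series $\sum_l\lambda^{(1-\beta-\frD)l}$. You instead slice by dyadic distance to the Cantor set and control the measure of each shell with the tubular-neighbourhood estimate of Lemma~\ref{lem:cantor-estimates}\eqref{itm:cantor-estimates2}, yielding $\sum_k 2^{k(\beta-1+\frD)}$. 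Both series converge exactly when $\beta<1-\frD$. Your route amortises the interval count through an already-established measure bound and is consequently shorter; the paper's is self-contained and makes visible where the exponent $1-\beta$ enters at the level of a single gap. One thing your analysis makes plain and is worth recording: the natural summability threshold is $\beta<1-\frD$, which does not match the stated hypothesis $\beta<-1+\frD$. The latter is strictly stronger (indeed it forces $\beta<0$, under which $\dis(\cdot,\mathcal{C}_\lambda^m)^{-\beta}$ is bounded on $[-1,1]$ and finiteness is immediate without any estimate at all), and the paper's own proof closes with the condition $\beta>-1+\frD$, which carries the same sign confusion; the intended hypothesis is surely $\beta<1-\frD$.
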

  \begin{proof}
    
  We estimate  the integral with respect to the full Cantor necklace~$\mathcal{N}$ via the sum taken over all the diamonds~$\mathcal{N}_{l,j}\in \mathcal{N}$ and over the level sets of the Cantor necklace~$l\in\mathcal{L}$. From here~$\delta_l$ denotes the diameter  of the element~$\mathcal{N}_{l,j}$.  We have

  \begin{align*}
  \int_{[-1,1]\setminus\mathcal{C}_\lambda^m} \dis(y_d, \mathcal{C}_\lambda^m)^{-\beta} dy_d &=\sum_j\int_{\mathcal{N}_j}\dis(y_d,\frC_\lambda^m)^{-\beta}dy_d\\
  &\lesssim \sum_j \frac{\delta_j^{1-\beta}}{1-\beta}=\sum_{l\in\mathcal{L}} \# l\frac{\delta_j^{1-\beta}}{1-\beta}\lesssim \sum_{l\in\mathcal{L}} \lambda^{-\frD l}\frac{\lambda^{l(1-\beta)}}{1-\beta}\\&\lesssim \sum_{l\in\mathcal{L}}\lambda^{(-\frD+1-\beta)l}<\infty
 \end{align*}
provided~$\beta>-1+\frD$. This proves the claim.
  \end{proof}
  \begin{lemma}
  \label{lem:com_super}
  
  If $p<\frac{d-\frD}{s-\frD}$,~$0<\frD<s<1$, then  $\mathcal{J}^s_p(u_C)<\infty$, so the competitor function~$u$ defined in the Definition~\ref{def:com_super} belongs to the energy space~$W_{u_D}$.
    \end{lemma}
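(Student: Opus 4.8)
The plan is to bound the Gagliardo double integral $\mathcal{J}^s_p(u_C)=\iint_{3\Omega\times3\Omega}\abs{u_C(y)-u_C(z)}^p\abs{y-z}^{-(d+sp)}\,dy\,dz$ by decomposing the domain of integration according to the geometry of the competitor. The structural facts I will use are: by \eqref{supergrad} the gradient $\nabla u_C$ is supported on the transition zone $\mathcal{M}=\set{\dis(x_d,\frC_\lambda)\le\tfrac12\abs{\overline x}}$, where $\abs{\nabla u_C}\lesssim\abs{\overline x}^{\frD-1}$; on the complementary necklace $\mathcal{N}$ the competitor is constant on each diamond $\mathcal{N}_{l,j}$, and for $x\notin\mathcal{M}$ one has $u_C(x)=u_C(0,x_d)$ (integrate \eqref{supergrad} radially in $\overline x$), where $u_C(0,\cdot)$ is the $\frD$-Hölder ``Cantor ladder'' satisfying $\abs{u_C(0,a)-u_C(0,b)}\lesssim\mu_\lambda((a\wedge b,\,a\vee b))\lesssim\abs{a-b}^{\frD}$ and equal to a constant on each gap of $\frC_\lambda$. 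The piece of the integral with $\abs{y-z}\ge1$ is finite at once, since $u_C\in L^\infty$, $3\Omega$ is bounded and $d+sp>d$, so I only need $\abs{y-z}\le1$, always using the pointwise bounds of Lemma~\ref{lem:beh_com_super} refined by this support information.

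First I would handle the near-diagonal regime $\abs{y-z}\le\tfrac12(\abs{\overline y}\vee\abs{\overline z})$, where $\abs{\overline y}\eqsim\abs{\overline z}$. Integrating $\nabla u_C$ along $[y,z]$ and noting that this segment meets $\mathcal{M}$ only when $\dis(y_d,\frC_\lambda)\lesssim\abs{\overline y}$ gives $\abs{u_C(y)-u_C(z)}\lesssim\abs{y-z}\,\abs{\overline y}^{\frD-1}\indicator_{\set{\dis(y_d,\frC_\lambda)\lesssim\abs{\overline y}}}$. Integrating out $z$ over $\abs{y-z}\le\tfrac12\abs{\overline y}$ (here $p(1-s)>0$ is used) produces a factor $\eqsim\abs{\overline y}^{p(1-s)}$, so this contribution is $\lesssim\int_{3\Omega}\abs{\overline y}^{p(\frD-s)}\indicator_{\set{\dis(y_d,\frC_\lambda)\lesssim\abs{\overline y}}}\,dy$. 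Since $\frD<s$, slicing in $y_d$ and integrating $\abs{\overline y}^{p(\frD-s)}$ over $\set{c\,\dis(y_d,\frC_\lambda)\lesssim\abs{\overline y}\lesssim1}\subset\setR^{d-1}$ produces at most a factor $\dis(y_d,\frC_\lambda)^{p(\frD-s)+d-1}$ (a constant if that exponent is positive, a logarithm if it vanishes), and Lemma~\ref{lem:super_dist} makes the remaining $y_d$-integral converge precisely when $p(s-\frD)<d-\frD$, i.e.\ $p<\frac{d-\frD}{s-\frD}$.

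Next I would handle the off-diagonal regime $\abs{y-z}>\tfrac12(\abs{\overline y}\vee\abs{\overline z})$. If at least one point, say $y$, lies in $\mathcal{M}$, I would use $\abs{u_C(y)-u_C(0,y_d)}\lesssim\abs{\overline y}^{\frD}$ (as in the proof of Lemma~\ref{lem:beh_com_super}) together with the triangle inequality; integrating out the other variable over $\tfrac12\abs{\overline y}\lesssim\abs{y-z}\lesssim1$ (using $sp>0$, giving $\eqsim\abs{\overline y}^{-sp}$) reduces this again to $\int_{3\Omega}\abs{\overline y}^{p(\frD-s)}\indicator_{\set{\dis(y_d,\frC_\lambda)\lesssim\abs{\overline y}}}\,dy$, finite under the same condition. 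The remaining and, as the text flags, most delicate case is when both $y,z\in\mathcal{N}$ lie in different diamonds: there $\abs{u_C(y)-u_C(z)}=\abs{u_C(0,y_d)-u_C(0,z_d)}\lesssim\abs{y-z}^{\frD}$, and this is nonzero only when both $\dis(y_d,\frC_\lambda)$ and $\dis(z_d,\frC_\lambda)$ are $\le\abs{y_d-z_d}\le\abs{y-z}$. Changing variables to $w=y-z$, for fixed $w$ with $\abs{w}\lesssim1$ the admissible $z$ form a set of measure $\lesssim\abs{w}^{d-1}\cdot\mathcal{L}^1(\set{\dis(\cdot,\frC_\lambda)\le\abs{w}})\lesssim\abs{w}^{d-1}\abs{w}^{1-\frD}=\abs{w}^{d-\frD}$ by Lemma~\ref{lem:cantor-estimates}\eqref{itm:cantor-estimates2} (with $m=1$), so this contribution is $\lesssim\int_{\abs{w}\lesssim1}\abs{w}^{\frD p-sp-d}\abs{w}^{d-\frD}\,dw=\int_{\abs{w}\lesssim1}\abs{w}^{p(\frD-s)-\frD}\,dw$, finite precisely when $p(\frD-s)-\frD>-d$, i.e.\ once more $p<\frac{d-\frD}{s-\frD}$. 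Collecting the three contributions gives $\mathcal{J}^s_p(u_C)<\infty$; since $a$ is built to vanish wherever $u_C$ varies, $\mathcal{J}^t_{q,a}(u_C)=0$, hence $u_C\in W_{u_D}$.

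The step I expect to be the main obstacle is the support bookkeeping needed to land on the exponent $d-\frD$ rather than the strictly worse $d-1$: estimating $\abs{u_C(y)-u_C(z)}$ by $\abs{y-z}^{\frD}$ (or $\abs{y-z}\abs{\overline y}^{\frD-1}$) over the whole domain, without recording that the difference vanishes away from an $\abs{w}$-neighbourhood of $\frC_\lambda$ in the $x_d$-variable and away from an $\abs{\overline y}$-collar of $\frS$ near $\mathcal{M}$, only yields convergence for $p<\frac{d-1}{s-\frD}$, which is not enough. The two ingredients that upgrade this to the asserted range are the sharp Cantor-neighbourhood bound $\mathcal{L}^1(\set{\dis(\cdot,\frC_\lambda)\le r})\lesssim r^{1-\frD}$ of Lemma~\ref{lem:cantor-estimates} and the distance-function integrability of Lemma~\ref{lem:super_dist}; a minor additional point is that in the borderline cases, where the inner radial integrals become logarithmic, the resulting sum over the generations of $\frC_\lambda$ still converges because $2\lambda<1$.
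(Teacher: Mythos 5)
Your proposal is correct and reaches the paper's own condition by essentially the same strategy: use the pointwise estimate of Lemma~\ref{lem:beh_com_super} together with the \emph{support} of $\nabla u_C$, split the Gagliardo integral by the geometry of $\mathcal{M}$ versus the necklace $\mathcal{N}$, and invoke the Cantor-neighbourhood measure bounds (Lemma~\ref{lem:cantor-estimates}, Corollary~\ref{cor:cor}, Lemma~\ref{lem:super_dist}). The organization is superficially different but the cases are in bijection: your near-diagonal regime $\abs{y-z}\le\tfrac12(\abs{\overline y}\vee\abs{\overline z})$ is the paper's second sub-integral in Case~(1); your off-diagonal $y\in\mathcal{M}$ piece collects the paper's first sub-integral in Case~(1) and Case~(2); and your different-diamonds piece is the paper's Case~(4). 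The one genuine methodological difference is in that last piece: you change variables to $w=y-z$ and use the single-variable measure bound $\mathcal{L}^1(\set{\dis(\cdot,\frC_\lambda)\le r})\lesssim r^{1-\frD}$ to get $\int\abs{w}^{p(\frD-s)-\frD}\,dw<\infty$, whereas the paper integrates out $\overline y,\overline z$ first, reduces to a one-dimensional double integral in $(y_d,z_d)$ over the gaps, and only then applies Lemma~\ref{lem:super_dist}; both routes give the same condition $p<\frac{d-\frD}{s-\frD}$. Your version is arguably cleaner here — it avoids the summation over the necklace elements $\mathcal{N}_{l,j}$ and sidesteps the bookkeeping of $\dis(y_d,\frC)^{d-1}\dis(z_d,\frC)^{d-1}$ — and you correctly identified the crux, namely that recording the support indicator $\dis(y_d,\frC_\lambda)\lesssim\abs{\overline y}$ (rather than estimating globally) is what upgrades the threshold from $\frac{d-1}{s-\frD}$ to the asserted $\frac{d-\frD}{s-\frD}$.
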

\begin{proof}
Depending on the location of the points~$y$ and~$z$, we distinguish the following cases:
  \begin{enumerate}
  \item Let~$y$ and~$z$ both belong to the region outside  the Cantor Necklace~$\mathcal{N}$:  ~$y,z \in\mathcal{M}$.  In this case, using Lemma~\ref{lem:beh_com_super}  we get
  \begin{align*}
    &\int_\mathcal{M} \int_\mathcal{M}  \frac{\abs{u_C(y)-u_C(z)}^p}{\abs{y-z}^{sp+d}} dydz\\
    &\lesssim \int_\mathcal{M} \int_\mathcal{M}\indicator_\set{\abs{y-z}\ge\frac 12  (\abs{\overline y}+\abs{\overline z})}\abs{y-z}^{p\frD-sp-d} dzdy   \\&+ \int_\mathcal{M} \int_\mathcal{M}\indicator_\set{\abs{y-z}<\frac 12  (\abs{\overline y}+\abs{\overline z})} \abs{y-z}^{p-sp-d} (\abs{\overline y}+\abs{\overline z})^{(\frD-1)p} dydz <\infty,
  \end{align*}
  provided~
  \begin{align*}
  p\frD-sp-d&>-2d+\frD,\\ 
  p-sp-d&>-d,\\
  p-sp-d+(\frD-1)p&>-2d+\frD.
  \end{align*}
That leads to the condition~$p<\frac{d-\frD}{s-\frD}$.
  \item Let  ~$y\in \Omega$  and~$z\in\mathcal{M}$. In this case we have~$\abs{\overline y}+\abs{\overline z}\lesssim \abs{y-z}$, using Lemma~\ref{lem:beh_com_super} we get
    \begin{align*}
    &\int_{3\Omega} \int_\mathcal{M}  \frac{\abs{u_C(y)-u_C(z)}^p}{\abs{y-z}^{sp+d}} dydz\\
    &\lesssim \int_{\Omega}  \int_\mathcal{M}\indicator_\set{\abs{y-z}\ge\frac 12  (\abs{\overline y}+\abs{\overline z})} \abs{y-z}^{p\frD-sp-d} dzdy \lesssim \int_{\mathcal{M}} \abs{\overline z}^{p\frD-sp}dz <\infty.
  \end{align*}
   Here  we used Corollary \ref{cor:cor} at the last step. The last integral is finite provided $$p\frD-sp-d>-2d+\frD.$$  
\item Let ~$y\in \Omega$  and~$z\in\mathcal{M}^\complement=\mathcal{N}$. In this case by the construction of the competitor function~$u_C$ we have~$u_C(y)-u_C(z)=0$.
  
   \item Let~$y$ and~$z$ belong to different diamonds of the Cantor Necklace~$\mathcal{N}$ : $y \in \mathcal{N}_{l,j}, z\in \mathcal{N}_{k,i}$ with~$k\neq l$ or~$j\neq i$.  In this case~$$\dis(y_d, \frC)+\dis(z_d, \frC)\lesssim \abs{y_d-z_d},$$
   and if~$p<\frac{d-\frD}{s-\frD}$, then
   $$
   \mathcal{I}:=\int_{3\Omega} \int_{3\Omega}  \frac{\abs{u_C(y)-u_C(z)}^p}{\abs{y-z}^{sp+d}} dydz<\infty.
   $$
   Indeed, using Lemma~\ref{lem:beh_com_super} and  evaluating the integral with respect to~$\overline y$ and~$\overline z$  we get 
   \begin{align*}
    &\int_{3 \Omega} \int_{3 \Omega} \sum_{l,j,k,i} \frac{\abs{u_C(y)-u_C(z)}^p}{\abs{y-z}^{sp+d}} \indicator_{\mathcal{N}_{l,j}}\indicator_{\mathcal{N}_{k,i}} \indicator_{\set{\dis(y_d, \frC)+\dis(z_d, \frC)\lesssim \abs{y_d-z_d}}}   dydz  \\
    &\lesssim \int_{-1}^{1} \int_{-1}^{1}  \abs{y-z}^{\frD p-sp-d}\dis(y_d, \frC)^{d-1}\dis(z_d, \frC)^{d-1} \indicator_{\set{\dis(y_d, \frC)+\dis(z_d, \frC)\lesssim \abs{y_d-z_d}}}  dy_ddz_d \\
    &\lesssim 2\int_{-1}^{1}  \int_{-1}^1 \abs{y-z}^{\frD p-sp-d} \dis(y_d, \frC)^{2(d-1)}  \indicator_{\set{\dis(y_d, \frC)+\dis(z_d, \frC)\lesssim \abs{y_d-z_d}}} dy_ddz_d\\
    &\lesssim  \int_{[-1,1]\setminus\mathcal{C}_\lambda^m} \dis(y_d, \frC)^{-(\frD p-sp-d+1+2(d-1))} dy_d, 
   \end{align*}by Lemma~\ref{lem:super_dist} the last integral is finite provided~$$\frD p-sp-d+1+2(d-1)<-1+\frD,$$ so~$p<\frac{d-\frD}{s-\frD}$.
  \end{enumerate}

\end{proof}

We denote by~$x^+_{l,j}$ and $x^-_{l,j}$ the upper and lower  vertexes of an element of the necklace~$\mathcal{N}$, see the Figure 2.

The next step is to study how the energy~$\mathcal{J}^t_{q,a}$ behaves on  smooth functions. In particular, we will show that for~$v\in C^0(\overline \Omega)$, there holds $\mathcal{J}^t_{q,a}(v)\ge c_0>0$.

\begin{lemma}\label{lem:superRiesz}
 For every~$v\in C^0(\overline{\Omega})$ it holds
 \begin{align}
  \abs{v(x^+_{l,j})-v(x^-_{l,j})}\lesssim \int_{\mathcal{N}_{l,j}} \int_{\mathcal{N}_{l,j}}\frac{\abs{v(y)-v(z)}}{(\abs{\overline y}+\abs{\overline z})^{2d}} \indicator_{\mathcal{M}_8(y)}\indicator_{\mathcal{M}_8(z)}dy dz.
 \end{align}

\end{lemma}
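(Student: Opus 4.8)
The plan is to repeat the dyadic telescoping of Lemma~\ref{lem:riesz}, this time run along the two cones of the bead $\mathcal{N}_{l,j}$ that meet the $x_d$-axis at its tips $x^+_{l,j}$ and $x^-_{l,j}$. Writing $\delta_l$ for the diameter of $\mathcal{N}_{l,j}$, near the upper tip $x^+_{l,j}$ the bead is a cone with apex $x^+_{l,j}$ and aperture $\eqsim1$, along which $\dis(x_d,\frC_\lambda)=(x^+_{l,j})_d-x_d$; intersecting this cone with $\mathcal{M}_8$ retains the subcone on which in addition $\abs{\overline x}\eqsim\dis(x_d,\frC_\lambda)$, and symmetrically near $x^-_{l,j}$. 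Accordingly, for $k\ge0$ I would set
\begin{align*}
  S^{+}_k&:=\set{x\in\mathcal{N}_{l,j}\cap\mathcal{M}_8:\ 2^{-k-1}\delta_l\le (x^+_{l,j})_d-x_d\le 2^{-k}\delta_l},\\
  S^{-}_k&:=\set{x\in\mathcal{N}_{l,j}\cap\mathcal{M}_8:\ 2^{-k-1}\delta_l\le x_d-(x^-_{l,j})_d\le 2^{-k}\delta_l}.
\end{align*}
The cone geometry then gives that each $S^\pm_k$ is nonempty with $\abs{S^\pm_k}\eqsim(2^{-k}\delta_l)^d$, that every point of $S^\pm_k$ has distance $\lesssim2^{-k}\delta_l$ from $x^\pm_{l,j}$, and that $\abs{\overline x}\eqsim2^{-k}\delta_l$ throughout $S^\pm_k$; in particular $S^+_0$ and $S^-_0$ are "equatorial" pieces of $\mathcal{N}_{l,j}$ with $\abs{S^\pm_0}\eqsim\delta_l^d$ and $\abs{\overline x}\eqsim\delta_l$ on them.

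Since $v\in C^0(\overline\Omega)$ and the sets $S^\pm_k$ collapse to $x^\pm_{l,j}$, continuity gives $\mean{v}_{S^\pm_k}\to v(x^\pm_{l,j})$ as $k\to\infty$, so a telescoping sum together with the triangle inequality yields
\begin{align*}
  \abs{v(x^+_{l,j})-v(x^-_{l,j})}\le\sum_{k\ge0}\bigabs{\mean{v}_{S^+_{k+1}}-\mean{v}_{S^+_k}}+\bigabs{\mean{v}_{S^+_0}-\mean{v}_{S^-_0}}+\sum_{k\ge0}\bigabs{\mean{v}_{S^-_{k+1}}-\mean{v}_{S^-_k}}.
\end{align*}
For any sets $A,B$ of finite positive measure one has $\mean{v}_A-\mean{v}_B=\dashint_A\dashint_B(v(y)-v(z))\,dy\,dz$, so each summand above is $\le(\abs{S^+_{k+1}}\abs{S^+_k})^{-1}\int_{S^+_{k+1}}\int_{S^+_k}\abs{v(y)-v(z)}\,dy\,dz$, and likewise for the $S^-$-chain and for the bridge term. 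On $S^+_{k+1}\times S^+_k$ one has $\abs{\overline y}+\abs{\overline z}\eqsim2^{-k}\delta_l$ and $\abs{S^+_{k+1}}\abs{S^+_k}\eqsim(2^{-k}\delta_l)^{2d}$, hence the normalization satisfies $(\abs{S^+_{k+1}}\abs{S^+_k})^{-1}\eqsim(\abs{\overline y}+\abs{\overline z})^{-2d}$ pointwise; the same comparison holds on $S^-_{k+1}\times S^-_k$, and on $S^+_0\times S^-_0$ one has $\abs{\overline y}+\abs{\overline z}\eqsim\delta_l$ and $\abs{S^+_0}\abs{S^-_0}\eqsim\delta_l^{2d}$, so again $(\abs{S^+_0}\abs{S^-_0})^{-1}\eqsim(\abs{\overline y}+\abs{\overline z})^{-2d}$.

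Combining these bounds, $\abs{v(x^+_{l,j})-v(x^-_{l,j})}$ is dominated by the sum over $k$ of $\int_{S^+_{k+1}}\int_{S^+_k}\abs{v(y)-v(z)}(\abs{\overline y}+\abs{\overline z})^{-2d}\,dy\,dz$, the analogous $S^-$-sum, and the integral over $S^+_0\times S^-_0$. For each chain the products $S^\pm_{k+1}\times S^\pm_k$ are pairwise disjoint in $k$, and all the product sets appearing are contained in $(\mathcal{N}_{l,j}\cap\mathcal{M}_8)\times(\mathcal{N}_{l,j}\cap\mathcal{M}_8)$; since the integrand is nonnegative, each of the three contributions is at most $\int_{\mathcal{N}_{l,j}}\int_{\mathcal{N}_{l,j}}\abs{v(y)-v(z)}(\abs{\overline y}+\abs{\overline z})^{-2d}\indicator_{\mathcal{M}_8(y)}\indicator_{\mathcal{M}_8(z)}\,dy\,dz$, which is the desired inequality. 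The part that really needs care is the geometric bookkeeping of the first step: one must verify that cutting $\mathcal{N}_{l,j}$ down to $\mathcal{M}_8$ still leaves, at every dyadic scale near each tip, a set of full volume $\eqsim(2^{-k}\delta_l)^d$ on which $\abs{\overline x}$ is comparable to — not merely bounded by — $2^{-k}\delta_l$. It is exactly this comparability that turns the crude normalization $(\abs{S^+_{k+1}}\abs{S^+_k})^{-1}$ into the weight $(\abs{\overline y}+\abs{\overline z})^{-2d}$, and it is why the cut-off $\indicator_{\mathcal{M}_8}$ must appear in the statement; granting it, the rest is a transcription of the proof of Lemma~\ref{lem:riesz}.
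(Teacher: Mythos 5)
Your argument is correct and follows essentially the same dyadic-telescoping strategy as the paper, which uses a single chain of balls $B_k\subset\mathcal{N}_{l,j}\cap\{\abs{\overline x}\eqsim\dis(x_d,\frC_\lambda)\}$ with $B_k\subset 2B_{k+1}$, shrinking to the two tips, and with bounded overlap, where you instead use two chains of dyadic slabs near each tip plus an equatorial bridge and exploit pairwise disjointness; the two constructions are interchangeable. One small remark: the two-sided comparability $\abs{\overline x}\eqsim 2^{-k}\delta_l$ that you emphasise is a bit stronger than what the proof needs (and in fact fails on $S^\pm_0$ near the opposite tip) — the estimate only requires the one-sided bound $\abs{\overline y}+\abs{\overline z}\lesssim 2^{-k}\delta_l$ together with the volume lower bound $\abs{S^\pm_k}\gtrsim(2^{-k}\delta_l)^d$, and it is the latter that intersecting with $\mathcal{M}_8$ genuinely secures.
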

\begin{proof}
    There exists a sequence of balls~$B_k$,~$k\in \setR$, $B_k\subset 2B_{k+1}$~ with the following properties:
    \begin{enumerate}
    \item $B_k\to\set{x_{l,j}^+}$ for~$k\to \infty$;
     \item $B_k\subset \mathcal{N}_{j,k} \cap \set{\abs{\overline y}>\frac {1}{10}d(y_d,\frC)} \cap \set{\abs{\overline z}>\frac {1}{10}d(z_d,\frC)} $;
     \item $\text{diam} B_k\eqsim2^{-k} \abs{x^+_{l,j}-x^-_{l,j}} $.
    \end{enumerate}
Note also, that if~$y\in B_k$, then~$\abs{\overline y}\eqsim d(y_d, \frC)$.    
Now, since the function~$v$ is continuous, for~$y\in B_k, z\in B_{k-1}$ we have
\begin{align*}
 \abs{v(x^+_{l,j})-v(x^-_{l,j})}&\le  \sum_{k\in \setR} \abs{\mean{v}_{B_k}-\mean{v}_{B_{k-1}}}\\
 &\lesssim \sum_m \dashint_{2B_k}\dashint_{2B_{k}} \abs{v(y)-v(z)} dydz\\
 &\lesssim\sum_k\int_{2B_k}\int_{2B_{k}} \frac{\abs{v(y)-v(z)}}{(\abs{\overline y}+\abs{\overline z})^{2d}}\, dy dz \\
 &\lesssim \int_{N_{j,k}}\int_{N_{j,k}} \frac{\abs{v(y)-v(z)}}{(\abs{\overline y}+\abs{\overline z})^{2d}} \indicator_{\mathcal{M}_8(y)}\indicator_{\mathcal{M}_8(z)} dydz,
\end{align*}
where we used at the last step the fact that the balls in~$B_k$ have finite overlap.

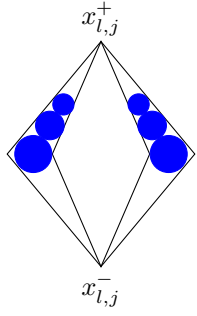
\begin{figure}[ht!]\centering \label{pic1}
 \begin{tikzpicture}

     \node [draw,diamond,
     draw = black,
    minimum width = 1.3cm,
    minimum height = 3cm
     ]{};
     \node at (0 ,1.8) {$x^+_{l,j}$};
     \node at (0 ,-1.8) {$x^-_{l,j}$};
    
     \node[diamond,
    draw = black,
    minimum width = 2.5cm,
    minimum height = 3cm]{} ; 
    %\node[dot=10pt]  at (0,1.8){$x^+_{l,j}$} ;
   % \node[dot]  at (0,-1.8) {$x^{-}_{l,j}$} ;
   \draw[fill,color=blue] (-0.9,0) circle[radius=7pt];
    \draw[fill,color=blue] (-0.68,0.38) circle[radius=5.4pt];
   \draw[fill,color=blue] (-0.5,0.66) circle[radius=4pt];
    \draw[fill,color=blue] (0.9,0) circle[radius=7pt];
        \draw[fill, color=blue] (0.68,0.38) circle[radius=5.4pt];
   \draw[fill,color=blue] (0.5,0.66) circle[radius=4pt];
    \end{tikzpicture}
    \caption{One element of the necklace~$\mathcal{N}_{l,j}$ and the sequence of covering balls~$B_k$.}
\end{figure}
\end{proof}

The auxiliary results of Lemma~\ref{lem:superRiesz} help to prove  the following key  properties of the model in the supercritical case

\begin{lemma}\label{lem:Lavr_super}
 For~$u_D$ defined in the Definition~\ref{def:com} and~$\frD$ being the dimension of the fractal barrier set defined as $\frD := \dim(\frC_\lambda^m) = -m \log(2)/\log(\lambda)$ it holds:
  \begin{enumerate}
  \item If $p<\frac{d-\frD}{s-\frD}$, then  $\mathcal{J}^s_p(u_C)<\infty$.
  \item $\mathcal{J}^t_{q,a}(u)=0$.
  \item  If 
  \begin{align}\label{eq:cond1}
 \frac{q}{q-1}\bigg(t-\frac {d+\alpha}{q}\bigg)>\frD,
  \end{align}then   $\mathcal{J}^t_{q,a}(v)\ge c_0>0$ for~$v\in C^{0,\gamma}(\overline{\Omega})$,~$\gamma>\frD$, with~$v= u_D$ on~$ \Omega^{\complement}$.
 \end{enumerate}
Properties (a) and (b) implies, that  the competitor function~$u_C$ defined in \eqref{def:com_super} belongs to the energy space~$W_{u_D}$.
 \end{lemma}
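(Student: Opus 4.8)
The plan is to dispose of parts~(a) and~(b) almost immediately and to put all the effort into~(c), which I would run in parallel with the subcritical estimate (proof of Lemma~\ref{lem:Lavr}(c)), using Lemma~\ref{lem:superRiesz} wherever that argument used Lemma~\ref{lem:riesz}. Part~(a) is nothing but Lemma~\ref{lem:com_super}: in the supercritical regime $s-\frac dp>0$ one is free to take $\frD$ arbitrarily small, in particular $0<\frD<s<1$, so the hypothesis $p<\frac{d-\frD}{s-\frD}$ made here is exactly the one of Lemma~\ref{lem:com_super}, which yields $\mathcal{J}^s_p(u_C)<\infty$. Part~(b) is forced by the construction: the competitor $u_C$ of Definition~\ref{def:com_super} is arranged to take a single constant value on the bulk of each diamond $\mathcal{N}_{l,j}$, and the cut-offs $1-\rho_j(x)\rho_j(y)$ entering $a(x,y)$ are chosen so that $a(x,y)>0$ only for pairs $(x,y)$ lying in a region on which $u_C$ is constant; hence $a(x,y)\bigl(u_C(x)-u_C(y)\bigr)=0$ for a.e.\ $(x,y)$, the integrand of $\mathcal{J}^t_{q,a}(u_C)$ vanishes identically, and $\mathcal{J}^t_{q,a}(u_C)=0$. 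Together (a) and~(b) place $u_C$ in $W_{u_D}$ with $\mathcal{J}(u_C)=\mathcal{J}^s_p(u_C)<\infty$, which is the last assertion of the lemma.

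For part~(c), fix $v\in C^{0,\gamma}(\overline\Omega)$, $\gamma>\frD$, with $v=u_D$ on $\Omega^\complement$. First I would symmetrize: passing to the odd part $\tfrac12\bigl(v(\overline x,x_d)-v(\overline x,-x_d)\bigr)$ does not increase $\mathcal{J}^t_{q,a}$ (convexity together with the symmetry of $a$ and of $\Omega$ in $x_d$) and preserves the boundary constraint since $u_D$ is odd in $x_d$, so assume $v$ odd in $x_d$. Then, for every diamond $\mathcal{N}_{l,j}$, Lemma~\ref{lem:superRiesz} controls the oscillation of $v$ across it,
\begin{align*}
 \bigabs{v(x^+_{l,j})-v(x^-_{l,j})}\lesssim \int_{\mathcal{N}_{l,j}}\int_{\mathcal{N}_{l,j}}\frac{\abs{v(y)-v(z)}}{(\abs{\overline y}+\abs{\overline z})^{2d}}\,\indicator_{\mathcal{M}_8}(y)\,\indicator_{\mathcal{M}_8}(z)\,dy\,dz.
\end{align*}
Since $v$ is continuous and carries the datum $u_D=u_C$, whose values on the diamonds sweep through the entire height of the Cantor staircase, summing these inequalities over the diamonds met along the necklace yields a lower bound
\begin{align*}
 1\lesssim \sum_{l,j}\int_{\mathcal{N}_{l,j}}\int_{\mathcal{N}_{l,j}}\frac{\abs{v(y)-v(z)}}{(\abs{\overline y}+\abs{\overline z})^{2d}}\,\indicator_{\mathcal{M}_8}(y)\,\indicator_{\mathcal{M}_8}(z)\,dy\,dz .
\end{align*}

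Now integrate out the Cantor directions inside each diamond with Lemma~\ref{lem:cantor-estimates}, which brings in a factor $(\abs{\overline y}+\abs{\overline z})^{\frD}$, and apply Hölder's inequality with exponents $q$ and $q'=\frac q{q-1}$ against the weight $\abs{\overline y}^\alpha+\abs{\overline z}^\alpha$ sitting inside $a$. This produces
\begin{align*}
 1\lesssim \bigl(\mathcal{J}^t_{q,a}(v)\bigr)^{1/q}\Bigl(\int\!\!\int(\abs{\overline y}+\abs{\overline z})^{\gamma_1}\abs{y-z}^{\gamma_2}\,\indicator_{\set{\abs{y-z}\le\abs{\overline y}+\abs{\overline z}}}\,dy\,dz\Bigr)^{1/q'},
\end{align*}
with $\gamma_1=\alpha(q'-1)+(\frD-2d)q'$ and $\gamma_2=tq'+dq'-d>0$; a geometric summation over the generations of the necklace, in the spirit of Lemma~\ref{lem:super_dist} and Corollary~\ref{cor:cor}, shows that the last integral converges exactly when $\frac q{q-1}\bigl(t-\frac{d+\alpha}q\bigr)>\frD$, i.e.\ under~\eqref{eq:cond1}. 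Rearranging gives $\mathcal{J}^t_{q,a}(v)\ge c_0>0$ with $c_0$ depending only on the data.

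The hard part will be the lower-bound step in~(c): making precise that a continuous $v$ pinned to $u_D$ outside $\Omega$ must realize the full staircase height across the necklace and that this cost is faithfully captured by $\sum_{l,j}$ of the diamond integrals above. In the subcritical case a single contact set $\frS$ carries the jump and the Cantor measure is integrated once; here the obstruction is distributed over the self-similar family $\{\mathcal{N}_{l,j}\}$, so one must organize the interaction between diamonds of different generations, and it is exactly the hypothesis $\gamma>\frD$ — a regularity enjoyed by all smooth competitors, since $\frD<1$, hence by everything relevant for $H_{u_D}$ — that prevents $v$ from dumping all of its oscillation into the $a$-free transition cones between diamonds and that renders the generation-by-generation sum finite. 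Checking that the convergence of the dual integral is governed by precisely the exponent in~\eqref{eq:cond1} is where the bookkeeping is heaviest.
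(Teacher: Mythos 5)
Parts (a) and (b) are handled the same way as the paper (Lemma~\ref{lem:com_super} plus the observation that $a$ vanishes wherever $u_C$ oscillates), and your observation that the last sentence of the lemma follows from (a)+(b) is correct. The gap is in part~(c): you have imported the subcritical argument wholesale, and the geometry is different here. In the subcritical Lemma~\ref{lem:Lavr}, the pointwise Riesz estimate (Lemma~\ref{lem:riesz}) is taken at a point $(\overline x,0)$ of the contact set $\frS$ and then integrated against the Cantor measure $d\mu$; it is that integration that produces, via Lemma~\ref{lem:cantor-estimates}(\ref{itm:cantor-estimates3}), the factor $(\abs{y_d}+\abs{z_d})^{\frD}$ you invoke. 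In the supercritical case, Lemma~\ref{lem:superRiesz} already controls the full oscillation $\abs{v(x^+_{l,j})-v(x^-_{l,j})}$ across one fixed diamond by a double integral over $\mathcal{N}_{l,j}\times\mathcal{N}_{l,j}$; there is nothing to integrate against a Cantor measure and Lemma~\ref{lem:cantor-estimates} plays no role at this stage. Consequently the exponents you obtain, namely $\gamma_1=\alpha(q'-1)+(\frD-2d)q'$ for $(\abs{\overline y}+\abs{\overline z})$, carry a spurious $\frD q'$. If you push those exponents through the integrability criterion of Corollary~\ref{cor:cor} you land back on the subcritical inequality $t-\tfrac{d-\frD}{q}+\tfrac{\alpha}{q}>0$, not on \eqref{eq:cond1}, contrary to what you assert at the end.

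In the paper's argument the dimension $\frD$ enters only once and in a different place: after applying H\"older on each diamond with exponents $\gamma_1=tq'+\tfrac{d}{q-1}$ (for $\abs{y-z}$) and $\gamma_2=-2dq'-\alpha\tfrac{q'}{q}$ (for $\abs{\overline y}+\abs{\overline z}$, with \emph{no} $\frD$ term), one sums the dual factors over the self-similar family $\{\mathcal{N}_{l,j}\}$ and it is the geometric series over generations $l$ (there are $\sim 2^l\sim\lambda^{-\frD l}$ diamonds of diameter $\sim\lambda^l$, cf.\ Lemma~\ref{lem:super_dist}) that produces the threshold $\gamma_1+\gamma_2>-2d+\frD$ and hence \eqref{eq:cond1}. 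You do gesture at this summation, but you cannot also integrate a Cantor measure: doing both double-counts $\frD$. Your side remark about $\gamma>\frD$ is in the right spirit — it is what guarantees that the telescoping $1\lesssim\sum_{l,j}\abs{v(x^+_{l,j})-v(x^-_{l,j})}$ survives passage across the Cantor set of dimension $\frD$ — but it does not repair the exponent bookkeeping, which needs to be redone without the extra $\frD q'$ and with the corrected $\alpha$-sign so that the final constraint is exactly $\tfrac{q}{q-1}\bigl(t-\tfrac{d+\alpha}{q}\bigr)>\frD$.
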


\begin{proof}
 The property (a) was proved in Lemma~\ref{lem:com_super}.
Now, $\mathcal{J}^t_{q,a}(u)=0$ due to the construction of the competitor function.

To show (c) we use the following property for~$v\in C^0(\overline{\Omega})$ with~$v=u_D$ on~$\Omega^C$:
 \begin{align*}
  1\lesssim \sum_l\sum_{j=1}^{2^l-1}\abs{v(x_{l,j}^+)-v(x^-_{l,j})}.
 \end{align*}

No we use Lemma~\ref{lem:superRiesz} and  Hölder inequality  
\begin{align*}
 &\int_{\mathcal{N}_{l,j}} \int_{\mathcal{N}_{l,j}}\frac{\abs{v(y)-v(z)}}{(\abs{\overline y}+\abs{\overline z})^{2d}} \indicator_{\mathcal{M}_8(y)}\indicator_{\mathcal{M}_8(z)}dy dz\\
 &\lesssim \bigg(\int_{\mathcal{N}_{l,j}} \int_{\mathcal{N}_{l,j}}a(y,z)\frac{\abs{v(y)-v(z)}^q}{\abs{y-z}^{tq+d}}dydz \bigg)^{\frac 1 q} \bigg(\int_{\mathcal{N}_{l,j}} \int_{\mathcal{N}_{l,j}}    \abs{y-z}^{\gamma_1} (\abs{\overline y}+\abs{\overline z})^{\gamma_2}\indicator_{\mathcal{M}_8(y)}\indicator_{\mathcal{M}_8(z)} dydz\bigg)^{\frac 1 {q'}}, 
\end{align*}
with~$\gamma_1=tq'+\frac{d}{q-1}$ and~$\gamma_2=-2dq'-{\alpha}\frac{q'}{q}$ and the last integral is finite provided 
$$
\gamma_1 +\gamma_2 >-2d+\frD,
$$
which gives the condition~\eqref{eq:cond1}.
\end{proof}

With the help of the Lemma~\ref{lem:Lavr_super} we are ready now to complete the proof of the main result of this section.  
\begin{proof}[Proof of Theorem \ref{thm:super}]
 Now, we use the argument with the scaling of the weight coefficient~$a(x,y)$. Let~$\nu>0$ be a free parameter  and define the energy with the scaled weight~$\nu a$ 
 \begin{align*}
    \mathcal{J}_\nu(v):= \mathcal{J}^s_p(v)+\mathcal{J}^t_{q,\nu a}.
 \end{align*}
Then
 \begin{align*}
    \mathcal{J}(v)\ge \mathcal{J}^t_{q,\nu a}(v)=\nu \mathcal{J}^t_{q,a}(v)\ge \nu c_0>0,
 \end{align*}
since~$v=u_D$ on~$ \Omega^{\complement}$. From the other side choose~$\nu$ such that
 \begin{align*}
    \mathcal{J}^s_p(u_C)\le \nu c_0-1.
 \end{align*}
Then
 
 \begin{align*}
\mathcal{J}^s_p(u_C)\le \nu c_0-1\le \mathcal{J}^s_p(v)-1,
 \end{align*}
so there Lavrentiev phenomenon occurs for sufficiently large value of~$\nu$. 

Combination of properties (a) and (c) gives the sufficient condition on the parameters of the model:
\begin{align*}
\frac{p}{p-1}\bigg(s-\frac d p\bigg)< \frac{q}{q-1}\bigg(t-\frac {d+\alpha}{q}\bigg)
\end{align*}
sufficient for the Lavrentiev gap.
This proves the claim  and completes the proof the Theorem~\ref{thm:super}.
\end{proof}

We now  provide a straightforward embedding interpretation of the condition we have  derived to determine the existence of a Lavrentiev gap:

\begin{align}\label{condsup}
\frac{p}{p-1}\bigg(s-\frac d p\bigg)< \frac{q}{q-1}\bigg(t-\frac {d+\alpha}{q}\bigg).
\end{align}

Let~$\mathcal{C}=\mathcal{C}_\nu$ be the Cantor necklace with the Hausdorff dimension~$\frD:=m\frac{\log(2)}{ \log(\nu^{-1})}$, for the supercritical case it holds
\begin{align*}
 H^{s,p}(\Omega) \not\embedding W^{\frD,p}(\mathcal{C}),
\end{align*}
that is
\begin{align}\label{eq:12}
 \ind(H^{s,p}(\Omega))=s-\frac p d < \ind( W^{\frD,p}(\mathcal{C}))=\frD-\frac{\frD}{p}.
\end{align}

From the other side
\begin{align*}
 H^{t,q}_a(\Omega) \embedding W^{\frD,q}(\mathcal{C}),
\end{align*}
that is
\begin{align}\label{eq:22}
 \ind(H^{t,q}_a(\Omega))=s=t-\frac q d-\frac \alpha d \ge \ind( L^q(\mathcal{C}))=\frD-\frac{\frD}{q}.
\end{align}

Now combination of the inequalities~\eqref{eq:12} and~\eqref{eq:22} again gives the condition~\eqref{condsup}.

\begin{remark}
 Let us mention that related  question of density of smooth functions  poses a delicate challenge, even in the case of  local non-autonomous models with nonstandard growth.  Specifically,  the sharp condition for~$L^\infty$-truncation method for the local~$t,s=1$ double-phase and supercritical case~$p>d$  obtained by~\textcite{BulGwiSkr22} 
 \begin{align*}
  q\le p+\alpha\frac{p}{d}
 \end{align*} is different   condition that the one for the Lavrentiev gap to appear in the supercritical case from \textcite{BalDieSur20}
 \begin{align*}
  q> p+\alpha\frac{p-1}{d-1}.
 \end{align*}
This scenario may appear for nonlocal and mixed models as well.  
\end{remark}
% 
% \begin{remark}
% 
% Let us mention, that similar to the local situation, there is no Lavrentiev gap in the one-dimensional case for the functional
% \begin{align*}
%  \mathcal{J} (v):=\int_{\setR}  \frac 1 p \frac{\abs{v(x)-v(y)}^p}{\abs{x-y}^{1+sp}}\, dxdy + \int_{\setR}  \frac {1} {q} a(x,y) \frac{v(x)-v(y)^q}{\abs{x-y}^{1+tq}}\, dxdy 
% \end{align*}
% 
% The proof is mimics  to  the argument that goes back to \textcite[Proposition 6.15]{ButGiaHil98} and is based on the co-called relaxed functional. The relaxed functional $\overline{J}$  is defined as a 
% 
% \begin{align*}
%     \mathcal{\overline{J}}:=\max\set{\mathcal{G}:W\to (\infty;\infty], \quad \mathcal{G}\le \mathcal{J} \text{on }H}.
% \end{align*}
% 
% It is obvious that $\mathcal{\overline{J}}\le\mathcal{J}$ on~$W$ and $\mathcal{\overline{J}}\le\mathcal{J}$ on~$H$. According  to the theory, there is no Lavrentiev gap, if~$  \mathcal{\overline{J}}=\mathcal{J}$ on~$W$. 
% 
% \end{remark}

\section{Nonlocal-local   mixed model \textbf{II}}\label{nonlocal}
In this section we construct example of Lavrentiev phenomenon for the nonlocal model \textbf{II},  so when the first phase~$\mathcal{J}_{p}^s(v)$ is nonlocal  and~$\mathcal{J}_{q,a}^t(v)$   is  local.  
\subsection{Subcritical case}
In the mixed  case we can use the estimate from the Lemma~\ref{lem:beh_com} for the competitor function to show that the first phase ~$\mathcal{J}^s_p(u_C)<\infty$.

The next step is to study how the local energy~$\mathcal{J}^1_{q,a}$ behaves on smooth functions. In particular, we will show that for~$v\in C^1(\overline \Omega)$ with~$v=u_D$ on~$\Omega^\complement$ the~$t,q$-energy is positive $\mathcal{J}^t_{q,a}(v)\ge c_0>0$. 

Let~$K^{\pm}$ denote the upper and lower cones:
\begin{align*}
K^+&:=\set{y\in \RRd: \abs{\overline{y}}\le \tfrac  14 y_d,\, 0<y_d\le 2},\\
K^-&:=\set{y\in \RRd: \abs{\overline{y}}\le -\tfrac  14 y_d,\, -2<y_d\le 0},\\
\btimes&:=K^+\cup K^-.
\end{align*}
 Define the restricted Riesz potential as
\begin{align*}
  \mathcal{I}_1^{\btimes}(v)(x)
  &:= \int_{\RRd}\frac{v(x+y)}{\abs{y}^{d-1}} \indicator_{\btimes}(y)\, dy.
\end{align*}

  Define $\mathcal I^+_1(v)$ and $\mathcal I^-_1(v)$ by
%  \begin{align*}
$$
    \mathcal{I}^\pm_1(v)(x):=\int_{K^{\pm}}\frac{v(x+y)}{\abs{y}^{d-1}} \, dy.
$$
  Then $\mathcal I_1^{\btimes} v(x)= \mathcal I_1^+ v(x)  + \mathcal I_1^- v(x)$.

Let us consider the case of one saddle point setup centered at zero.

\begin{lemma}\label{riesz_local}
For ~$u\in C^0(\overline{\Omega})$ with~$v=u_C$ on~$\partial\Omega$ and every~$x\in (-\frac 12, \frac 12)\times 0$ it  holds
\begin{align*}
1\lesssim \mathcal{I}_1^{\btimes}(\abs{\nabla v})(\overline{x},0).
\end{align*}
 \end{lemma}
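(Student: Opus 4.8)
The plan is to reduce the claim to a one-dimensional statement about the vertical behavior of $v$ along the $d$-axis, and then use the fundamental theorem of calculus along vertical segments together with the cone geometry. First I would observe that since $v = u_C$ on $\partial\Omega$ and $u_C(\overline x, x_d) = \tfrac12 \sgn(x_d)\rho(x)$ takes the value $+\tfrac12$ near the "north pole" region and $-\tfrac12$ near the "south pole" region of the saddle configuration centered at $x=(\overline x, 0)$, there is a fixed jump of size $1$ between the top and bottom of the double cone $x + \btimes$. Concretely, for a generic point $x = (\overline x, 0)$ with $\abs{\overline x} \le \tfrac12$, the translated cone $x + K^+$ reaches into the region where $u_C \equiv \tfrac12$ and $x + K^-$ reaches into the region where $u_C \equiv -\tfrac12$, so that $\bigl| v(x + w^+) - v(x + w^-) \bigr| \gtrsim 1$ for suitable $w^\pm \in K^\pm$ lying on the vertical ray through $x$.

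Next I would estimate this jump by the line integral of $\abs{\nabla v}$. Pick the vertical segment $\gamma(\tau) = x + \tau e_d$, $\tau \in [-c_0, c_0]$, connecting a point in $x+K^-$ to one in $x+K^+$; this segment lies in $x + \btimes$ by the cone definition (the axis of each cone is vertical). Then
\begin{align*}
1 \lesssim \bigl| v(x + c_0 e_d) - v(x - c_0 e_d) \bigr| \le \int_{-c_0}^{c_0} \bigl| \partial_d v(x + \tau e_d) \bigr| \, d\tau \le \int_{-c_0}^{c_0} \abs{\nabla v}(x + \tau e_d) \, d\tau.
\end{align*}
To convert this into $\mathcal I_1^{\btimes}(\abs{\nabla v})(x)$, I would average over a small transverse ball: instead of a single vertical line, integrate over all points $y = x + w$ with $w$ in a fixed-size subcone $K' \subset \btimes$ bounded away from the vertex, where $\abs{w}^{d-1} \eqsim 1$; integrating the one-dimensional estimate in the transverse directions gives
\begin{align*}
1 \lesssim \int_{K'} \abs{\nabla v}(x + w) \, dw \eqsim \int_{K'} \frac{\abs{\nabla v}(x+w)}{\abs{w}^{d-1}} \, dw \le \mathcal I_1^{\btimes}(\abs{\nabla v})(x).
\end{align*}
All constants depend only on $d$ (and the fixed aperture $\tfrac14$ of the cones and the fixed scale $c_0$), which is exactly the claimed uniformity.

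The main obstacle I anticipate is verifying carefully that the translated double cone $x + \btimes$ does in fact meet both the "$u_C = +\tfrac12$" and "$u_C = -\tfrac12$" sets for \emph{every} $x = (\overline x, 0)$ with $\abs{\overline x} \le \tfrac12$ — that is, checking the geometric compatibility between the cone aperture $\tfrac14$, the truncation scale $y_d \le 2$, and the support conditions on $\rho$ (which require $\dis(\overline x, \frC_\lambda^{d-1}) \le 2\abs{x_d}$ for $u_C$ to be nonzero). One has to ensure the vertical ray from $x$ enters the region $\set{\dis(\overline x, \frC^{d-1}_\lambda) \le 2\abs{x_d}}$ — but since the first $d-1$ coordinates are frozen at $\overline x$ and one only increases $\abs{x_d}$, this is automatic once $\abs{x_d}$ is large enough, i.e. at height $\eqsim \dis(\overline x, \frC_\lambda^{d-1})/2$, which is bounded by a dimensional constant. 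A secondary technical point is that $v$ is merely $C^0(\overline\Omega)$ in the statement (not $C^1$), so strictly one should either assume $v \in C^1$ as in the surrounding discussion or replace the pointwise fundamental theorem of calculus by a mollification/approximation argument; I would simply restrict to $v \in C^1(\overline\Omega)$, consistent with the paragraph preceding the lemma, and note that this is the only case needed.
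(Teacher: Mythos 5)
Your proof takes a genuinely different route from the paper's, but it has a gap that I believe is fatal as stated. The paper telescopes through dyadic box averages $\mean{v}_{Q^\pm_l}$ approaching the vertex: one writes
\begin{align*}
1 = \abs{v_{\text{top}}-v_{\text{bot}}} \le \abs{v_{\text{top}}-\mean{v}_{Q^+_l}} + \abs{\mean{v}_{Q^+_l}-\mean{v}_{Q^-_l}} + \abs{\mean{v}_{Q^-_l}-v_{\text{bot}}},
\end{align*}
sends $l\to 0$ (so the middle term vanishes by continuity of $v$), and bounds the two remaining terms by $\mathcal I^\pm_1(\abs{\nabla v})(0)$ uniformly in $l$ via a pointwise restricted-Riesz representation $\abs{v_{\text{top}}-v(x)} \lesssim \int_{K^+}\abs{\nabla v(x+y)}\abs{y}^{1-d}\,dy$ followed by a Fubini step that moves the evaluation point to the vertex. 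The crucial feature is that this mechanism captures the gradient at \emph{all} dyadic scales down to the vertex, which is exactly what the $\abs{y}^{1-d}$ weight in $\mathcal I_1$ encodes.

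Your final averaging step, by contrast, restricts to a fixed subcone $K'\subset\btimes$ \emph{bounded away from the vertex} with $\abs{w}\eqsim 1$, and concludes $1 \lesssim \int_{K'}\abs{\nabla v}(x+w)\,dw$. That inequality is false in general. The obstruction: nothing prevents $v$ from being identically $\pm\tfrac12$ on the whole double cone outside an arbitrarily small neighborhood of the vertex, with all of $\nabla v$ concentrated near the vertex. Then $\int_{K'}\abs{\nabla v}=0$, yet $\mathcal I_1^{\btimes}(\abs{\nabla v})(x)\gtrsim 1$ still holds precisely because of the $\abs{y}^{1-d}$ singularity. Your own FTC step $1\lesssim\int_{-c_0}^{c_0}\abs{\nabla v}(x+\tau e_d)\,d\tau$ runs the vertical axis \emph{through} the vertex (and indeed the axis is the only vertical line that stays inside $\btimes$ for all heights, since the cone degenerates at $y_d=0$), so it inherently uses the near-vertex region; but once you try to thicken the line to a solid region of fixed scale to compare with $\mathcal I_1$, the translated vertical segments exit the cone near $y_d=0$, and you lose exactly the contribution that the Riesz weight is designed to retain. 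To fix this you would need either a telescoping/dyadic decomposition down to the vertex (as the paper does), or a genuinely radial decomposition of the cone where each ray from the vertex picks up the jump and the $\abs{y}^{1-d}$ factor appears as the polar Jacobian; a fixed-scale subcone cannot work. There is also a secondary point: the endpoints $x\pm c_0e_d$ must lie on $\partial\Omega$ for $v=u_C$ to be usable, forcing $c_0=1$, which you should make explicit.
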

 
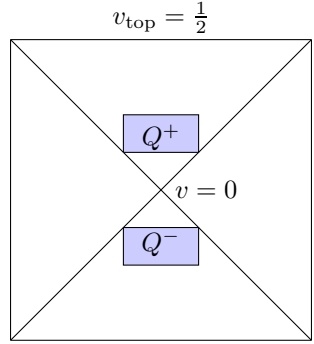
\begin{figure}[ht!]\centering \label{pic:riesz}
 \begin{tikzpicture}
 \coordinate (r0) at ( 0.0,  0.0) ; 
   \coordinate (s0) at (2.0, 2.0) ; 
   \coordinate (s1) at ( 2.0, -2.0); 
   \coordinate (si) at (-2.0, 2.0); 
   \coordinate (sq) at (-2.0, -2.0);
   % DRAW TREE
 %  \fill[fill=blue!20] (r0.center)--(s0.center)--(si.center);
   \draw (r0)--(s0);
   \draw (r0)--(s1);
   \draw (r0)--(si);
   \draw (r0)--(sq);
   \draw(-2,-2)--(2,-2);
   \draw(-2,2)--(2,2);   
   \draw(2,-2)--(2,2);   
   \draw(-2,-2)--(-2,2);     
  \filldraw[fill=blue!20] (-0.5,0.5)--(0.5,0.5)--(0.5,1)--(-0.5,1)--(-0.5,0.5);
 \filldraw[fill=blue!20] (-0.5,-0.5)--(0.5,-0.5)--(0.5,-1)--(-0.5,-1)--(-0.5,-0.5);
 \node at (0,0.7) {$Q^+$};
   \node at (0,-0.7) {$Q^-$};
   %\node (Q) at (0, 1.0) {$a(x,y)=1$};
   %\node (T) at (0, -1.0) {$a(x,y)=1$};
\node (U1) at (0, 2.3) {$v_{\text{top}}=\frac 12$};
   \node (U2) at (0.6, 0) {$v=0$};
    \end{tikzpicture}
    \caption{Riesz potential estimate, 1 saddle point.}
\end{figure}

\begin{proof}
  Since $v=u_C$ on~$\partial\Omega$ we have  at the top, so line~$x_d=1$, it equals to~$\frac 12$ and at the bottom equals to~$-\frac 12$. We denote  these values~$v_{\text{top}}$ and~$v_{\text{bot}}$ respectively and boxes~$Q^{\pm}_l:=\set{-\frac 14 l\le \overline x \le \frac 1 4 l; \pm l \le  x_d \le \pm 2l}$, with~$l\in(-\frac12,\frac 12)$ and~$\overline x\in (-1,1)^{d-1}$ of  the  the size~$l^d$, see Picture~\ref{pic:riesz}.  We first observe that
  \begin{align*}
    1=\frac 12-(-\frac 12)= \abs{v_{\text{top}} -v_{\text{bot}}}\le\abs{v_{\text{top}} -\mean{v}_{Q^+_l}} +
\abs{\mean{v}_{Q^+_l} -\mean{v}_{Q^-_l}} +\abs{\mean{v}_{Q^-_l} -v_{\text{bot}}},  \end{align*}
where~
\begin{align*}
\abs{\mean{v}_{Q^+_l}-\mean{v}_{Q^-_l}}\to 0
\end{align*}
since~$v\in C^0(\overline{\Omega})$.
Now
\begin{align*}
    \abs{v_{\text{top}} -\mean{u}_{Q^+_l}}\lesssim \mathcal{I}^+_1(\nabla v)(0).
\end{align*}
Indeed, by Jensen's inequality we get  
\begin{align*}
    \abs{v_{\text{top}} -\mean{u}_{Q^+_l}} &\lesssim \dashint_{Q^+_l} \int _{K^+} \frac{\abs{\nabla v (x+y)}}{\abs{y}^{d-1}}\, dydx\\
\\&\le \dashint_{Q_l^+} \int_{K^+} \frac{\abs{\nabla v(z)}}{\abs{z-x}^{d-1}}\, dzdx.     
\end{align*}
Here
\begin{align*}
\dashint_{Q_l^+} \frac{1}{\abs{z-x}^{d-1}}\, dz\le \abs{z}^{1-d}
\end{align*}
and therefore
\begin{align*}
    \abs{v_{\text{top}} -\mean{u}_{Q^+_l}} \lesssim \int_{K^+}\frac{\abs{\nabla v(z)}}{\abs{z}^{d-1}}\, dz=\mathcal{I}^+_1v(0).
\end{align*}

Analogously
\begin{align*}
    \abs{v_{\text{bot}} -\mean{u}_{Q^-_l}} &\lesssim \dashint_{Q^-_l} \int _{K^-} \frac{\abs{\nabla v (x+y)}}{\abs{y}^{d-1}}\, dydx\\
    &\lesssim \int_{K^-}\frac{\abs{\nabla v(z)}}{\abs{z}^{d-1}}\, dz=\mathcal{I}^-_1v(0).
\end{align*}
Now  we have
\end{proof}
We have:
\begin{enumerate}
  \item\label{i1} If $s-\frac{d-\frD}{p}<0$, then  $\mathcal{J}^s_p(u_C)<\infty$, so the competitor function~$u$ defined in \eqref{def:com} belongs to the energy space~$W_{u_D}$.
  \item\label{i2} $\mathcal{J}^t_{q,a}(u_C)=0$.
  \item\label{i3}  If $1-\frac{d-\frD}{q}+\frac{\alpha}{q}>0$, then   $\mathcal{J}^1_{q,a}(v)\ge c_0>0$ for all~$v\in C^0(\overline{\Omega})$  with~$v= u_D$ on~$ \Omega^{\complement}$.
 \end{enumerate}
 
 It remains to prove \ref{i3}. We use Lemma~\ref{riesz_local} and integrate over the Cantor necklace
  \begin{align*}
   0&<\frac 12 \int_{\frC} d\mu(x) \lesssim \int_{\frC}\mathcal{I}_1^{\btimes}(\nabla v)d\mu \\
& =\int_{\frC} \int_{\setR^d} \frac{\abs{\nabla v(z)}}{\abs{z}^{d-1}} \indicator_{\btimes} dz d\mu\ :=\mathrm{III},
   \end{align*}
and by the definition of the set~$K$ and Lemma~\ref{lem:cantor-estimates} we obtain
\begin{align*}
 \int_{\frC}  \indicator_{\btimes}d\overline y\lesssim \indicator_{\set{\abs{\overline y}\le\frac 14 (\abs{y_d})}} \abs{y_d}^{-d+1+\frD},
\end{align*}
and therefore
\begin{align*}
    \mathrm{III}\lesssim \int_{\setR^d}   \frac{\abs{\nabla v(x-y)}}{\abs{y}^{d-1-\frD}}\indicator_{\set{\abs{\overline y-\overline z}\le\frac 14 (\abs{y_d})}}dy.
\end{align*}
No by Hölder inequality we get 
\begin{align*}
    \mathrm{III} \lesssim \bigg(\int_\Omega\abs{y_d}^{\alpha}\abs{\nabla v}^q\, dy \bigg)^{\frac {1}{q}} \bigg(\int_\Omega   \abs{y_d}^{(-\alpha q^{-1}+\frD+1-d)q'}\indicator_{\set{\abs{\overline y-\overline z}\le\frac 14 (\abs{y_d})}}\, dy\bigg)^{\frac {1}{q'}}, 
\end{align*}
where the last integral is finite by Corollary~\ref{cor:cor} provided
\begin{align*}
 (\alpha q^{-1}-\frD-1+d)q'>-d+\frD,
\end{align*}
so
\begin{align*}
    1-\frac{d-\frD}{q}+\frac{\alpha}{q}>0.
\end{align*}

 Now, we use the argument with the scaling of the weight coefficient~$a(x)$. Let~$\nu>0$ be a free parameter  and define the energy with the scaled weight~$\nu a$ 
 \begin{align*}
    \mathcal{J}_\nu(v):= \mathcal{J}^s_p(v)+\mathcal{J}^1_{q,\nu a}.
 \end{align*}
Then
 \begin{align*}
    \mathcal{J}(v)\ge \mathcal{J}^1_{q,\nu a}(v)=\nu \mathcal{J}^t_{q,a}(v)\ge \nu c_0>0,
 \end{align*}
since~$v=u_D$ on~$ \Omega^{\complement}$. From the other side choose~$\nu$ such that
 \begin{align*}
    \mathcal{J}^s_p(u_C)\le \nu c_0-1.
 \end{align*}
Then
 
 \begin{align*}
\mathcal{J}^s_p(u_C)\le \nu c_0-1\le \mathcal{J}^s_p(v)-1,
 \end{align*}
so there Lavrentiev phenomenon occurs for sufficiently large value of~$\nu$. 

Combination of properties (a) and (c) gives the sufficient condition on the parameters of the model:

\begin{align*}
    s-\frac{d-\frD}{p}< 1-\frac{d-\frD}{q}+\frac{\alpha}{q},
\end{align*}
and since~$\frD\in (0,1)$ could be chosen arbitrary, that is equivalent to 
\begin{align*}
 q>sp+\alpha.
\end{align*}

\subsection{Supercritical case}
In the mixed  case we can use the estimate from the Lemma~\ref{lem:beh_com_super} for the competitor function to show that the first phase ~$\mathcal{J}^s_p(u_C)<\infty$.

We denote by~$x^+_{l,j}$ and $x^-_{l,j}$ the upper and lower  vertexes of an element of the necklace~$\mathcal{N}$, see the Figure 2.

The next step is to study how the energy~$\mathcal{J}^t_{q,a}$ behaves on  smooth functions. In particular, we will show that for~$v\in C^0(\overline \Omega)$, there holds $\mathcal{J}^t_{q,a}(v)\ge c_0>0$.
Similarly to the nonlocal case we get 
\begin{lemma}\label{lem:superRiesz}
 If~$v\in C^1(\overline{\Omega})$, then
 \begin{align}
  \abs{v(x^+_{l,j})-v(x^-_{l,j})}\lesssim \int_{\mathcal{N}_{l,j}}  \mathcal{I}_1^{\btimes}(\abs{\nabla v})(\overline{z},0) \indicator_{\mathcal{M}_8(y)} dz.
 \end{align}
\end{lemma}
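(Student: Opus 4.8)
The plan is to reproduce the structure of the proof of the nonlocal counterpart of this lemma — a telescoping chain of ball averages running from one tip of the diamond $\mathcal{N}_{l,j}$ to the other — but now to bound the increment between two consecutive averages by a cone–restricted Riesz potential of $\abs{\nabla v}$, exactly as in the one–saddle–point estimate of Lemma~\ref{riesz_local}, rather than by a double integral of $v$.

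First I would reduce to a single scale. The diamond $\mathcal{N}_{l,j}$ is, up to the affine map $T_{l,j}$ obtained by translating its equator onto $\set{x_d=0}$ and dilating by its diameter $\delta_l\eqsim\lambda^l$, a rescaled copy of the unit bicone $\btimes$ of Lemma~\ref{riesz_local}: under $T_{l,j}$ the apexes $x^\pm_{l,j}$ are carried to the top and bottom tips, the cones $K^\pm$ are preserved, and, because of the support of $\nabla u_C$ recorded in \eqref{supergrad}, the part of $\mathcal{N}_{l,j}$ on which $\abs{\nabla v}$ need not vanish lands in $\mathcal{M}_8$. The left-hand side $\abs{v(x^+_{l,j})-v(x^-_{l,j})}$ carries no power of $\delta_l$, while the $1$-Riesz kernel $\abs{y}^{1-d}$ together with its integration element scales as $\delta_l^{1-d}\,\delta_l^{d}=\delta_l$, which matches the length scale of $v\circ T_{l,j}$; hence it suffices to prove the inequality for $l=j=0$, i.e. for one saddle point centred at the origin, with a constant independent of the diamond.

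Then I would run the telescoping. As in the nonlocal case, choose a bi-infinite family of balls $(B_k)_{k\in\mathbb{Z}}$ with $B_k\subset 2B_{k+1}$, $\operatorname{diam} B_k\eqsim 2^{-\abs{k}}$, $B_k\to\set{x^+}$ as $k\to+\infty$ and $B_k\to\set{x^-}$ as $k\to-\infty$, each $B_k$ contained in the region $\set{\abs{\overline y}>\tfrac1{10}\dis(y_d,\frC)}$ of the diamond, with uniformly finite overlap; continuity of $v$ then gives $\abs{v(x^+)-v(x^-)}\le\sum_{k\in\mathbb{Z}}\abs{\mean{v}_{B_{k+1}}-\mean{v}_{B_k}}$. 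For each $k$, a Jensen/Poincaré estimate along the cone joining $B_k$ to $B_{k+1}$ — the computation already used in the proof of Lemma~\ref{riesz_local}, where $\dashint_{Q}\abs{z-x}^{1-d}\,dx\lesssim\abs{z}^{1-d}$ appears — bounds $\abs{\mean{v}_{B_{k+1}}-\mean{v}_{B_k}}$ by the average over the equatorial ball of $B_k$ of $\big(\mathcal{I}^+_1+\mathcal{I}^-_1\big)(\abs{\nabla v})$ on $\set{x_d=0}$, the truncation of the kernel being contained in $\btimes$. Summing over $k$, the finite overlap makes the scales collapse into the single expression $\dashint_{\mathcal{N}_{l,j}}\mathcal{I}_1^{\btimes}(\abs{\nabla v})(\overline z,0)\,dz$, the integrand carrying $\indicator_{\mathcal{M}_8}$ since $v=u_D$ on $\Omega^\complement$ and $\nabla u_C$ lives in the fattened cone around $\frC$.

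The hard part will be the increment step: unlike $\Omega$, whose "top" and "bottom" are whole faces, the tips of a diamond are single points, so the Poincaré chain has to be run against a kernel carrying the full Riesz singularity $\abs{w-x^\pm}^{1-d}$ concentrated at the apexes, and one must check that summing the per-scale bounds loses no factor growing with the generation $l$ — which is precisely what the rescaling of the first step is designed to neutralise. A lighter bookkeeping point is to verify that the cone used in each Poincaré step, after intersection with $\mathcal{N}_{l,j}$, genuinely fits inside $(\overline z,0)+\btimes$ and inside $\mathcal{M}_8$, so that the restricted potential $\mathcal{I}_1^{\btimes}$ (rather than a larger one) suffices.
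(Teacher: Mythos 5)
The paper provides no proof of this lemma --- it appears under the remark ``Similarly to the nonlocal case we get'' and is then used directly --- so there is nothing to compare your argument against. Your reconstruction (telescope ball averages from tip to tip inside the diamond, bounding each increment by a cone Riesz potential as in the one-saddle-point estimate, rescale to neutralise the generation $l$) is the natural reading of what the paper is pointing at, and it mirrors the proof of the nonlocal version of this lemma together with the one-saddle estimate.

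However, the step you label a ``lighter bookkeeping point'' is in fact where the argument breaks. The Poincar\'e bound for an increment across a ball $2B_k$ of radius $r_k$ near a tip carries the factor $r_k^{1-d}$, which blows up as $r_k\to0$; the Riesz kernel $|y-(\overline z,0)|^{1-d}$, with $(\overline z,0)$ on the diamond's equator, sees that same ball with the frozen factor $\delta_l^{1-d}$, because every equatorial base point lies at distance $\eqsim\delta_l$ from the tips. Since $r_k^{1-d}\gg\delta_l^{1-d}$ once $r_k\ll\delta_l$, the per-scale bounds are not dominated by $\mathcal{I}_1^{\btimes}$, and integrating over base points $z\in\mathcal{N}_{l,j}$ does not fix it --- the mismatch is uniform over bases. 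This is precisely the mechanism the nonlocal proof does not face, because there the weight $(|\overline y|+|\overline z|)^{-2d}$ rescales with $r_k$ at every generation. Relatedly, you arrived at $\dashint_{\mathcal{N}_{l,j}}$ rather than the stated $\int_{\mathcal{N}_{l,j}}$, and in fact the stated inequality with plain Lebesgue measure is dimensionally false: for $v$ smooth with $|\nabla v|\lesssim M$ the left side is $\eqsim M\delta_l$ while the right side is $\eqsim M\delta_l^{d}$, which is strictly smaller once $\delta_l<1$ and $d\ge 2$. A correct and usable statement must carry a compensating weight inside the $z$-integral --- the local counterpart of the $(|\overline y|+|\overline z|)^{-2d}$ weight, for instance $|\overline z|^{-d}\indicator_{\mathcal{M}_8}(z)$ --- and the telescoping must be set up so that it actually produces that weight, accounting for the tip scales at the correct rate.
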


This Riesz-type estimate and the scaling argument as in the Section 4 allows us to obtain the condition for the energy gap:
\begin{align*}
    \frac{p}{p-1}\bigg(s-\frac{d}{p}\bigg)<\frac{q}{q-1}\bigg(1-\frac{d+\alpha}{q}\bigg)
\end{align*}

\begin{remark}
 
It has been shown by \textcite{BuyLeeSong23} that, if~$1<p\le q<\infty$,~$0<s$,~$t=1$ and~$q\le sp+\alpha$, then every weak solution of the mixed nonlocal-local double phase problem~\textbf{II} in the subcritical case~$s-\frac d p<0$ is locally Hölder continuous. Thus, we can compare our condition 
for the Lavrentiev gap with  the one  for the absence of the Lavrentiev gap. This shows that our condition~$q>sp+\alpha$ is sharp in the subcritical case.
\end{remark}

\begin{remark}
 To  construct example of Lavrentiev phenomenon for the local-nonlocal  mixed model \textbf{III}, so for the case  when the first phase~$\mathcal{J}_{p}^s(v)$ is local  and~$\mathcal{J}_{q,a}^t(v)$   is  nonlocal, we use for both super and subcritical cases  the local  competitor function constructed in~\cite{BalDieSur20} and the Riesz potential estimates from the Section 4. 

\end{remark}

\bigskip
The author express her gratitude to Lars Diening and Moritz Kaßmann  for  the fruitful  discussions.

\printbibliography

\end{document}